\documentclass[a4paper]{article}
\usepackage[utf8]{inputenc}
\usepackage{amsmath}
\usepackage{amssymb}
\usepackage{amsthm}
\usepackage{authblk} 
\usepackage{color}
\usepackage{graphicx}
\usepackage{physics}
\usepackage{tikz-cd}
\usepackage{comment}
\usepackage{soul} 
\usepackage[colorinlistoftodos]{todonotes}
\usepackage[colorlinks=true, allcolors=blue]{hyperref}
\usepackage{subfigure}
\usepackage{subcaption}

\usepackage{mathrsfs}

\newtheorem{theorem}{Theorem}[section]
\newtheorem{lemma}[theorem]{Lemma}
\newtheorem{proposition}[theorem]{Proposition}

\newtheorem{problem}[theorem]{Problem}

\newtheorem{innertheorem}{Theorem}
\newenvironment{theoremA}[1]{%
  \renewcommand\theinnertheorem{#1}%
  \innertheorem
}{%
  \endinnertheorem
}

\def\cc{\mathcal}

\DeclareMathOperator{\cA}{{\mathcal A}}

\renewcommand{\min}{min}

\usepackage[a4paper,top=3cm,bottom=2cm,left=3cm,right=3cm,marginparwidth=1.75cm]{geometry}

\title{On C*-nuclearity of graph operator systems}
\author[1]{Adam Dor-On}
\author[2]{Travis Russell}
\affil[1]{University of Haifa}
\affil[2]{Texas Christian University}
\date{}

\usepackage{natbib}
\usepackage{graphicx}

\begin{document}

\maketitle

\begin{abstract}
We classify the set of finite simple undirected graphs whose corresponding graph operator system is C*-nuclear. We prove that the operator system associated to the graph is C*-nuclear if and only if the graph is 2-chordal, as defined by McCullough. In particular, we present the first known example of a non-chordal graph whose graph operator system is C*-nuclear, as well as the first known examples of graph operator systems which are not C*-nuclear.
\end{abstract}

\section{Introduction} \label{sec: Intro}

The \textit{positive matrix completion problem} asks whether or not a partially defined complex matrix can be completed to a positive definite matrix. A partially defined $n \times n$ matrix is associated with a graph $G=(V,E)$ where $V = \{1,2,\dots,n\}$ and $(i,j) \in E$ whenever the $(i,j)$-entry of the matrix is defined. If a partially defined matrix has a positive definite completion, then every completely defined principle submatrix must be positive definite. In \cite{Grone84}, it was shown that for a fixed graph $G$, every associated partially defined matrix with positive diagonal entries and positive definite complete submatrices has a positive definite completion if and only if $G$ is chordal. This result was generalized to partially-defined operator-valued matrices in \cite{PPS89} and to partially-defined C*-algebra-valued matrices in \cite{PaulsenRodman}.

The main result of this paper can be recast as a relative version of the C*-algebra-valued matrix completion problem of \cite{PaulsenRodman}. Let $\mathcal{A}$ be a C*-algebra and $G=(V,E)$ a graph. A partially defined matrix $(a_{i,j})_{(i,j) \in E}$ with entries in a C*-algebra $\mathcal{A}$ is \text{$\mathcal{A}$-completable} if it has an $\mathcal{A}$-valued positive definite completion $(a_{ij}) \in M_n(\mathcal{A})$. In this paper, we will identify precisely those graphs $G=(V,E)$ such that for every C*-algebra $\mathcal{A} \subseteq B(\mathcal{H})$, if the partially defined matrix $(a_{i,j})_{(i,j) \in E}$ has entries in $\mathcal{A}$ and is $B(\mathcal{H})$-completable, then it is $\mathcal{A}$-completable. Our main result is the following theorem:

\begin{theoremA}{A} \label{thm: main result intro}
A graph $G=(V,E)$ is 2-chordal (in the sense of \cite{McCullough93}) if and only if it has the property that for every C*-algebra $\mathcal{A} \subseteq B(\mathcal{H})$, every partially defined $\mathcal{A}$-valued matrix $(a_{i,j})_{(i,j) \in E}$ which is $B(\mathcal{H})$-completable is also $\mathcal{A}$-completable.
\end{theoremA}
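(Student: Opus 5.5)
The plan is to translate the completion property into a statement about tensor products of an operator system attached to $G$, and then prove the two implications separately. Let $\mathcal{S}_G=\operatorname{span}\{E_{ij}: i=j \text{ or } (i,j)\in E\}\subseteq M_n$ be the graph operator system. Its operator system dual is the quotient $M_n/J_G$, where $J_G$ is the span of the $E_{ij}$ with $(i,j)\notin E$, $i\neq j$. A partially defined $\mathcal{A}$-valued matrix $(a_{ij})_{(i,j)\in E}$ is the same thing as an element of $(M_n/J_G)\otimes\mathcal{A}$.

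\textbf{Step 1: reformulation.} I will show two identifications, modulo the usual strict-positivity/archimedeanization bookkeeping.
\begin{itemize}
\item[(a)] The matrix is $\mathcal{A}$-completable if and only if it is positive in $(M_n/J_G)\otimes_{\max}\mathcal{A}$. This uses that the maximal tensor product commutes with operator system quotients: $(M_n\otimes_{\max}\mathcal{A})/(J_G\otimes\mathcal{A})\cong (M_n/J_G)\otimes_{\max}\mathcal{A}$, together with $M_n\otimes_{\max}\mathcal{A}=M_n(\mathcal{A})$.
\item[(b)] The matrix is $B(\mathcal{H})$-completable if and only if it is positive in $(M_n/J_G)\otimes_{\min}\mathcal{A}$. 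For this I use injectivity of $\otimes_{\min}$, which gives $(M_n/J_G)\otimes_{\min}\mathcal{A}\subseteq (M_n/J_G)\otimes_{\min}B(\mathcal{H})$. I then need that for $B(\mathcal{H})$ the min-positivity of the quotient coincides with liftability to a positive element of $M_n(B(\mathcal{H}))$. This is the Paulsen--Power--Smith operator-valued completion circle of ideas, obtained by duality with $\mathcal{S}_G$ and Arveson extension.
\end{itemize}
Consequently the property in Theorem~\ref{thm: main result intro} is equivalent to $(M_n/J_G)\otimes_{\min}\mathcal{A}=(M_n/J_G)\otimes_{\max}\mathcal{A}$ for all $\mathcal{A}$, that is, C*-nuclearity of the dual operator system. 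I expect, and would check, that this is in turn equivalent to C*-nuclearity of $\mathcal{S}_G$ as in the abstract, via finite-dimensional duality.

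\textbf{Step 2: 2-chordal implies the property.} Here I use McCullough's structural description of 2-chordal graphs. Roughly, they are built by gluing chordal pieces and cycle-type pieces along cliques. The argument then has three parts.
\begin{itemize}
\item[(i)] If $G$ is chordal, the Paulsen--Rodman C*-algebra-valued completion theorem handles it directly: every partial matrix whose fully specified blocks are positive in $M_k(\mathcal{A})$ is $\mathcal{A}$-completable, and $B(\mathcal{H})$-completability forces those blocks to be positive.
\item[(ii)] For a clique-sum $G=G_1\cup_K G_2$, I complete on $G_1$ and on $G_2$ separately inside $\mathcal{A}$. I then glue the two completions along the common clique $K$ by the standard Schur-complement/one-step extension argument. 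That argument only uses C*-algebra operations such as square roots and contractions $a^{1/2}cb^{1/2}$, so it stays inside $\mathcal{A}$.
\item[(iii)] The new ingredient is the basic non-chordal 2-chordal building block, such as the minimal cycle pieces allowed by McCullough's definition. For this block I must show by hand that $B(\mathcal{H})$-completable implies $\mathcal{A}$-completable. I would do this by exhibiting an explicit completion formula built from the data by functional calculus, mirroring McCullough's proof that such graphs have the relevant extension property.
\end{itemize}

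\textbf{Step 3: failure of 2-chordality implies failure of the property.} First, the property passes to induced subgraphs: pad a partial matrix on an induced subgraph $H$ with identity diagonal entries and zero off-diagonal entries, and compress back. Hence it suffices to find, for each minimal non-2-chordal graph in McCullough's obstruction list (long cycles, and so on), a C*-algebra $\mathcal{A}\subseteq B(\mathcal{H})$ and an $\mathcal{A}$-valued partial matrix that is $B(\mathcal{H})$-completable but not $\mathcal{A}$-completable. Natural candidates are the universal unital C*-algebra generated by contractions assigned to the edges (a free-product / $C^*(\mathbb{F}_k)$-type algebra) or a suitable non-exact algebra. The point is that completing in $B(\mathcal{H})$ requires an auxiliary operator that cannot be chosen inside $\mathcal{A}$, which I would detect via a state or a character obstruction.

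I expect this construction of counterexamples to be the main obstacle. The crux is to separate the min and max tensor positivity on $M_n/J_G$ for these specific graphs. My first attempt would use McCullough's non-2-chordal witnesses, namely operator-valued partial matrices with no completion in a restricted class, and realize them inside a universal C*-algebra where the required completing entry provably does not exist.
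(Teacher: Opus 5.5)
Your Step 1 is a sound reformulation. It matches what the paper's $C_4$ argument actually verifies: approximate $\mathcal{A}$-completability of the data $P$ means $P\in\overline{M_n(\mathcal{A})^+ + J_G\otimes\mathcal{A}}$, i.e.\ max-positivity in $(M_n/J_G)\otimes\mathcal{A}$. Parts (i) and (ii) of Step 2 are also fine. The gap is that both hard steps are left open.

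\textbf{Step 2(iii).} You give no argument for the non-chordal building blocks, and the plan as stated would not cover them. Part 3 of Theorem~\ref{thm: McCullough} says a 2-chordal graph is a clique sum of induced subgraphs of the blow-ups $W_4^{(n)}$ of the wheel $K_1+C_4$, not of ``chordal and cycle-type pieces''. So you need the base case $C_4$, together with permanence under three operations:
\begin{enumerate}
\item joining with a clique (a Schur-complement reduction, as in Proposition~\ref{prop: G+K_n C*-nuclear});
\item clique blow-up (a $G^{(m)}$-partial matrix over $\mathcal{A}$ is just a $G$-partial matrix over $M_m(\mathcal{A})$);
\item vertex deletion.
\end{enumerate}
For $C_4$, an explicit functional-calculus completion formula has no support. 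McCullough's results concern ranks of extreme rays of $\mathcal{P}_G$, not C*-valued completions, and nothing in your sketch explains why $C_4$ should work while $C_5$, where the property fails, does not.

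The paper's argument is non-constructive. It proves $P\in\overline{M_4(\mathcal{A})^+ +\mathcal{J}}$ by Hahn--Banach. For a positive functional $F$ vanishing on the chord entries, the GNS data satisfy $\mathcal{H}_0\perp\mathcal{H}_2$ and $\mathcal{H}_1\perp\mathcal{H}_3$. The reflections $1-2p_2$ and $1-2p_3$ lie in $\pi(\mathcal{A})'$ and generate an amenable (dihedral) group acting on the compact convex set of $B(\mathcal{H})$-completions. A fixed point is a completion whose chord entries contribute nothing to $F$, whence $F(P)\geq 0$.

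\textbf{Step 3.} This step contains no counterexample, and the obstruction you suggest cannot work. A character, or any UCP map $\mathcal{A}\to M_k$ applied entrywise, never witnesses non-completability. By Arveson's theorem it extends to a UCP map on $B(\mathcal{H})$, and applying the extension to a $B(\mathcal{H})$-completion gives a completion of the image data. A separating state on $M_n(\mathcal{A})$ always exists by Hahn--Banach, so appealing to one only restates the problem. Moreover, if $\mathcal{A}$ is injective (e.g.\ $B(\mathcal{K})$, which is not exact), a conditional expectation $B(\mathcal{H})\to\mathcal{A}$ carries $B(\mathcal{H})$-completions into $\mathcal{A}$, and an approximate version holds whenever $\mathcal{A}$ has the WEP. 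So non-exactness is the wrong property, and for $C^*(\mathbb{F}_k)$ the WEP question is Kirchberg's reformulation of Connes' embedding problem.

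The missing idea is to encode a failure of the WEP directly. The paper takes a non-injective von Neumann algebra $\mathcal{N}$ and invokes Kavruk's theorem that the WEP is equivalent to complete $(2,3)$-tight Riesz interpolation. It then builds $M_2(\mathcal{N})$-valued partial matrices on $K_{2,3}$ and $2K_1+P_4$ with two properties:
\begin{itemize}
\item they are $B(\mathcal{H})$-completable, by filling the $(v,w)$ entry with $2b\oplus 0$;
\item any $M_2(\mathcal{N})$-valued completion with $(v,w)$ entry $U$ yields the forbidden interpolant $a=\frac14(u+u^*)$, where $u=eUe$.
\end{itemize}
For $K_{2,2,2}$, the paper exhibits Kavruk's non-C*-nuclear system $\mathcal{W}_{2,3}$ as a retract of $\mathcal{S}_{K_{2,2,2}}$. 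To use this in your dual formulation, you would also need the identification with C*-nuclearity of $\mathcal{S}_G$ that you left unchecked; the paper takes it from \cite{AKRSx}.

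Finally, with only induced-subgraph permanence you would need separate counterexamples for every $C_n$ with $n\geq 5$, and for $\overline{C_6}$ and $\overline{P_6}$. The paper instead proves permanence under clique blow-ups and edge contractions, and reduces all of these to $2K_1+P_4$.
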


The relative positive completion problem above is motivated by the tensor theory of operator systems developed in \cite{KPTT1} and \cite{KPTT2} by Kavruk, Paulsen, Todorov and Tomforde. An operator system $\mathcal{S}$ is said to be \textit{nuclear} if $\mathcal{S} \otimes_{min} \mathcal{T}$ is completely order isomorphic to $\mathcal{S} \otimes_{max} \mathcal{T}$ for every operator system $\mathcal{T}$, where $\otimes_{min}$ and $\otimes_{max}$ denote the minimal and maximal operator system tensor product structures. An operator system is called \textit{C*-nuclear} if $\mathcal{S} \otimes_{min} \mathcal{A}$ is completely order isomorphic to $\mathcal{S} \otimes_{max} \mathcal{A}$ for every C*-algebra $\mathcal{A}$. In \cite{KPTT1}, a C*-nuclear yet non-nuclear example of an operator system was provided. This example comes from the class of \textit{graph operator systems} associated to simple undirected graphs. Given a simple finite undirected graph $G$ with $n$ vertices, the associated graph operator system $\mathcal{S}_G \subseteq M_n$ is the linear span of the set of matrix units $E_{ij}$ such that $(i,j)$ is an edge of $G$. It was shown in \cite{AKRSx} that a graph operator system $\mathcal{S}_G$ is C*-nuclear if and only if the relative C*-algebra-valued completion problem above has a positive solution for the graph $G$. Since partially defined C*-algebra-valued matrices associated to chordal graphs do have C*-algebra-valued completions by \cite{PaulsenRodman}, every chordal graph is C*-nuclear, as shown in \cite{KPTT1}. On the other hand, Han and Paulsen show in \cite{HP} that a finite-dimensional operator system is nucelar if and only if it is completely order isomorphic to a C*-algebra. Hence the seven-dimensional operator system associated to the three-vertex path $P_3$ is C*-nuclear (since $P_3$ is chordal) but not nuclear since it is not isomorphic to any seven-dimensional C*-algebra.

The main result of this paper, Theorem \ref{thm: main result intro}, is a complete classification of those graphs $G$ for which the associated operator system $\mathcal{S}_G$ is C*-nuclear. Although every graph operator system associated to a chordal graph is known to be C*-nuclear, no examples have appeared in the literature of non-C*-nuclear graph operator systems or of C*-nuclear graph operator systems associated to non-chordal graphs (c.f. \cite[Question 3.7.11]{GuptaEtAl}). We provide the first example in the literature of a non-chordal graph whose graph operator system is C*-nuclear: 
\begin{theoremA}{B} \label{intro thm: c_4 C*-nucl}
    The graph operator system $\mathcal{S}_{C_4}$ associated to the 4-cycle $C_4$ is C*-nuclear.
\end{theoremA}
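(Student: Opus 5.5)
By the criterion of \cite{AKRSx} recalled above, $\mathcal{S}_{C_4}$ is C*-nuclear exactly when the relative completion problem for $C_4$ always has a solution. So we fix a C*-algebra $\mathcal{A}\subseteq B(\mathcal{H})$ and a partially defined $\mathcal{A}$-valued matrix on $C_4$. Label the vertices $1,2,3,4$ with edges $12,23,34,41$, so the unspecified entries are $(1,3)$ and $(2,4)$. We assume the partial matrix has a positive (definite, after adding $\epsilon I$ and using closedness of the positive cones) completion $X\in M_4(B(\mathcal{H}))$, and we must produce such a completion with all entries in $\mathcal{A}$. The plan is to fill one missing entry at a time. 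Filling one entry of the 4-cycle yields a chordal graph: $C_4$ plus the chord $13$ is two triangles glued along an edge. Once a single chord is filled with an element of $\mathcal{A}$ while keeping $B(\mathcal{H})$-completability, the chordal Paulsen--Rodman result \cite{PaulsenRodman} finishes the proof. Indeed, the two triangles $\{1,2,3\}$ and $\{1,3,4\}$ are then fully specified positive blocks in $M_3(\mathcal{A})$, and the chordal theorem completes the remaining entry inside $\mathcal{A}$.

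So the whole problem reduces to the following step. Given $X$ positive definite, find $c\in\mathcal{A}$ such that both $3\times 3$ matrices
\[
\begin{pmatrix} a_{11}&a_{12}&c\\ a_{21}&a_{22}&a_{23}\\ c^*&a_{32}&a_{33}\end{pmatrix},\qquad \begin{pmatrix} a_{11}&c&a_{14}\\ c^*&a_{33}&a_{34}\\ a_{41}&a_{43}&a_{44}\end{pmatrix}
\]
are positive definite. For the strictly positive case I would normalize: conjugating by $\operatorname{diag}(a_{ii}^{-1/2})$ (which lie in $\mathcal{A}$) makes all diagonal entries $1$, and the off-diagonal edge entries become strict contractions in $\mathcal{A}$. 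Positivity of each $3\times3$ block is then parametrized, by the standard Schur-complement/Parrott description, as $c = a_{12}a_{23} + D_{a_{12}^*}\,K\,D_{a_{23}}$, where $D_T=(1-T^*T)^{1/2}$ and $K$ is a contraction. A parallel description holds for the other triangle with centre $a_{14}a_{43}$ and defect operators of $a_{14},a_{43}$. We therefore need $c\in\mathcal{A}$ lying in the intersection of two ``operator balls''. The existence of $X$ shows that this intersection contains the element $x_{13}\in B(\mathcal{H})$, with strict inequality.

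The main obstacle is passing from this nonempty intersection in $B(\mathcal{H})$ to a point in $\mathcal{A}$. The centres, the defect operators, and their inverses all lie in $\mathcal{A}$, because the defect operators are invertible in the strictly contractive case, so each ball is of the form $\{C + L K R : \|K\|<1\}$ with $C,L,R\in\mathcal{A}$ and $L,R$ invertible. The condition is therefore $\|L_1^{-1}(c-C_1)R_1^{-1}\|<1$ and $\|L_2^{-1}(c-C_2)R_2^{-1}\|<1$, and the relevant set is open and convex in norm. Nonemptiness in $B(\mathcal{H})$ does not immediately give a point of $\mathcal{A}$. First I would try an explicit algebraic choice, such as a suitable convex combination $c = C_1 + L_1 K_0 R_1$ with $K_0$ built from $\mathcal{A}$-elements. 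Failing that, I would use a Hahn--Banach/duality argument. If the open convex set $U\cap\mathcal{A}$ were empty, separating it from $\mathcal{A}$ gives a functional on $B(\mathcal{H})$. Using the positivity structure, one would hope to turn this into a completely positive map showing that no $B(\mathcal{H})$-completion exists, contradicting the existence of $X$. I expect this to require genuine use of the four-cycle structure, namely the $2$-chordality. It is also where the argument could fail, since for general non-chordal graphs the statement is expected to be false.
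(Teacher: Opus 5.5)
You have a genuine gap. Your reduction to a single chord is correct, but it does not reduce the difficulty at all. If $c\in\mathcal{A}$ makes both triangles positive definite, then Lemma~\ref{lem: matrix completion and schur}(2) fills the $(2,4)$ entry inside $\mathcal{A}$. Conversely, the $(1,3)$ entry of any $\mathcal{A}$-valued positive definite completion is such a $c$. So the claim that the two operator balls $\{C_k+L_kKR_k:\|K\|<1\}$, $k=1,2$, which meet in $B(\mathcal{H})$, must also meet in $\mathcal{A}$ is just a restatement of the theorem. That is exactly the step you leave open, and it is the whole content of the theorem.

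Neither of your suggested remedies works as stated. You give no concrete ``explicit algebraic choice'', and there is no evident candidate: you cannot compress $x_{13}$ into $\mathcal{A}$, since in general there is no conditional expectation $B(\mathcal{H})\to\mathcal{A}$. Your duality is also set up in the wrong place. Separating the open convex set $U\subseteq B(\mathcal{H})$ from the subspace $\mathcal{A}$ gives a functional on $B(\mathcal{H})$ that vanishes on $\mathcal{A}$ and carries no positivity. Since $U\neq\emptyset$ by hypothesis, nothing about it can contradict the existence of your completion $X$.

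The missing idea, which is how the paper proceeds, is to dualize inside $M_4(\mathcal{A})$ and handle both chords at once. Use the labeling $0,1,2,3$ with chords $\{0,2\},\{1,3\}$. Let $P$ be the partial matrix with zeros at the chords, and let $\mathcal{J}$ be the self-adjoint matrices supported on the chords. It suffices to show $P\in\overline{M_4(\mathcal{A})^++\mathcal{J}}$: if $\|P-Q-Z\|<\varepsilon$ with $Q\geq 0$ and $Z\in\mathcal{J}$, then $P-Z$ is an $\mathcal{A}$-valued completion with $P-Z+\varepsilon I\geq 0$.

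By Hahn--Banach, you must check $F(P)\geq 0$ for every positive functional $F$ on $M_4(\mathcal{A})$ with $F(\mathcal{J})=0$. Write $F([b_{ij}])=\sum_{i,j}\langle\pi(b_{ij})\xi_j,\xi_i\rangle$ via GNS. Annihilation of $\mathcal{J}$ gives $\overline{\pi(\mathcal{A})\xi_0}\perp\overline{\pi(\mathcal{A})\xi_2}$ and $\overline{\pi(\mathcal{A})\xi_1}\perp\overline{\pi(\mathcal{A})\xi_3}$. Hence the reflections $u=1-2p_2$ and $v=1-2p_3$, with $p_i$ the projection onto $\overline{\pi(\mathcal{A})\xi_i}$, lie in $\pi(\mathcal{A})'$. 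Moreover $u$ fixes $\xi_0$ and negates $\xi_2$, while $v$ fixes $\xi_1$ and negates $\xi_3$.

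Now take a positive $B(\mathcal{H}_\pi)$-completion of $\pi(P)$, which exists by Arveson's extension theorem. Let $\langle u,v\rangle$ act by entrywise conjugation on the ultraweakly compact convex set of positive completions. This group is a quotient of $\mathbb{Z}_2*\mathbb{Z}_2$, hence amenable, so there is a fixed completion $C$. Invariance forces $\langle c_{02}\xi_2,\xi_0\rangle=\langle c_{13}\xi_3,\xi_1\rangle=0$, so $F(P)=\langle C\xi,\xi\rangle\geq 0$.

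This is where the four-cycle is genuinely used. Two disjoint chords give two involutions and an amenable group. With more non-edges, as in $K_{2,3}$, one only gets quotients of larger free products of copies of $\mathbb{Z}_2$, which need not be amenable.
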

\noindent From this result, we are able to conclude that every 2-chordal graph is C*-nuclear. This implication follows from a characterization of 2-chordal graphs due to Laurent \cite{Laurent01} (refining prior work of McCullough \cite{McCullough93}). Laurent's characterization shows that every 2-chordal graph can be constructed from the 4-cycle $C_4$ by a sequence of elementary graph operations. We show that each of these operations preserves the C*-nuclearity of the associated graph operator system:
\begin{theoremA}{C} \label{intro thm: permanance}
    Suppose $G$ is a graph and $\mathcal{S}_G$ is C*-nuclear. Then $\mathcal{S}_H$ is C*-nuclear whenever $H$ is obatined from $G$ by a sequence of any of the following graph operations:
    \begin{enumerate}
        \item Joining with a complete graph.
        \item Taking a clique sum with a graph $G'$ for which $\mathcal{S}_{G'}$ is C*-nuclear.
        \item Replacing vertices with cliques (clique blow-up).
        \item Deleting vertices.
        \item Contracting edges.
    \end{enumerate}
\end{theoremA}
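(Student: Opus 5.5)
Since the property is preserved under composing operations, it suffices to treat each of the five operations separately. Throughout, I will use the reformulation from \cite{AKRSx}: $\mathcal{S}_G$ is C*-nuclear if and only if, for every C*-algebra $\mathcal{A}\subseteq B(\mathcal{H})$, every partially defined $\mathcal{A}$-valued $G$-matrix that is $B(\mathcal{H})$-completable is also $\mathcal{A}$-completable. I will work with positive semidefinite completions, reducing to the definite case by adding $\epsilon I$ and using compactness or closedness where necessary.

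\textbf{Deletion of vertices.} Let $H=G\setminus\{v\}$ be an induced subgraph and let $A$ be a $B(\mathcal{H})$-completable $H$-pattern. Take a $B(\mathcal{H})$-completion, border it by a zero row and column at $v$ (with the diagonal entry $1$, or $0$ in the semidefinite setting), and restrict to the edges of $G$. This gives a $B(\mathcal{H})$-completable $G$-pattern with entries in $\mathcal{A}$. Complete it in $\mathcal{A}$, then compress to $V\setminus\{v\}$. Compression preserves positivity and the specified entries.

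\textbf{Clique blow-up and join with $K_m$.} Replacing $v$ by a clique $K$ is the most delicate case. Given an $H$-pattern, its $K\times K$ block is fully specified and, together with the rows to $N(v)$, forms an operator block. The plan is to regard $M_{|K|}(\mathcal{A})$ as a new C*-algebra, or to amplify the whole problem to $M_k(\mathcal{A})\subseteq B(\mathcal{H}^k)$ with $k=|K|$. Group the vertices of $K$ into one super-vertex and pad every other vertex by tensoring with $I_k$, or embed its entries as the first row or column of a $k\times k$ block. A completable blown-up pattern then becomes a completable $G$-pattern over $M_k(\mathcal{A})$. C*-nuclearity is stated for all C*-algebras, so this pattern has an $M_k(\mathcal{A})$-completion, and un-padding by compression gives the required $\mathcal{A}$-completion of the original pattern. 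The padding must keep the $G$-pattern $B(\mathcal{H}^k)$-completable, and I expect to arrange this by taking block-diagonal amplifications $a_{ij}\otimes E_{11}$ for the non-clique vertices.

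Joining with $K_m$ is clique blow-up applied to a new universal vertex. I would first check directly that adding one universal vertex $u$ preserves nuclearity. Given a completion $P\geq 0$, the row of $u$ is fully specified. A Schur complement with respect to the $(u,u)$ entry, after the $\epsilon$-perturbation, reduces the problem to a $G$-pattern $a_{ij}-a_{iu}a_{uu}^{-1}a_{uj}$. This pattern is $B(\mathcal{H})$-completable with entries still in $\mathcal{A}$, so it has an $\mathcal{A}$-completion, and adding back the rank-one-type term gives the full completion.

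\textbf{Clique sum and edge contraction.} For the clique sum $G_1\cup G_2$ along a clique $C$, the two restricted patterns are each completable, so each has an $\mathcal{A}$-completion $P_1$ and $P_2$ that agree on $C$ because $C$ is fully specified. I would glue them with the standard chordal-gluing formula of \cite{PaulsenRodman}: the $(V_1\setminus C)\times(V_2\setminus C)$ block is $X_1 P_C^{-1}X_2$ (after the $\epsilon$-perturbation), which lies in $\mathcal{A}$. Edge contraction $G/e$ with $e=uv$ is the main obstacle. My plan is to realize it through the earlier operations. $G/e$ is an induced subgraph of the graph obtained by blowing $w$ up to $\{u,v\}$, where $u$ and $v$ receive the union neighbourhood. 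I would compare that graph with $G$: given a $G/e$-pattern, duplicate the $w$ row into both $u$ and $v$, which gives a completable $G$-pattern because a duplicated completion is positive. Complete it in $\mathcal{A}$, then average the $u$ and $v$ coordinates, that is, compress by $(e_u+e_v)/\sqrt2$. This recovers the entries on $G/e$ and yields positivity. Checking that this compression returns exactly the specified entries is the step I would verify most carefully.
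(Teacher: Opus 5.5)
Your arguments for vertex deletion, clique sums, and the join (a Schur complement at one universal vertex, then blow-up) are correct and essentially the paper's. Your clique blow-up takes a different but valid route. You apply the completion criterion over the C*-algebra $M_k(\mathcal{A})\subseteq B(\mathcal{H}^k)$, with $W$ the isometry placing each non-clique coordinate in the first slot of $\mathcal{H}^k$. If $C$ completes $A+\epsilon I$, then $WCW^*+\epsilon(I-WW^*)$ completes the padded pattern plus $\epsilon I$, and $W^*DW$ returns exactly the original entries. This blows up a single vertex to any clique size directly. The paper instead identifies $\mathcal{S}_{H^{(m)}}\cong M_m(\mathcal{S}_H)$, uses tensor identities to blow up all vertices at once, and then deletes vertices.

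The genuine gap is edge contraction, at exactly the step you flagged. First, a normalization slip: with all four entries of the $\{u,v\}$ corner equal to $p_{ww}$, compressing by $(e_u+e_v)/\sqrt2$ gives $2p_{ww}$ on the diagonal. You want $\tfrac12(e_u+e_v)$, or the paper's splitting with $\tfrac12 p_{ww}$ and $\tfrac{1}{\sqrt2}p_{wj}$.

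More seriously, suppose $j$ is adjacent in $G$ to $u$ but not to $v$. Then $q_{vj}$ is an unspecified entry chosen freely by the $\mathcal{A}$-completion $Q$, so $\tfrac12(q_{uj}+q_{vj})$ need not equal $p_{wj}$, and nothing in your proposal controls it. The missing idea is that the specified corner is degenerate in the direction $e_u-e_v$, so positivity forces rows $u$ and $v$ of $Q$ to agree. The criterion only supplies $Q\in M_n(\mathcal{A})$ with $Q+\epsilon I\ge 0$; compactness gives exact completions only in $\mathcal{A}''$, not in $\mathcal{A}$. So you need the quantitative form:
\begin{itemize}
\item Compressing $Q+\epsilon I$ to $\operatorname{span}\{e_u-e_v,e_j\}$ gives a positive $2\times 2$ matrix with $(1,1)$-entry $2\epsilon$ and off-diagonal entry $q_{uj}-q_{vj}$. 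Hence $\|q_{uj}-q_{vj}\|\le\sqrt{2\epsilon(\|p_{jj}\|+\epsilon)}$.
\item After compressing, the finitely many wrong entries are off by $O(\sqrt\epsilon)$. Replace them by the correct ones via a self-adjoint perturbation $E_\epsilon$ with $\|E_\epsilon\|\to 0$, and absorb it into the slack $(\epsilon+\|E_\epsilon\|)I$.
\end{itemize}
This is what the paper does.

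Also, your remark that $G/e$ is an induced subgraph of the graph with $w$ blown up to $\{u,v\}$ does not reduce contraction to the earlier operations. $G$ is only a spanning subgraph of that graph, and adding edges does not preserve C*-nuclearity: $P_5$ is chordal, while adding one edge gives $C_5$.
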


To prove the other direction of our main result, we show that whenever $G$ is not 2-chordal, the associated operator system $\mathcal{S}_G$ is not C*-nuclear. The family of 2-chordal graphs is defined by McCullough as those graphs which do not contain any of the six ``forbidden subgraphs'' depicted in Figure \ref{Fig: forbidden subgraphs} as an induced subgraph (i.e. a graph obtained by deleting vertices). Since C*-nuclearity is preserved by vertex deletion, Theorem \ref{intro thm: permanance} implies that it is enough to show each of these six forbidden subgraphs are associated to graph operator systems which are not C*-nuclear. We first show that two of these graphs, $K_{2,3}$ and $2K_1 + P_4$, are associated to non-C*-nuclear graph operator systems by leveraging a result of Kavruk \cite{Kavrukw}: a von Neumann algebra $\mathcal{N} \subseteq B(\mathcal{H})$ is injective if and only if for each integer $d$, whenever $x_1, x_2, y_1, y_2, y_3 \in M_d(\mathcal{N})$ satisfy $x_i < b < y_j$ for all indices $i$ and $j$ and an element $b\in M_d(B(\mathcal{H}))$, then there exists a ``Riesz interpolant'' $a \in M_d(\mathcal{N})$ such that $x_i < a < y_j$ for all indices $i$ and $j$. Using this property, we construct partially defined matrices over a non-injective von Neumann algebra $\mathcal{N}$ for which the existence of a $\mathcal{N}$-valued positive completion implies the existence of a Riesz interpolant. Since non-injectivity of $\mathcal{N}$ implies the existence of $x_1, x_2, y_1, y_2, y_3 \in \mathcal{N}$ which lack a Riesz interpolant in $\mathcal{N}$ when one exists in $B(\mathcal{H})$, we conclude:
\begin{theoremA}{D} \label{intro thm: K_23 and 2K_1+P_4}
    The operator systems $\mathcal{S}_{K_{2,3}}$ and $\mathcal{S}_{2K_1 + P_4}$ are not C*-nuclear.
\end{theoremA}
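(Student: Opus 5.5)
The plan is to combine the characterization of \cite{AKRSx} with Kavruk's Riesz interpolation criterion \cite{Kavrukw}. Suppose, for contradiction, that $\mathcal{S}_G$ is C*-nuclear for $G = K_{2,3}$ or $G = 2K_1+P_4$. By \cite{AKRSx}, every partially defined $G$-pattern matrix with entries in a C*-algebra $\mathcal{A}\subseteq B(\mathcal{H})$ that is $B(\mathcal{H})$-completable is then also $\mathcal{A}$-completable.

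Fix a non-injective von Neumann algebra $\mathcal{N}\subseteq B(\mathcal{H})$, for instance the group von Neumann algebra of $\mathbb{F}_2$. By Kavruk's theorem there are an integer $d$ and self-adjoint elements $x_1,x_2,y_1,y_2,y_3\in M_d(\mathcal{N})$ with the following properties:
\begin{itemize}
\item there is some $b\in M_d(B(\mathcal{H}))$ with $x_i<b<y_j$ for all $i,j$;
\item there is no $a\in M_d(\mathcal{N})$ with $x_i<a<y_j$ for all $i,j$.
\end{itemize}
Since $M_d(\mathcal{N})\subseteq M_d(B(\mathcal{H}))=B(\mathcal{H}^d)$ is again a non-injective von Neumann algebra, I may replace $\mathcal{N}$ by $M_d(\mathcal{N})$ and assume $d=1$. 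Adding a large scalar to every element, I may also assume all five elements, and $b$, are positive and invertible. Strict inequalities leave room for an $\varepsilon$ of slack, which will be used to pass between strict and non-strict inequalities.

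The heart of the proof is an explicit $\mathcal{N}$-valued partial matrix on $G$. For $K_{2,3}$, take bipartition classes $\{u_1,u_2\}$ and $\{v_1,v_2,v_3\}$. The two $u$-vertices are meant to carry the lower bounds $x_1,x_2$ and the three $v$-vertices the upper bounds $y_1,y_2,y_3$, matching the $2$-versus-$3$ shape of Kavruk's criterion. The entries are to be chosen so that two things hold.
\begin{enumerate}
\item[(i)] For any positive completion $P$, a fixed expression $a$ built from the unspecified entries (something like the real part of an entry, or a Schur-complement quantity attached to a missing edge) satisfies $x_i\le a\le y_j$. I would obtain these inequalities by compressing $P$ with scalar vectors supported on two or three vertices, for example $e_{u_i}-e_{v_j}$ together with a vertex across a missing edge. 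Each such compression is a positive $2\times 2$ or $3\times 3$ principal block, and its Schur complement should yield exactly one of the desired inequalities.
\item[(ii)] Given an interpolant $b\in B(\mathcal{H})$, one can write down a positive $B(\mathcal{H})$-valued completion. I would build it as a sum of positive pieces of the form $[\,c_k^* c_l\,]$, each supported on a clique or star of the completed graph, using $b-x_i>0$ and $y_j-b>0$ to take square roots.
\end{enumerate}
Strict positivity of the completion then comes from the slack, after replacing $x_i,y_j$ by $x_i\pm\varepsilon$ and $y_j\mp\varepsilon$.

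Granting (i) and (ii), the conclusion is immediate. The partial matrix is $B(\mathcal{H})$-completable by (ii). By C*-nuclearity and \cite{AKRSx} it is then $\mathcal{N}$-completable. Applying (i) to that completion, after shrinking the slack, produces an element $a\in\mathcal{N}$ with $x_i<a<y_j$, contradicting the choice of the $x_i,y_j$.

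For $2K_1+P_4$ I would run the same scheme: the two non-adjacent apex vertices play the role of $u_1,u_2$, and the path supplies the constraints coming from the $y_j$. Alternatively, I would look for a reduction in which a suitable clique blow-up or edge contraction (operations preserving C*-nuclearity by Theorem \ref{intro thm: permanance}) turns $2K_1+P_4$ into a graph containing $K_{2,3}$ as an induced subgraph.

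The main obstacle is the combinatorial-algebraic design in steps (i) and (ii). The single hidden quantity $a$ must be pinned between all five bounds using only positivity of small principal blocks. At the same time the unspecified entries among $v_1,v_2,v_3$ must remain flexible enough that a $B(\mathcal{H})$-interpolant yields a global positive completion. I would first work out the scalar ($\mathcal{N}=\mathbb{C}$) version of the construction to find the right pattern, and only then promote it to operator entries via Schur complements.
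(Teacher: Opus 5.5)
Your outline has the right architecture: it is the same reduction the paper uses, namely the completion characterization of \cite{AKRSx} combined with Kavruk's $(2,3)$ Riesz interpolation criterion for a non-injective von Neumann algebra. However, it omits the only substantive step, an explicit $\mathcal N$-valued partial matrix satisfying your (i) and (ii). You acknowledge this as ``the main obstacle,'' and without it there is no proof.

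The difficulty is real. The specified entries may depend only on the $x_i,y_j$ and must lie in $\mathcal N$. Meanwhile $b$, and any square roots of $b-x_i$ or $y_j-b$, may appear only in unspecified positions of the $B(\mathcal H)$-completion.

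Your design heuristic also points the wrong way. Suppose the interpolant sits in the missing hub entry $U$ at $(u_1,u_2)$. Then the maximal principal blocks in which $U$ is the only unknown are $\{u_1,u_2,v_j\}$, $j=1,2,3$, and each contains both $u_1$ and $u_2$. So the $u$-vertices cannot carry $x_1,x_2$ separately. Instead, each block must force a \emph{two-sided} bound $x_j\le a\le y_j$.

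The paper achieves this as follows.
\begin{itemize}
\item It sets $x_3:=x_2$, so that three two-sided constraints yield all five inequalities.
\item With $D_j=y_j-x_j$ and $C_j=y_j+x_j$, it factors $\begin{pmatrix} R-D_j & C_j\\ C_j & R-D_j\end{pmatrix}=\binom{P_j}{Q_j}\binom{P_j}{Q_j}^*$ with $P_j,Q_j\in M_{1,2}(\mathcal N)$.
\item The row-padded elements $\widehat P_j,\widehat Q_j\in M_2(\mathcal N)$ serve as the $(u_1,v_j)$ and $(u_2,v_j)$ entries, and the diagonal is $R,R,1,1,1$.
\item Positivity of the $\{u_1,u_2,v_j\}$ block, after a Schur complement and compression to the corner $u$ of $U$, reads $\begin{pmatrix} D_j & u-C_j\\ u^*-C_j & D_j\end{pmatrix}\ge 0$. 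After symmetrizing and a Hadamard conjugation this gives $x_j\le \tfrac14(u+u^*)\le y_j$.
\item Conversely, putting $\operatorname{diag}(2b,0)$ in the hub entry makes the same Schur complements strictly positive precisely because $x_j<b<y_j$. A further Schur complement then completes the matrix over $B(\mathcal H)$.
\end{itemize}

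For $2K_1+P_4$, the cliques $\{v,w,v_i,v_{i+1}\}$ replace the vertices $v_j$, and the path entries are set to $0$. With $G_i=\begin{pmatrix}-D_i & C_i\\ C_i & -D_i\end{pmatrix}$, the paper uses the telescoping choice $W_1=\tfrac R2 I_2+G_1$, $W_2=\tfrac R2 I_2$, $W_3=\tfrac R2 I_2+G_2$, $W_4=\tfrac R2 I_2+G_3-G_2$. Then $W_i+W_{i+1}=RI_2+G_i$, so each clique encodes one interval. The $B(\mathcal H)$-completion follows from chordality of $K_2+P_4$.

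Your fallback for $2K_1+P_4$ cannot succeed. Clique blow-ups, edge contractions and vertex deletions never increase the independence number $\alpha(\cdot)$:
\begin{itemize}
\item an independent set in $G^{(n)}$ meets each fiber at most once and projects to an independent set of $G$;
\item an independent set in $G/e$ containing the contracted vertex lifts to $G$ by replacing that vertex with either endpoint of $e$.
\end{itemize}
Since $\alpha(2K_1+P_4)=2<3=\alpha(K_{2,3})$, no sequence of these operations produces a graph containing an induced $K_{2,3}$. So $2K_1+P_4$ genuinely requires its own construction, as in the paper.
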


Three of the remaining four ``forbidden subgraphs'' follow from Theorem \ref{intro thm: K_23 and 2K_1+P_4} by combinatorial arguments: we show that the graph $2K_1 + P_4$ can be obtained from each of the graphs $C_5$, $\overline{C_6}$ and $\overline{P_6}$ by a sequence of graph operations described in Theorem \ref{intro thm: permanance}. It follows that none of these graphs can be C*-nuclear since $2K_1 + P_4$ is not C*-nuclear. For the final graph, $K_{2,2,2}$, we show that the the ``Namioka-Phelps test space'' $\mathcal{W}_{2,3}$ is a retract of the associated operator system $\mathcal{S}_{K_{2,2,2}}$. The former was shown by Kavruk \cite{KavrukNP} to be non-C*-nuclear, so that $\mathcal{S}_{K_{2,2,2}}$ is not C*-nuclear. From these results we rule out the remaining forbidden subgraphs, which allows us to conclude the other direction of Theorem \ref{thm: main result intro}.

\begin{theoremA}{E} \label{intro thm: other forbidden graphs}
    If $G$ is one of the six forbidden subgraphs, then $\mathcal{S}_G$ is not C*-nuclear.
\end{theoremA}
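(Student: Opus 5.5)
The plan is a case analysis over the six forbidden subgraphs $K_{2,3}$, $2K_1+P_4$, $C_5$, $\overline{C_6}$, $\overline{P_6}$ and $K_{2,2,2}$. Two cases are already settled by Theorem \ref{intro thm: K_23 and 2K_1+P_4}. For the remaining ones we use the contrapositive of Theorem \ref{intro thm: permanance}. Suppose $\mathcal{S}_G$ were C*-nuclear and $H$ could be obtained from $G$ by vertex deletions, edge contractions, clique blow-ups and joins with complete graphs. Then $\mathcal{S}_H$ would be C*-nuclear. So for each of $C_5$, $\overline{C_6}$ and $\overline{P_6}$ it suffices to exhibit an explicit sequence of these operations producing $2K_1+P_4$.

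\textbf{The graphs $C_5$, $\overline{C_6}$, $\overline{P_6}$.} For $C_5$ with vertices $1,\dots,5$ in cyclic order, I would first blow up vertex $1$ into a clique $\{1,1'\}$, then add further vertices via a join with $K_1$. Since $2K_1+P_4$ has six vertices and contains $C_5$-type structure, the natural first attempt is different: contract an edge of $C_5$ to get $C_4$, and read $2K_1+P_4$ as $K_{2}$-complement joined with $P_4$. The join of $\overline{K_2}$ with $P_4$ contains induced $C_4$'s, so it must instead be obtained by a blow-up. Concretely, I would look for a vertex of $C_5$ whose blow-up into two nonadjacent copies produces $2K_1+P_4$ after deletions. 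That operation is not allowed, though, since blow-ups create cliques.

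The more promising route is to use joins. The join of $C_5$ with $K_1$ gives a wheel $W_5$. In $W_5$, blow up the hub into $K_2$, then contract rim edges and delete vertices, aiming at the pattern where two nonadjacent vertices are both adjacent to an induced $P_4$. I expect this to need trial and error, and it is the combinatorial heart of the argument. The same search is needed for $\overline{C_6}$ (the triangular prism) and $\overline{P_6}$. For these, edge contraction of a rung of the prism, followed by a join with $K_1$ and deletions, looks most promising.

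\textbf{The graph $K_{2,2,2}$.} This case needs the operator-theoretic argument. Write the octahedron with vertex pairs $\{1,2\},\{3,4\},\{5,6\}$ as the nonedges. I will construct unital completely positive maps
\[\iota:\mathcal{W}_{2,3}\to\mathcal{S}_{K_{2,2,2}},\qquad r:\mathcal{S}_{K_{2,2,2}}\to\mathcal{W}_{2,3}\]
with $r\circ\iota=\mathrm{id}$. Here $\mathcal{W}_{2,3}$ is presumably the span of the generators of the free product $\ell^\infty_2 * \ell^\infty_2 * \ell^\infty_2$ (or its dual description). The idea is to use the three nonedge pairs as the three copies of $\ell^\infty_2$. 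For $\iota$, send the projection $p_k$ to $E_{2k-1,2k-1}$ and $1-p_k$ to $E_{2k,2k}$, suitably normalized, with off-diagonal structure encoded by the edges. The conditional-expectation-like map $r$ would be compression to a suitable pattern.

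Given the retraction, C*-nuclearity passes to retracts. If $\mathcal{S}_{K_{2,2,2}}\otimes_{\min}\mathcal{A}=\otimes_{\max}$, then applying $r\otimes\mathrm{id}$ and $\iota\otimes\mathrm{id}$, which are ucp for both tensor products, gives the same for $\mathcal{W}_{2,3}$. This contradicts Kavruk's result \cite{KavrukNP}.

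The main obstacle is making the retraction maps genuinely completely positive. I would first try to verify this on the dual side, identifying $\mathcal{S}_{K_{2,2,2}}^d$ with a quotient of $\ell^\infty_6$-type data.
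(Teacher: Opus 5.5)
Your overall architecture matches the paper's: Theorem D for $K_{2,3}$ and $2K_1+P_4$, reduction to $2K_1+P_4$ via the permanence operations for $C_5$, $\overline{C_6}$, $\overline{P_6}$, and a retraction onto $\mathcal{W}_{2,3}$ plus Kavruk's theorem for $K_{2,2,2}$. However, neither nontrivial step is carried out, and the routes you sketch for the combinatorial step cannot work.

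\textbf{The combinatorial cases.} Joining with $K_n$ cannot help. Every vertex descending from the joined clique stays universal under blow-ups, contractions and deletions, while $2K_1+P_4$ has no universal vertex. So all such vertices must eventually be deleted, and the join could have been omitted. Contracting an edge of $C_5$ gives $C_4$. Contracting a rung of the prism $\overline{C_6}$ gives $W_4$: contracting $\{0,3\}$ yields a vertex adjacent to $1,2,4,5$, which induce the cycle $1\sim4\sim2\sim5\sim1$. Both $C_4$ and $W_4$ are C*-nuclear by Theorems B and C, so no further permanence operations can reach the non-C*-nuclear graph $2K_1+P_4$. The missing idea is to blow up \emph{several} vertices into $2$-cliques, and to place the two copies of a blown-up vertex in \emph{different} branch sets of an induced-minor model.
\begin{itemize}
\item For $C_5$ on $0,\dots,4$: blow up $1,2,3,4$. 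Contract $\{0\}$ and $\{2^+,3^+\}$ (giving $v,w$), and $\{1^-\}$, $\{1^+,2^-\}$, $\{3^-,4^+\}$, $\{4^-\}$ (giving the path).
\item For $\overline{C_6}$: blow up $3$ and $4$ and use $\{0\}$, $\{1\}$, $\{3^-\}$, $\{5,3^+\}$, $\{2,4^+\}$, $\{4^-\}$.
\item For $\overline{P_6}$, i.e.\ $\overline{C_6}$ plus the edge $\{1,2\}$: the same model works, since that edge joins two branch sets that are already adjacent via $1\sim 4^+$.
\end{itemize}

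\textbf{The case $K_{2,2,2}$.} You have the wrong test system. In the paper, $\mathcal{W}_{2,3}=\{\operatorname{diag}(a,b,c,x,y,z): a+b+c=x+y+z\}$ is $5$-dimensional. The span of the generators of $\ell^\infty_2*\ell^\infty_2*\ell^\infty_2$, or its dual, is only $4$-dimensional. Your $\iota$ is not well defined as a unital map, since $1=p_k+(1-p_k)$ would go to $E_{2k-1,2k-1}+E_{2k,2k}$, which depends on $k$. The map $r$ is never specified. So the retraction, which is the entire content of this case, is absent.

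What is needed is a symmetrization. Label the vertices so that the nonedges are exactly the pairs with $|i-j|=3$. Then conjugation by the cyclic shift $P$ preserves $\mathcal{S}_{K_{2,2,2}}$, and $\rho(A)=\frac16\sum_{n=0}^{5}P^{-n}AP^{n}$ is a ucp idempotent of $\mathcal{S}_{K_{2,2,2}}$ onto $\operatorname{span}\{I,P,P^2,P^4,P^5\}$. The discrete Fourier transform diagonalizes this span to $\{\operatorname{diag}(d_0,\dots,d_5): d_0+d_2+d_4=d_1+d_3+d_5\}\cong\mathcal{W}_{2,3}$. This works because, with $\omega=e^{i\pi/3}$, the identity $1+\omega^{2n}+\omega^{4n}=\omega^n+\omega^{3n}+\omega^{5n}$ holds exactly for $n\neq 3$. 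Only with this retraction in hand does your (correct) retract argument combine with Kavruk's theorem.
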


Our paper is organized as follows. In Section \ref{Sec: Prelims}, we discuss preliminary background concerning graphs, operator completion techniques, and operator systems. In Section \ref{sec: Permanence}, we prove that certain combinatorial graph operations preserve the C*-nuclearity of the associated graph operator system. In Section \ref{sec: 2-chordal implies C*-nuclear} we prove that the graph $C_4$ is C*-nuclear, from whence we conclude that every 2-chordal graph is C*-nuclear. In Section \ref{sec: K_2,3 and 2K_1+P_4 not C*-nuc} we prove Theorem \ref{intro thm: K_23 and 2K_1+P_4}. Finally, in Section \ref{sec: not 2-chordal implies not C*-nuclear}, we establish the failure of C*-nuclearity of the remaining forbidden subgraph operator systems, completing the main result.

\section{Preliminaries} \label{Sec: Prelims}

In this section, we review definitions, notations, and fundamental results concerning graphs, positive matrix completion problems, and operator systems which will be used throughout the paper.

All graphs in this paper will be finite, simple and undirected. Therefore we define a graph $G$ to be a pair $(V,E)$ where $V$ is a finite set of vertices and $E \subseteq V \times V$ is symmetric, i.e. $(i,j) \in E$ implies that $(j,i) \in E$, and reflexive, i.e. $(i,i) \in E$ for all $i \in V$. We sometimes write $V(G)$ and $E(G)$ in place of $V$ and $E$, respectively, when more specificity is needed. Given a pair of graphs $G$ and $H$, we say that $G$ is \textit{isomorphic} to $H$ if there exists a bijection $\pi: V(G) \to V(H)$ such that $(\pi(i),\pi(j)) \in E(H)$ if and only if $(i,j) \in E(G)$. 

Given a graph $G$ and a subset $U \subseteq V(G)$, the \textit{subgraph $G(U)$ induced by $U$} is the graph with vertex set $U$ and edge set \[ E(U) = \{ (i,j) \in E(G) : i,j \in U \}. \] Equivalently $G(U)$ is the graph obtained by deleting every vertex of $V(G) \setminus U$ as well as every edge connected to a vertex in $V(G) \setminus U$. We say that a graph $H$ is an \textit{induced subgraph} of a graph $G$ if $H$ is isomorphic to $G(U)$ for some subset $U \subseteq V(G)$. An \emph{edge contraction} of a graph $G$ is the graph obtained by replacing a pair of vertices $i,j \in V(G)$ satisfying $(i,j) \in E(G)$ with a new vertex $\{i,j\}$ and replacing each edge connected to $i$ and $j$ with edges connected to a new vertex $\{i,j\}$ (e.g. if $(i,k) \in E(G)$ for $k\neq j$, then $(\{i,j\},k)$ is an edge in the new edge-contracted graph). A \emph{vertex deletion} of a graph $G$ is the graph obtained by removing a fixed vertex $x \in V(G)$ from the vertex set and deleting every edge connected to the removed vertex $x$. Thus, for a subset $U\subset V(G)$, the induced subgraph on $U$ is obtained by performing successive vertex deletions of the vertices in $V(G) \setminus U$. A graph $H$ is called an \textit{induced minor} of a graph $G$ if it is isomorphic to a graph obtained from $G$ by a sequence of \textit{edge contractions} and \textit{vertex deletions}.  Some examples of induced minors of a given graph are depicted in Figure \ref{fig: subgraphs and ind minors}.

\begin{figure*}[h!]
    \centering
    \begin{subfigure}
    \centering
    {
    \begin{tikzpicture}[
      node distance=2cm,
      thick,
      main node/.style={circle, draw, fill=black, minimum size=10pt, inner sep=0pt},
      scale=1.0
    ]
    
      \node[main node, fill=blue] (1) at (60:1.5) {};
      \node[main node, fill=blue] (2) at (120:1.5) {};
      \node[main node, fill=blue] (3) at (180:1.5) {};
      \node[main node, fill=blue] (4) at (240:1.5) {};
      \node[main node, fill=blue] (5) at (300:1.5) {};
      \node[main node, fill=blue] (6) at (0:1.5) {};
      \node[main node, fill=blue]
      (7) at (0,0) {};
    
      \draw[black, thick] (1) -- (2);
      \draw[black, thick] (2) -- (3);
      \draw[black, thick] (3) -- (4);
      \draw[black, thick] (4) -- (5);
      \draw[black, thick] (5) -- (6);
      \draw[black, thick] (6) -- (1);
      \draw[black, thick] (1) -- (7);
      \draw[black, thick] (2) -- (7);
      \draw[black, thick] (3) -- (7);
      \draw[black, thick] (4) -- (7);
      \draw[black, thick] (5) -- (7);
      \draw[black, thick] (6) -- (7);
    
      \node at (0,-2.2) {A graph $G$};

    \end{tikzpicture}
    }
    \end{subfigure}%
    ~ 
    \begin{subfigure}
    \centering
    {
    \begin{tikzpicture}[
      node distance=2cm,
      thick,
      main node/.style={circle, draw, fill=black, minimum size=10pt, inner sep=0pt},
      scale=1.0
    ]
    
      \node[main node, fill=red] 
      (1) at (60:1.5) {};
      \node[main node, fill=red] 
      (2) at (120:1.5) {};
      \node[main node, fill=red] 
      (3) at (180:1.5) {};
      \node[main node, fill=red] 
      (4) at (240:1.5) {};
      \node[main node, fill=red] 
      (5) at (300:1.5) {};
      \node[main node, fill=red] 
      (6) at (0:1.5) {};
    
      \draw[black, thick] (1) -- (2);
      \draw[black, thick] (2) -- (3);
      \draw[black, thick] (3) -- (4);
      \draw[black, thick] (4) -- (5);
      \draw[black, thick] (5) -- (6);
      \draw[black, thick] (6) -- (1);
    
      \node at (0,-2.2) {An induced subgraph of $G$};

    \end{tikzpicture}
    }
    \end{subfigure}%
    \begin{subfigure}
    {
    \begin{tikzpicture}[
      node distance=2cm,
      thick,
      main node/.style={circle, draw, fill=black, minimum size=10pt, inner sep=0pt},
      scale=1.0
    ]
    
      \node[main node, fill=green] 
      (1) at (45:1.5) {};
      \node[main node, fill=green] 
      (2) at (135:1.5) {};
      \node[main node, fill=green] 
      (3) at (225:1.5) {};
      \node[main node, fill=green] 
      (4) at (315:1.5) {};
    
      \draw[black, thick] (1) -- (2);
      \draw[black, thick] (2) -- (3);
      \draw[black, thick] (3) -- (4);
      \draw[black, thick] (4) -- (1);
    
      \node at (0,-2.2) {An induced minor of $G$};

    \end{tikzpicture}
    }
    \end{subfigure}
    \caption{Subgraphs and induced minors} \label{fig: subgraphs and ind minors}
\end{figure*}

Some fundamental graphs will appear throughout the paper (see Figure \ref{fig: fundamental graphs} for some of these graphs). A graph $K$ is called \textit{complete} if $E(K) = V(K) \times V(K)$. Clearly two complete graphs are isomorphic whenever their vertex sets have the same cardinality. We let $K_n$ denote the complete graph with $n$ vertices. For a graph $G$, a subset $U \subseteq V(G)$ is called a \textit{clique} if the induced subgraph $G(U)$ is complete. We let $C_n$ denote the \textit{$n$-cycle} with vertex set $V(C_n) = \{0,1,\dots,n-1\} =: [n]$ and edge set 
\[ E(C_n) = \{ \ (i,j) \in [n] \times [n] \ : \ 1 \geq |i-j| \ (\mathrm{mod} \ n) \ \}.\] 
We let $P_n$ denote the \textit{$n$-vertex path} with vertex set $V(P_n) = [n]$ and edge set 
\[
E(P_n) = \{(i,j) : |i-j| \leq 1\}
\] 

\begin{figure*}[h!]
    \centering
    \begin{subfigure}
    \centering
    {
    \begin{tikzpicture}[
      node distance=2cm,
      thick,
      main node/.style={circle, draw, fill=black, minimum size=10pt, inner sep=0pt},
      scale=1.0
    ]
    
      \node[main node, fill=blue] (1) at (90:1.5) {};
      \node[main node, fill=blue] (2) at (18:1.5) {};
      \node[main node, fill=blue] (3) at (306:1.5) {};
      \node[main node, fill=blue] (4) at (234:1.5) {};
      \node[main node, fill=blue] (5) at (162:1.5) {};
    
      \draw[black, thick] (1) -- (2);
      \draw[black, thick] (2) -- (3);
      \draw[black, thick] (3) -- (4);
      \draw[black, thick] (4) -- (5);
      \draw[black, thick] (5) -- (1);
    
      \node at (0,-2.2) {5-cycle $C_5$};

    \end{tikzpicture}
    }
    \end{subfigure}%
    ~ 
    \begin{subfigure}
    {
    \begin{tikzpicture}[
      node distance=2cm,
      thick,
      main node/.style={circle, draw, fill=black, minimum size=10pt, inner sep=0pt},
      scale=1.0
    ]
    
      \node[main node, fill=green] (1) at (0,1.5) {};
      \node[main node, fill=green] (2) at (0,0.5) {};
      \node[main node, fill=green] (3) at (0,-0.5) {};
      \node[main node, fill=green] (4) at (0,-1.5) {};

      \draw[black, thick] (1) -- (2);
      \draw[black, thick] (2) -- (3);
      \draw[black, thick] (3) -- (4);
    
      \node at (0,-2.2) {4-path $P_4$};

    \end{tikzpicture}
    }
    \end{subfigure}
    ~ 
    \begin{subfigure}
    {
    \begin{tikzpicture}[
      node distance=2cm,
      thick,
      main node/.style={circle, draw, fill=black, minimum size=10pt, inner sep=0pt},
      scale=1.0
    ]
    
      \node[main node, fill=red] 
      (1) at (90:1.5) {};
      \node[main node, fill=red] 
      (2) at (18:1.5) {};
      \node[main node, fill=red] 
      (3) at (306:1.5) {};
      \node[main node, fill=red] 
      (4) at (234:1.5) {};
      \node[main node, fill=red] 
      (5) at (162:1.5) {};
    
      \draw[black, thick] (1) -- (2);
      \draw[black, thick] (2) -- (3);
      \draw[black, thick] (3) -- (4);
      \draw[black, thick] (3) -- (2);
      \draw[black, thick] (3) -- (1);
      \draw[black, thick] (4) -- (5);
      \draw[black, thick] (4) -- (3);
      \draw[black, thick] (4) -- (2);
      \draw[black, thick] (4) -- (1);
      \draw[black, thick] (5) -- (1);
      \draw[black, thick] (5) -- (2);
      \draw[black, thick] (5) -- (3);
      \draw[black, thick] (5) -- (4);
    
      \node at (0,-2.2) {Complete graph $K_5$};

    \end{tikzpicture}
    }
    \end{subfigure}
    \caption{Some fundamental graphs} \label{fig: fundamental graphs}
\end{figure*}

We will frequently construct new graphs from a given graph or a set of graphs by means of certain graph operations. Given a graph $G$, we let $\overline{G}$ denote the \textit{complement} of $G$ with vertex set $V(G)$ and edge set $E(\overline{G}) = \{ \ (i,i) \ : \ \in i \in V(G) \ \} \sqcup \{ \ (i,j) \ : \ i \neq j, \ (i,j) \notin E(G) \ \}$. For a fixed graph $G$ and an integer $n \in \mathbb{N}$, we let $n G$ denote the disjoint union of $n$ copies of $G$ with no edges connecting the distinct copies of $G$. For instance, $3K_1$ is the graph with 3 vertices and no edges. Given two graphs $G$ and $H$, we let $G+H$ denote the \textit{join} of $G$ and $H$ with vertex set $V(G+H) = V(G) \sqcup V(H)$ the disjoint union of their vertex sets, and edge set
\[ E(G+H) = E(G) \cup E(H) \cup \{(i,j), (j,i) : i \in V(G), j \in V(H)\}. \]
An important example of a graph join is the \textit{$n$-wheel} defined by $W_n = K_1 + C_n$ (see Figure \ref{fig: Graph operations} for an example). We define $G_1 + G_2 + \dots + G_n$ to be the graph consisting of the disjoint union of $G_1, \dots G_n$ with additional edges connecting each vertex of $G_i$ to each vertex of $G_j$ for $i \neq j$. Thus, for instance, we define $K_{n_1, n_2, \dots, n_m}$ to be the graph $n_1 K_1 + n_2 K_1 + \dots + n_m K_1$ (the $m$-partite graph with parts $n_i K_1$ of $n_i$ disjoint vertices).  Given two graphs $G = (V_1, E_1)$ and $H=(V_2,E_2)$ for which there is a common induced clique subgraph $K$ in both $G$ and $H$, we define the \textit{clique sum} $G \vee_K H$ to be the graph with vertex set $V(G \vee_K H) = V_1 \cup V_2$ and edge set $E(G \vee_K H) = E_1 \cup E_2$. Finally, given a graph $G$ with vertex set $V(G) = [m]$ and an integer $n \in \mathbb{N}$, we define the $n$-\textit{clique blowup} of $G$ to be the graph $G^{(n)}$ with vertex set
\[ V(G^{(n)}) = \{0_0, 0_1, \dots, 0_{n-1}, 1_0, 1_1, \dots, 1_{n-1}, \dots, (m-1)_0, (m-1)_2, \dots, (m-1)_{n-1} \} \]
and edge set
\[ 
E(G^{(n)}) = \{ \ (i_j, i_k) \ : \ j \neq k, \ i \in V(G) \ \} \cup \{ \ (i_k, j_l) \ : \ (i,j) \in E(G), \ k,l \in [n] \ \}. 
\]
In other words, $G^{(n)}$ is the graph obtained by replacing each vertex of $G$ with a $n$-clique and connecting vertices of separate cliques if the original vertices were connected in $G$.

\begin{figure*}[h!] 
    \centering
    \begin{subfigure}
    {
    \begin{tikzpicture}[
      node distance=2cm,
      thick,
      main node/.style={circle, draw, fill=black, minimum size=10pt, inner sep=0pt},
      scale=1.0
    ]
    
      \node[main node, fill=blue] (1) at (60:1.5) {};
      \node[main node, fill=blue] (2) at (120:1.5) {};
      \node[main node, fill=blue] (3) at (180:1.5) {};
      \node[main node, fill=blue] (4) at (240:1.5) {};
      \node[main node, fill=blue] (5) at (300:1.5) {};
      \node[main node, fill=blue]
      (6) at (0:1.5) {};
    
      \draw[black, thick] (3) -- (1);
      \draw[black, thick] (1) -- (5);
      \draw[black, thick] (4) -- (2);
      \draw[black, thick] (4) -- (1);
      \draw[black, thick] (5) -- (2);
      \draw[black, thick] (5) -- (3);
      \draw[black, thick] (6) -- (2);
      \draw[black, thick] (6) -- (3);
      \draw[black, thick] (6) -- (4);
      \draw[red, dotted] (1) -- (2);
      \draw[red, dotted] (2) -- (3);
      \draw[red, dotted] (3) -- (4);
      \draw[red, dotted] (4) -- (5);
      \draw[red, dotted] (5) -- (6);
      \draw[red, dotted] (6) -- (1);
    
      \node at (0,-2.2) {(a) Graph complement $\overline{C_n}$, $n \geq 5$};

    \end{tikzpicture}
    }
    \end{subfigure}
    \begin{subfigure}
    {
    \begin{tikzpicture}[
      node distance=2cm,
      thick,
      main node/.style={circle, draw, fill=black, minimum size=10pt, inner sep=0pt},
      scale=1.0
    ]
    
      \node[main node, fill=red] (1) at (150:1.5) {};
      \node[main node, fill=red] (2) at (210:1.5) {};
      \node[main node, fill=green] (3) at (60:1.5) {};
      \node[main node, fill=green] (4) at (0:0.75) {};
      \node[main node, fill=green] (5) at (300:1.5) {};
    
      \draw[black, thick] (1) -- (3);
      \draw[black, thick] (1) -- (4);
      \draw[black, thick] (1) -- (5);
      \draw[black, thick] (2) -- (3);
      \draw[black, thick] (2) -- (4);
      \draw[black, thick] (2) -- (5);
    
      \node at (0,-2.2) {(b) Join $2K_1 + 3K_1 = K_{2,3}$};

    \end{tikzpicture}
    }
    \end{subfigure}
    \begin{subfigure}
        {
        \begin{tikzpicture}[
      node distance=2cm,
      thick,
      main node/.style={circle, draw, fill=black, minimum size=10pt, inner sep=0pt},
      scale=1.0
    ]
    
      \node[main node, fill=yellow] 
      (1) at (45:1.5) {};
      \node[main node, fill=yellow] 
      (2) at (135:1.5) {};
      \node[main node, fill=yellow] 
      (3) at (225:1.5) {};
      \node[main node, fill=yellow] 
      (4) at (315:1.5) {};
      \node[main node, fill=yellow]
      (5) at (0,0) {};
    
      \draw[black, thick] (1) -- (2);
      \draw[black, thick] (2) -- (3);
      \draw[black, thick] (3) -- (4);
      \draw[black, thick] (4) -- (1);
      \draw[black, thick] (5) -- (1);
      \draw[black, thick] (5) -- (2);
      \draw[black, thick] (5) -- (3);
      \draw[black, thick] (5) -- (4);
    
      \node at (0,-2.2) {(c) The 4-wheel $W_4 = K_1 + C_4$.};

    \end{tikzpicture}
        }
    \end{subfigure} 
    \begin{subfigure}
    {
    \begin{tikzpicture}[
      node distance=2cm,
      thick,
      main node/.style={circle, draw, fill=black, minimum size=10pt, inner sep=0pt},
      scale=1.0
    ]
    
      \node[main node, fill=blue] (1) at (120:1.5) {};
      \node[main node, fill=blue] (2) at (180:1.5) {};
      \node[main node, fill=blue] (3) at (240:1.5) {};
      \node[main node, fill=green] (4) at (300:1.5) {};
      \node[main node, fill=green] (5) at (0:1.5) {};
      \node[main node, fill=green]
      (6) at (60:1.5) {};
      \node[main node, fill=red]
      (7) at (0,0.5) {};
      \node[main node, fill=red]
      (8) at (0,-0.5) {};
    
      \draw[black, thick] (1) -- (2);
      \draw[black, thick] (1) -- (3);
      \draw[black, thick] (1) -- (7);
      \draw[black, thick] (1) -- (8);
      \draw[black, thick] (2) -- (3);
      \draw[black, thick] (2) -- (7);
      \draw[black, thick] (2) -- (8);
      \draw[black, thick] (3) -- (7);
      \draw[black, thick] (3) -- (8);
      \draw[black, thick] (4) -- (5);
      \draw[black, thick] (4) -- (8);
      \draw[black, thick] (5) -- (6);
      \draw[black, thick] (6) -- (7);
      \draw[black, thick] (7) -- (8);
    
      \node at (0,-2.2) {(d) Clique sum of $K_5$ and $C_5$};

    \end{tikzpicture}
    }
    \end{subfigure}
    \begin{subfigure}
    {
    \begin{tikzpicture}[
      node distance=2cm,
      thick,
      main node/.style={circle, draw, fill=black, minimum size=10pt, inner sep=0pt},
      scale=1.0
    ]
    
      \node[main node, fill=blue] (1) at (120:1.5) {};
      \node[main node, fill=blue] (2) at (150:1.5) {};
      \node[main node, fill=green] (3) at (210:1.5) {};
      \node[main node, fill=green] (4) at (240:1.5) {};
      \node[main node, fill=red] (5) at (300:1.5) {};
      \node[main node, fill=red]
      (6) at (330:1.5) {};
      \node[main node, fill=yellow]
      (7) at (30:1.5) {};
      \node[main node, fill=yellow]
      (8) at (60:1.5) {};
    
      \draw[black, thick] (1) -- (2);
      \draw[black, thick] (1) -- (3);
      \draw[black, thick] (1) -- (4);
      \draw[black, thick] (2) -- (3);
      \draw[black, thick] (2) -- (4);
      \draw[black, thick] (3) -- (4);
      \draw[black, thick] (3) -- (5);
      \draw[black, thick] (3) -- (6);
      \draw[black, thick] (4) -- (5);
      \draw[black, thick] (4) -- (6);
      \draw[black, thick] (5) -- (6);
      \draw[black, thick] (5) -- (7);
      \draw[black, thick] (5) -- (8);
      \draw[black, thick] (6) -- (7);
      \draw[black, thick] (6) -- (8);
      \draw[black, thick] (7) -- (8);
      \draw[black, thick] (7) -- (1);
      \draw[black, thick] (7) -- (2);
      \draw[black, thick] (8) -- (1);
      \draw[black, thick] (8) -- (2);
    
      \node at (0,-2.2) {(e) Clique blowup $C_4^{(2)}$};

    \end{tikzpicture}
    }
    \end{subfigure}
    \caption{Graph operations} \label{fig: Graph operations}
\end{figure*}

A graph is said to be \textit{chordal} if it does not contain a cycle $C_n$ for $n \geq 4$ as an induced subgraph. G. A. Dirac \cite{Dirac61} proved that a graph is chordal if and only if it is a clique sum of cliques. Chordal graphs can also be characterized in terms of their \textit{sparsity order}. Given a graph $G$, let $\mathcal{P}_G$ denote the set of all positive semidefinite matrices over $\mathbb{C}$ with sparsity pattern specified by $G$, meaning that there is a 0 in the $(i,j)$ entry of the matrix whenever $(i,j)$ is not an edge of $G$. The sparsity order $\text{ord}(G)$ of $G$ is defined to be the maximum rank of an extreme ray of the cone $\mathcal{P}_G$. Agler, McCullough, Helton and Rodman proved in \cite{AHMR88} that a graph $G$ is chordal if and only if $\text{ord}(G) = 1$.

The graphs $G$ such that $\text{ord}(G) \leq 2$ were characterized by Laurent in \cite[Theorem 13]{Laurent01}. A stronger condition was previously studied by McCullough in \cite{McCullough88} and \cite{McCullough93}. McCullough defines a graph to be \textit{2-chordal} if it does not contain an induced subgraph from the family of forbidden graphs $\mathcal{F}$ as depicted in Figure \ref{fig: Graph operations}. In the next theorem, we refer to the $4$-wheel graph $W_4$ from Figure \ref{fig: Graph operations} (c) and the family of forbidden graphs $\mathcal{F}$ from Figure \ref{Fig: forbidden subgraphs}.

\begin{figure*}[h!]
    \centering
    \begin{subfigure}
    {
    \begin{tikzpicture}[
      node distance=2cm,
      thick,
      main node/.style={circle, draw, fill=black, minimum size=10pt, inner sep=0pt},
      scale=1.0
    ]
    
      \node[main node, fill=blue] (1) at (60:1.5) {};
      \node[main node, fill=blue] (2) at (120:1.5) {};
      \node[main node, fill=blue] (3) at (180:1.5) {};
      \node[main node, fill=blue] (4) at (240:1.5) {};
      \node[main node, fill=blue] (5) at (300:1.5) {};
      \node[main node, fill=blue]
      (6) at (0:1.5) {};
    
      \draw[black, thick] (3) -- (1);
      \draw[black, thick] (1) -- (5);
      \draw[black, thick] (4) -- (2);
      \draw[black, thick] (4) -- (1);
      \draw[black, thick] (5) -- (2);
      \draw[black, thick] (5) -- (3);
      \draw[black, thick] (6) -- (2);
      \draw[black, thick] (6) -- (3);
      \draw[black, thick] (6) -- (4);
      \draw[red, dotted] (1) -- (2);
      \draw[red, dotted] (2) -- (3);
      \draw[red, dotted] (3) -- (4);
      \draw[red, dotted] (4) -- (5);
      \draw[red, dotted] (5) -- (6);
      \draw[red, dotted] (6) -- (1);
    
      \node at (0,-2.2) {(a) Graph complement $\overline{C_6}$};

    \end{tikzpicture}
    }
    \end{subfigure}
    \begin{subfigure}
    {
    \begin{tikzpicture}[
      node distance=2cm,
      thick,
      main node/.style={circle, draw, fill=black, minimum size=10pt, inner sep=0pt},
      scale=1.0
    ]
    
      \node[main node, fill=blue] (1) at (60:1.5) {};
      \node[main node, fill=blue] (2) at (120:1.5) {};
      \node[main node, fill=blue] (3) at (180:1.5) {};
      \node[main node, fill=blue] (4) at (240:1.5) {};
      \node[main node, fill=blue] (5) at (300:1.5) {};
      \node[main node, fill=blue]
      (6) at (0:1.5) {};
    
      \draw[black, thick] (3) -- (1);
      \draw[black, thick] (1) -- (5);
      \draw[black, thick] (4) -- (2);
      \draw[black, thick] (4) -- (1);
      \draw[black, thick] (5) -- (2);
      \draw[black, thick] (5) -- (3);
      \draw[black, thick] (6) -- (2);
      \draw[black, thick] (6) -- (3);
      \draw[black, thick] (6) -- (4);
      \draw[red, dotted] (1) -- (2);
      \draw[red, dotted] (2) -- (3);
      \draw[red, dotted] (3) -- (4);
      \draw[black, thick] (4) -- (5);
      \draw[red, dotted] (5) -- (6);
      \draw[red, dotted] (6) -- (1);
    
      \node at (0,-2.2) {(b) Graph complement $\overline{P_6}$};

    \end{tikzpicture}
    }
    \end{subfigure}
    \begin{subfigure}
    \centering
    {
    \begin{tikzpicture}[
      node distance=2cm,
      thick,
      main node/.style={circle, draw, fill=black, minimum size=10pt, inner sep=0pt},
      scale=1.0
    ]
    
      \node[main node, fill=blue] (1) at (90:1.5) {};
      \node[main node, fill=blue] (2) at (18:1.5) {};
      \node[main node, fill=blue] (3) at (306:1.5) {};
      \node[main node, fill=blue] (4) at (234:1.5) {};
      \node[main node, fill=blue] (5) at (162:1.5) {};
    
      \draw[black, thick] (1) -- (2);
      \draw[black, thick] (2) -- (3);
      \draw[black, thick] (3) -- (4);
      \draw[black, thick] (4) -- (5);
      \draw[black, thick] (5) -- (1);
    
      \node at (0,-2.2) {(c) The $n$-cycle $C_n$, $n \geq 5$};

    \end{tikzpicture}
    }
    \end{subfigure}%
    \begin{subfigure}
    {
    \begin{tikzpicture}[
      node distance=2cm,
      thick,
      main node/.style={circle, draw, fill=black, minimum size=10pt, inner sep=0pt},
      scale=1.0
    ]
    
      \node[main node, fill=blue] (1) at (150:1.5) {};
      \node[main node, fill=blue] (2) at (210:1.5) {};
      \node[main node, fill=blue] (3) at (60:1.5) {};
      \node[main node, fill=blue] (4) at (0:0.75) {};
      \node[main node, fill=blue] (5) at (300:1.5) {};
    
      \draw[black, thick] (1) -- (3);
      \draw[black, thick] (1) -- (4);
      \draw[black, thick] (1) -- (5);
      \draw[black, thick] (2) -- (3);
      \draw[black, thick] (2) -- (4);
      \draw[black, thick] (2) -- (5);
    
      \node at (0,-2.2) {(d) The graph $K_{2,3}$};

    \end{tikzpicture}
    }
    \end{subfigure}
    \begin{subfigure}
    {
    \begin{tikzpicture}[
      node distance=2cm,
      thick,
      main node/.style={circle, draw, fill=black, minimum size=10pt, inner sep=0pt},
      scale=1.0
    ]
    
      \node[main node, fill=blue] (1) at (0:1.5) {};
      \node[main node, fill=blue] (2) at (180:1.5) {};
      \node[main node, fill=blue] (3) at (60:1.5) {};
      \node[main node, fill=blue] (4) at (240:1.5) {};
      \node[main node, fill=blue] (5) at (120:1.5) {};
      \node[main node, fill=blue] (6) at (300:1.5) {};
    
      \draw[black, thick] (1) -- (3);
      \draw[black, thick] (1) -- (4);
      \draw[black, thick] (1) -- (5);
      \draw[black, thick] (1) -- (6);
      \draw[black, thick] (2) -- (3);
      \draw[black, thick] (2) -- (4);
      \draw[black, thick] (2) -- (5);
      \draw[black, thick] (2) -- (6);
      \draw[black, thick] (3) -- (5);
      \draw[black, thick] (3) -- (6);
      \draw[black, thick] (4) -- (5);
      \draw[black, thick] (4) -- (6);
    
      \node at (0,-2.2) {(e) The graph $K_{2,2,2}$};

    \end{tikzpicture}
    }
    \end{subfigure}
    \begin{subfigure}
    {
    \begin{tikzpicture}[
      node distance=2cm,
      thick,
      main node/.style={circle, draw, fill=black, minimum size=10pt, inner sep=0pt},
      scale=1.0
    ]
    
      \node[main node, fill=blue] (1) at (0,1.5) {};
      \node[main node, fill=blue] (2) at (0,0.5) {};
      \node[main node, fill=blue] (3) at (0,-0.5) {};
      \node[main node, fill=blue] (4) at (0,-1.5) {};
      \node[main node, fill=blue] (5) at (0:1.5) {};
      \node[main node, fill=blue] (6) at (180:1.5) {};
    
      \draw[black, thick] (1) -- (2);
      \draw[black, thick] (2) -- (3);
      \draw[black, thick] (3) -- (4);
      \draw[black, thick] (5) -- (1);
      \draw[black, thick] (5) -- (2);
      \draw[black, thick] (5) -- (3);
      \draw[black, thick] (5) -- (4);
      \draw[black, thick] (6) -- (1);
      \draw[black, thick] (6) -- (2);
      \draw[black, thick] (6) -- (3);
      \draw[black, thick] (6) -- (4);
    
      \node at (0,-2.2) {(f) The graph $2K_1 + P_4$};

    \end{tikzpicture}
    }
    \end{subfigure}
    \caption{The family of forbidden subgraphs $\mathcal{F}$} \label{Fig: forbidden subgraphs}
\end{figure*}

\begin{theorem}[McCullough \cite{McCullough93}, c.f. Theorem 15 of \cite{Laurent01}] \label{thm: McCullough}
    Let $G$ be a graph. The following statements are equivalent:
    \begin{enumerate}
        \item $\text{ord}(G^{(m)}) \leq 2$ for every $m\geq 1$.
        \item $\text{ord}(G^{(2)}) \leq 2$. 
        \item $G$ is a clique sum of induced subgraphs of the graph $W_4^{(n)}$ for $n \geq 1$.
        \item $G$ does not contain an induced subgraph from the family $\mathcal{F}$, i.e. $G$ is 2-chordal.
    \end{enumerate}
\end{theorem}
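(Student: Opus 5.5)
The plan is to prove the cycle $(1)\Rightarrow(2)\Rightarrow(4)\Rightarrow(3)\Rightarrow(1)$. The implication $(1)\Rightarrow(2)$ is immediate by taking $m=2$. Two elementary facts about the sparsity order will be used repeatedly.

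First, $\text{ord}$ is monotone under induced subgraphs. If $H=G(U)$, then an extreme ray of $\mathcal{P}_H$, padded with zeros, is still an extreme ray of $\mathcal{P}_G$. Indeed, any decomposition of the padded matrix in $\mathcal{P}_G$ has summands whose diagonal vanishes outside $U$, so by positivity the summands are supported on $U$.

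Second, clique blowup commutes with induced subgraphs and with clique sums:
\[ G(U)^{(m)}=G^{(m)}(U^{(m)}), \qquad (G_1\vee_K G_2)^{(m)}=G_1^{(m)}\vee_{K^{(m)}}G_2^{(m)}, \qquad (W_4^{(n)})^{(m)}=W_4^{(nm)}. \]

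For $(2)\Rightarrow(4)$ I argue by contraposition. If $G$ contains some $F\in\mathcal{F}$ as an induced subgraph, then $F^{(2)}$ is an induced subgraph of $G^{(2)}$. By monotonicity it then suffices to show $\text{ord}(F^{(2)})\geq 3$ for each of the six graphs. For $C_n$ with $n\geq 5$ this is already true without blowing up, since $\text{ord}(C_n)=n-2\geq 3$. For the other five graphs I would exhibit explicit rank-$3$ extreme points. I will use the Gram-vector criterion: a PSD matrix $X=(\langle u_j,u_i\rangle)$ with $u_i\in\mathbb{C}^r$ spanning $\mathbb{C}^r$ lies on an extreme ray of $\mathcal{P}_G$ iff the only Hermitian $Y\in M_r$ with $\langle Yu_j,u_i\rangle=0$ for all non-edges $(i,j)$ are the scalar multiples of the identity. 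For each forbidden graph, I choose vectors in $\mathbb{C}^3$ indexed by the vertices of $F^{(2)}$ that are orthogonal across non-edges and generic enough that this linear system has a one-dimensional solution space. This is a finite check per graph.

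For $(3)\Rightarrow(1)$, the commutation identities reduce everything to two claims. The first claim is that $\text{ord}(G_1\vee_K G_2)=\max(\text{ord}(G_1),\text{ord}(G_2))$. I prove it by the standard chordal-type argument: an extreme matrix on a clique sum splits, through a Schur-complement/completion argument over the clique separator, into extreme pieces on each side. The second claim is that $\text{ord}(W_4^{(N)})\leq 2$ for all $N$. Write $W_4^{(N)}=K_N+C_4^{(N)}$, with the blocks of $C_4^{(N)}$ labelled $A,B,C,D$ cyclically. Its only non-edges are the complete bipartite pairs $A\times C$ and $B\times D$. Hence, in a Gram representation of an extreme matrix of rank $r$, we have $\text{span}(u_A)\perp\text{span}(u_C)$ and $\text{span}(u_B)\perp\text{span}(u_D)$. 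Any Hermitian $Y$ that is block-diagonal with respect to both orthogonal splittings satisfies the constraints. Using this, I show that the solution space has dimension at least $2$ unless $r\leq 2$.

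The main obstacle is $(4)\Rightarrow(3)$, which is purely structural. I would decompose $G$ by clique separators into atoms, i.e. induced subgraphs with no clique separator. $G$ is then a clique sum of its atoms, and each atom is still $\mathcal{F}$-free. It remains to show that an $\mathcal{F}$-free atom that is not a clique is an induced subgraph of some $W_4^{(n)}$. Freedom from $C_n$ ($n\geq5$) together with the absence of a clique separator forces an induced $C_4$. I would then classify every other vertex by its adjacency to this $C_4$. Excluding $K_{2,3}$, $K_{2,2,2}$, $2K_1+P_4$, $\overline{C_6}$ and $\overline{P_6}$ should force each vertex either to be a twin of a cycle vertex (giving the blowup) or to be adjacent to everything (giving the hub clique). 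This case analysis is where I expect most of the work.
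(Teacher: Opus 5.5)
The paper gives no proof of this theorem. It is quoted from McCullough and Laurent, and the only part used later is $(4)\Rightarrow(3)$, in Theorem~\ref{thm: 2-chordal implies C*-nuclear}. So your argument has to stand on its own, and much of it does.

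$(1)\Rightarrow(2)$ is trivial. $(3)\Rightarrow(1)$ is correct once two points are sharpened:
\begin{itemize}
\item \emph{Clique sums.} Take the generalized Schur complement of the $V_1\setminus K$ block; this is legitimate because the $(V_1\setminus K)\times(V_2\setminus K)$ block is zero. It gives $\mathcal{P}_{G_1\vee_K G_2}=\mathcal{P}_{G_1}+\mathcal{P}_{G_2}$, so an extreme ray lies entirely in one summand. It does not ``split into extreme pieces''.
\item \emph{The bound for $W_4^{(N)}$.} Jordan's lemma (Halmos' two-projection theorem) gives an orthogonal decomposition of $\mathbb{C}^r$ into subspaces of dimension at most $2$ reducing both $P_A$ and $P_B$. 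So for $r\geq 3$ the Hermitian part of $\{P_A,P_B\}'$, which lies in your solution space, is at least two-dimensional.
\end{itemize}

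In $(2)\Rightarrow(4)$ the five Gram configurations are promised rather than produced. They do exist. For $K_{2,2,2}$ the blowup is genuinely needed: three non-edges impose only $6$ real conditions on the $9$-dimensional space of Hermitian $3\times 3$ matrices. A working choice for $K_{2,2,2}^{(2)}$ is as follows.
\begin{itemize}
\item Take three independent, pairwise non-orthogonal lines $\ell_1,\ell_2,\ell_3\subseteq\mathbb{C}^3$.
\item Put both copies of one vertex of the $i$-th part on $\ell_i$, and let the two copies of the other vertex span $\ell_i^{\perp}$.
\end{itemize}
Every admissible $Y$ then has $\ell_1,\ell_2,\ell_3$ as eigenlines, hence is scalar.

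The genuine gap is $(4)\Rightarrow(3)$. This is the real combinatorial content of the theorem and exactly the direction the paper needs. Your mechanism is to fix an induced $C_4$, $Q=(a,b,c,d)$, classify each vertex by its neighbourhood on $Q$, and use the five non-cycle graphs of $\mathcal{F}$ to force it to be a twin of a cycle vertex or a hub. That is false as a local statement: $C_4$ plus a vertex adjacent to exactly $a$ and $b$ (the house) is $\mathcal{F}$-free, and so is $C_4$ plus a vertex adjacent only to $a$.

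Local exclusion gives only two things.
\begin{enumerate}
\item The set $M$ of vertices whose $Q$-neighbourhood is $\{q-1,q,q+1\}$ for some $q$, or all of $Q$, induces a subgraph of $W_4^{(n)}$:
  \begin{itemize}
  \item twins of the same type are adjacent, by $K_{2,3}$;
  \item twins of adjacent cycle vertices are adjacent, by $\overline{P_6}$;
  \item twins of opposite cycle vertices are non-adjacent, and hubs are adjacent to all twins, by $2K_1+P_4$;
  \item hubs are pairwise adjacent, by $K_{2,2,2}$.
  \end{itemize}
\item Every vertex outside $M$ meets $Q$ in $\emptyset$, a single vertex, or an edge, since a diagonal pair such as $\{a,c\}$ gives $K_{2,3}$.
\end{enumerate}

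To conclude you need a global use of the atom hypothesis. For each component $Z$ of $G-M$, show that $N(Z)\subseteq M$ is a clique. It cannot contain $Q$, so it is then a clique separator unless $G=M$. This step is where the remaining forbidden graphs enter:
\begin{itemize}
\item $\overline{C_6}$ handles two adjacent outside vertices attached to opposite edges of $Q$;
\item induced $C_n$, $n\geq 5$, handle longer detours through $Z$ between non-adjacent vertices of $M$;
\item you must re-select the reference $C_4$ inside $M$ when $Z$ attaches to twins rather than to $Q$ itself.
\end{itemize}
None of this is in your sketch. As it stands, $(4)\Rightarrow(3)$ is not established, and neither is the closing of your cycle of implications.
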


\vspace{6pt}

Next, we recall several positive matrix completion problems alluded to in the introduction and review a few operator-theoretic tools that will be used in the paper. We begin with the positive definite matrix completion problem of \cite{Grone84}.

\begin{problem} \label{prob: scalar matrix completion}
    Let $G=(V,E)$ be a graph and $\Lambda = (\lambda_{i,j})_{(i,j) \in E}$ a partially-defined matrix with entries in $\mathbb{C}$. If every complete submatrix of $\Lambda$ is positive definite, does there exist a positive definite completion $\widetilde{\Lambda} \in M_n$?
\end{problem}

\noindent Grone, Horn, S\'a and Wolkowicz proved in \cite{Grone84} that Problem \ref{prob: scalar matrix completion} has a positive solution whenever $G$ is chordal. On the other hand, if $G=(V,E)$ is not chordal, then there exist partially-defined matrices $(\lambda_{i,j})_{(i,j) \in E}$ for which every complete submatrix is positive definite and yet no positive definite completion exists. This result generalizes an earlier result of Dym and Gohberg \cite{DymGohberg81} concerning matrices for which finitely many diagonal bands are specified. Note that here positive definiteness can be replaced with positive \emph{semi}-definiteness by applying the positive definite version of the result to the partially-defined matrix $\Lambda + \epsilon I_n$ for $\epsilon > 0$ and appealing to a compactness argument.

We now consider an operator-valued analog of this problem. Throughout the paper, we let $\mathcal{H}$ denote a Hilbert space and we let $B(\mathcal{H})$ denote the algebra of bounded operators on $\mathcal{H}$. We identify operators in $M_n(B(\cc H))$ with operators on the $n$-fold direct sum $\cc H^{(n)}$ in the standard way.

\begin{problem} \label{prob: operator matrix completion}
    Let $G = (V,E)$ be a graph and $T = (T_{i,j})_{(i,j) \in E}$ be a partially-defined matrix with values in $B(\cc H)$. If every complete submatrix of $T$ is positive semi-definite, does there exist a positive semi-definite completion $\widetilde{T} \in B(\cc H^{(n)})$?
\end{problem}

\noindent In \cite[Theorem 4.3]{PPS89} it was shown that Problem \ref{prob: operator matrix completion} has a positive solution whenever $G$ is chordal. When $G$ is not chordal, counter-examples can be constructed by considering matrices where every entry is a scalar multiple of the identity operator and appealing to the scalar version of the problem. The C*-algebra-valued completion problems were considered in \cite{PaulsenRodman}. Recall that a \textit{C*-algebra} is a norm-closed self-adjoint subalgebra of $B(\cc H)$.

\begin{problem} \label{prob: C* matrix completion}
    Let $G = (V,E)$ be a graph and $A = (a_{i,j})_{(i,j) \in E}$ be a partially-defined matrix with entries in a unital C*-algebra $\cc A$. If every complete submatrix of $A$ is positive definite, does there exist a positive definite completion $\widetilde{A} \in M_n(\cc A)$?
\end{problem}

\noindent In \cite[Theorem 2.1]{PaulsenRodman}, it is shown that Problem \ref{prob: C* matrix completion} has a positive solution whenever $G$ is chordal, and again that counter-examples arise when $G$ is not chordal. In this level of generality, however, positive definiteness (i.e. the requirement that the completed matrix be bounded below by $\epsilon I_n$ for some $\epsilon > 0$) cannot be replaced with positive semi-definiteness. More precisely, \cite[Example 3.1]{PaulsenRodman} provides an example of a $C(\mathbb{T})$-valued partially defined matrix associated to a chordal graph which has no $C(\mathbb{T})$-valued positive semi-definite completion, despite having an $L^{\infty}(\mathbb{T})$-valued positive semi-definite completion. In \cite[Theorem 3.1]{PaulsenRodman}, it is shown that if the C*-algebra $\cc A$ is \textit{injective}, or if $\cc A$ is a \textit{von Neumann algebra} (i.e. $\cc A$ is equal to its double-commutant $\cc A''$), then positive definiteness can be replaced with positive semi-definiteness. More generally, we can say the following about positive semi-definite completions of C*-algebra-valued matrices.

\begin{proposition} \label{prop: psd completion in A''}
    Let $G=(V,E)$ be a graph with $n$ vertices, $\mathcal{A} \subseteq B(\cc H)$ be a unital C*-algebra, and $A = (a_{i,j})_{(i,j) \in E}$ be a partially-defined matrix with entries in $\cc A$. If $A + \epsilon I_n$ has a positive semi-definite completion in $M_n(\cc A)$ for every $\epsilon > 0$, then $A$ has a positive semi-definite completion in $M_n(\cc A'')$.
\end{proposition}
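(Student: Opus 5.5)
The plan is a weak$^*$-compactness argument in $M_n(\cc A'')$. For each $\epsilon = 1/k$, fix a positive semi-definite completion $\widetilde{A}_k \in M_n(\cc A)$ of $A + \frac1k I_n$. Its $(i,j)$ entries agree with $a_{i,j}$ for $(i,j)\in E$ and $i \neq j$, and its diagonal entries are $a_{i,i} + \frac1k$. The remaining entries are free and must be controlled.

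The first step is a uniform norm bound. A positive operator matrix $P = (p_{i,j}) \geq 0$ satisfies $\|p_{i,j}\|^2 \leq \|p_{i,i}\|\,\|p_{j,j}\|$, because each $2\times 2$ principal submatrix is positive. Applied to $\widetilde{A}_k$, this gives every entry norm at most $\max_i \|a_{i,i}\| + 1$. Hence $\|\widetilde{A}_k\| \leq n(\max_i\|a_{i,i}\|+1)$ uniformly in $k$.

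The second step extracts a limit. Identify $M_n(\cc A'') \subseteq B(\cc H^{(n)})$; it equals $(M_n(\cc A))''$, so it is a von Neumann algebra. By the von Neumann double commutant theorem, $\cc A''$ is weak$^*$-closed (equivalently WOT-closed) and contains the weak$^*$-closure of $\cc A$. The sequence $(\widetilde{A}_k)$ lies in a ball of $B(\cc H^{(n)})$, which is weak$^*$-compact by Banach--Alaoglu. So there is a subnet converging weak$^*$ to some $\widetilde{A}$, and $\widetilde{A} \in M_n(\cc A'')$ since each entry is a weak$^*$ limit of elements of $\cc A$.

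The third step verifies the required properties of $\widetilde{A}$:
\begin{enumerate}
    \item Positivity passes to weak$^*$ limits, since $\langle \widetilde{A}_k \xi, \xi\rangle \geq 0$ for every $\xi \in \cc H^{(n)}$. Thus $\widetilde{A} \geq 0$.
    \item Entrywise, weak$^*$ convergence gives $(\widetilde{A})_{i,j} = a_{i,j}$ for $(i,j) \in E$, $i\neq j$.
    \item On the diagonal, $(\widetilde{A})_{i,i} = \lim_k (a_{i,i} + \frac1k) = a_{i,i}$.
\end{enumerate}
Hence $\widetilde{A}$ is a positive semi-definite completion of $A$ in $M_n(\cc A'')$.

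I expect no real obstacle. The only points needing care are the uniform bound on the unspecified entries and the fact that $\cc A''$ is closed in the topology used. If $\cc A$ does not act nondegenerately, $\cc A''$ still contains the identity, and the argument is unaffected.
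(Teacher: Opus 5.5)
Your proposal is correct and follows the paper's argument, which takes an ultraweak limit point of the completions in $M_n(\mathcal A'')$; you also make explicit the uniform norm bound via $2\times 2$ principal submatrices and the preservation of positivity and of the specified entries, which the paper leaves implicit. One harmless slip: the identity $M_n(\mathcal A'')=(M_n(\mathcal A))''$ can fail when $\mathcal A$ does not contain $1_{\mathcal H}$, but you only need $M_n(\mathcal A'')$ to be weak$^*$-closed, and that holds in general because commutants are always weak$^*$-closed.
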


\begin{proof}
    Observe that the set of completions $\{\widetilde{A}_{\epsilon}\}$ of $A + \epsilon I_n$ lie in an ultraweakly compact subset of $\cc A''$. A limit point of this family will therefore be a positive semi-definite completion for $A$.
\end{proof}

We now state the matrix completion problem that is of main interest for this paper.

\begin{problem} \label{prob: main prob}
    Let $G=(V,E)$ be a graph on $n$ vertices and $A = (a_{i,j})_{(i,j) \in E}$ be a partially-defined matrix with values in a unital C*-algebra $\cc A \subseteq B(\cc H)$. If $A + \epsilon I_n$ has a $B(\cc H)$-valued positive definite completion for every $\epsilon > 0$, does $A + \epsilon I_n$ have a $\cc A$-valued positive definite completion for every $\epsilon > 0$?
\end{problem}

\noindent Clearly if a partially-defined matrix $A$ has a positive definite $B(\cc H)$-valued completion, then every complete submatrix of $A$ will be positive definite. Thus the hypothesis of Problem \ref{prob: main prob} is stronger than the hypothesis of Problem \ref{prob: C* matrix completion}. It follows from \cite{PaulsenRodman} that Problem \ref{prob: main prob} has a positive solution whenever $G$ is chordal. The main result of the paper will be to characterize the set of graphs $G$ for which Problem \ref{prob: main prob} has a positive solution. We will see that this class of graphs is strictly larger than the class of chordal graphs. We will apply the following standard Schur complement criteria later in the paper to address matrix completion questions, and their proofs can be found in \cite[Lemma 3.1]{Pnbook} and \cite[Lemma 24]{Oz}.

\begin{lemma} \label{lem: matrix completion and schur}
    Let $\cc H_0, \cc \mathcal{H}_1, \cc \mathcal{H}_2$ be Hilbert spaces. The following statements are true.
    \begin{enumerate}
        \item Suppose that $B \in B(\cc \mathcal{H}_1)$ is positive definite, $A \in B(\cc \mathcal{H}_0)$ and $X \in B(\cc \mathcal{H}_1, \cc \mathcal{H}_0)$. Then
        \[ 
        \begin{pmatrix} A & X \\ X^* & B \end{pmatrix} \in B(\cc \mathcal{H}_0 \oplus \cc \mathcal{H}_1) 
        \]
        is positive semi-definite if and only if $A \geq X B^{-1} X^*$.
        \item Suppose that the operator matrices
        \[ 
        \begin{pmatrix} A & X \\ X^* & B \end{pmatrix} \in B(\mathcal{H}_0 \oplus \mathcal{H}_1) \quad \text{and} \quad \begin{pmatrix} B & Y \\ Y^* & C \end{pmatrix} \in B(\mathcal{H}_1 \oplus \mathcal{H}_2) 
        \] 
        are both positive definite. Then the matrix
        \[ 
        \begin{pmatrix} A & X & Z \\ X^* & B & Y \\ Z^* & Y^* & C \end{pmatrix} \in B(\mathcal{H}_0 \oplus \mathcal{H}_1 \oplus \mathcal{H}_2) 
        \] 
        is positive for $Z = X B^{-1} Y$.
    \end{enumerate}
\end{lemma}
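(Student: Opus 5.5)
The two parts of the lemma are independent, and I will handle both by factoring out the positive definite operator $B$.

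\emph{Part (1).} Since $B$ is positive definite, it is invertible with $B^{-1} \geq 0$, and $B^{1/2}$ is invertible as well. I use the block upper-triangular invertible operator
\[ S = \begin{pmatrix} I & -XB^{-1} \\ 0 & I \end{pmatrix}, \]
which conjugates the block matrix into a block-diagonal one:
\[ S \begin{pmatrix} A & X \\ X^* & B \end{pmatrix} S^* = \begin{pmatrix} A - XB^{-1}X^* & 0 \\ 0 & B \end{pmatrix}. \]
Checking this is a direct multiplication: the off-diagonal entries are $X - XB^{-1}B = 0$, and the corner is $A - XB^{-1}X^* - XB^{-1}X^* + XB^{-1}BB^{-1}X^*$.

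Congruence by an invertible operator preserves positive semi-definiteness in both directions. So the original matrix is positive semi-definite if and only if the diagonal one is. Because $B \geq 0$, the diagonal matrix is positive semi-definite if and only if $A - XB^{-1}X^* \geq 0$. This gives part (1).

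\emph{Part (2).} I write $M$ for the $3\times 3$ matrix with $Z = XB^{-1}Y$, and conjugate by the invertible operator
\[ T = \begin{pmatrix} I & -XB^{-1} & 0 \\ 0 & I & 0 \\ 0 & -Y^*B^{-1} & I \end{pmatrix}. \]
This kills the off-diagonal entries in the middle row and column. The $(1,3)$ entry of $TMT^*$ becomes
\[ Z - XB^{-1}Y - XB^{-1}Y + XB^{-1}BB^{-1}Y = Z - XB^{-1}Y = 0. \]
Hence
\[ TMT^* = \mathrm{diag}\bigl(A - XB^{-1}X^*,\; B,\; C - Y^*B^{-1}Y\bigr). \]

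By part (1) applied to the first hypothesis, $A - XB^{-1}X^* \geq 0$. For the third entry I apply part (1) to the second $2\times 2$ matrix after swapping its block order, which is a unitary conjugation. With $B$ now in the lower corner, part (1) yields $C \geq Y^*B^{-1}Y$. Therefore $TMT^*\geq 0$, and so $M \geq 0$.

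\emph{Main obstacle.} The only delicate point is the bookkeeping in the $(1,3)$ entry of the congruence. This entry is exactly where the choice $Z = XB^{-1}Y$ is forced, so it is the computation to check carefully. Everything else follows directly from part (1) and the invertibility of $B$.
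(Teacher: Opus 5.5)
Your proof is correct: $S$ and $T$ are invertible, the block computations check out (including the $(1,3)$ entry), and swapping the blocks of the second $2\times 2$ matrix correctly reduces $C \geq Y^*B^{-1}Y$ to part (1). The paper gives no proof of its own and defers to \cite[Lemma 3.1]{Pnbook} and \cite[Lemma 24]{Oz}; your congruence argument is the standard Schur-complement proof of these facts, so it matches what the paper relies on.
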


\vspace{6pt}

We conclude with a summary of operator system theory relevant to this work. An \textit{operator system} is a unital self-adjoint subspace of a C*-algebra. Operator systems were abstractly characterized by Choi and Effros \cite{CEinj} as $*$-vector spaces $\cc S$ equipped with a sequence of positive cones $\{M_n(\cc S)^+\}_{n=1}^{\infty}$ in each $n\times n$ matrix amplification $M_n(\cc S)$, called a \textit{matrix ordering}, and with a specified Archimedean order unit $I \in \cc S$. Therefore the relevant morphisms between operator systems are linear maps preserving the unit, involution and the matrix ordering. More precisely, given operator systems $\cc S$ and $\cc T$, a unit-preserving linear map $\varphi: \cc S \to \cc T$ which preserves the matrix cones is called \textit{unital completely positive} (or UCP for short), where the latter condition asks for the maps $\varphi^{(n)}: M_n(\cc S) \to M_n(\cc T)$ to be positive for every $n \in \mathbb{N}$, where $\varphi^{(n)}$ is the map applying $\varphi$ to the entries of each matrix in $M_n(\cc S)$. A unital completely positive map $\varphi: \cc S \to \cc T$ is said to be a \textit{unital complete order embedding} if it is injective and if $\varphi^{-1}$ is completely positive when restricted to the image $\varphi(\cc S)$. We may therefore view operator systems abstractly as matrix-ordered $*$-vector spaces $\cc S$ with an Archimedean order unit for which there exists a unital complete order embedding $\varphi: \cc S \to \cc A$ into some unital C*-algebra $\cc A$.

The tensor theory of operator systems was initiated in \cite{KPTT1} by Kavruk, Paulsen, Todorov and Tomforde. In this paper we will be concerned with three tensor products developed in \cite{KPTT1}, namely the \textit{minimal}, \textit{maximal}, and \textit{commuting} tensor product. For the sake of brevity, we will only summarize the relevant definitions and refer the interested reader to \cite{KPTT1} for further details. Let $\cc S \subseteq B(\cc \mathcal{H}_1)$ and $\cc T \subseteq B(\cc \mathcal{H}_2)$ be operator systems viewed concretely as untial self-adjoint subspaces of bounded operators. Then the minimal tensor product $\cc S \otimes_{min} \cc T$ is the operator system generated by the inclusion of the algebraic tensor product $\cc S \otimes \cc T$ into $B(\mathcal{H}_1 \otimes \mathcal{H}_2)$. This operator system turns out to be independent of the choice of complete order embedding of $\cc S$ and $\cc T$ into $B(\mathcal{H}_1)$ and $B(\mathcal{H}_2)$, respectively. The maximal tensor product $\cc S \otimes_{max} \cc T$ is the operator system with the smallest matrix ordering generated by the positive operators $\{ \ P \otimes Q \  : \  P \in M_n(\cc S), Q \in M_m(\cc T), \ n,m \in \mathbb{N} \ \}$. This operator system endows the vector space $\cc S \otimes \cc T$ with the largest possible operator norm that can be embedded (in an appropriate sense) as a unital subspace of a C*-algebra. When $\cc S$ and $\cc T$ are C*-algebras, the maximal tensor product $\cc S \otimes_{max} \cc T$ coincides with the image of the algebraic tensor product $\cc S \otimes \cc T$ in the C*-algebra maximal tensor product of $\cc S$ and $\cc T$. Finally, the \textit{commuting tensor product} $\cc S \otimes_c \cc T$ is the unique operator system structure on the algebraic tensor product $\cc S \otimes \cc T$ which is universal for unital completely positive maps with commuting ranges. That is, if $\varphi: \cc S \to B(\cc H)$ and $\psi: \cc T \to B(\cc H)$ are unital completely positive maps with commuting ranges, and $\varphi \cdot \psi : \cc S \otimes_c \cc T \to B(\cc H)$ is the linear map defined on elementary tensors by $\varphi \cdot \psi(x \otimes y) = \varphi(x) \psi(y)$, then $\varphi \cdot \psi$ is unital completely positive.

An operator system $\cc S$ is called \textit{nuclear} if $\cc S \otimes_{min} \cc T = \cc S \otimes_{max} \cc T$ for every operator system $\cc T$, and is called \textit{(min,c)-nuclear} if $\cc S \otimes_{min} \cc T = \cc S \otimes_{c} \cc T$ for every operator system $\cc T$. It is called \textit{C*-nuclear} if $\cc S \otimes_{min} \cc A = \cc S \otimes_{max} \cc A$ (equivalently, $\cc S \otimes_{min} \cc A = \cc S \otimes_{c} \cc A$) for every C*-algebra $\cc A$. In \cite{KPTT1} it was shown that an operator system is (min,c)-nuclear if and only if it is C*-nuclear.

We will be primarily concerned with the class of \textit{graph operator systems}. Given a graph $G=(V,E)$ with vertex set $V=[n]$, we define the graph operator system \[ \cc S_G := \text{span} \{ E_{i,j} : (i,j) \in E \} \subseteq M_n \] where $E_{i,j}$ denotes the matrix with a 1 in the $(i,j)$ entry and zeros elsewhere. The study of graph operator systems was initiated in \cite{KPTT1} where the following theorem was proven.

\begin{theorem}
    Suppose that $G$ is a graph. If $G$ is chordal, then $\cc S_G$ is C*-nuclear. On the other hand, there exists a chordal graph $G$ such that $\cc S_G$ is not nuclear.
\end{theorem}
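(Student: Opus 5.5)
The statement has two parts: C*-nuclearity of $\cc S_G$ for chordal $G$, and the existence of a chordal graph whose operator system is not nuclear.

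\textbf{Chordal $\Rightarrow$ C*-nuclear.} Since C*-nuclearity is equivalent to $(\min,c)$-nuclearity for C*-algebras, it suffices to prove $\cc S_G \otimes_{min} \cc A = \cc S_G \otimes_c \cc A$ for every unital C*-algebra $\cc A \subseteq B(\cc H)$.

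First I identify the two cones. Every element of $M_k(\cc S_G \otimes \cc A)$ can be written as $\sum_{(i,j)\in E} E_{i,j}\otimes a_{i,j}$ with $a_{i,j}\in M_k(\cc A)$. Since $\cc S_G\subseteq M_n$, the minimal tensor product embeds in $M_n(M_k(\cc A))$. So such an element is positive in $\cc S_G\otimes_{min}\cc A$ exactly when the corresponding $M_k(\cc A)$-valued matrix, with zeros off the pattern $E$, is positive. Replacing $\cc A$ by $M_k(\cc A)$, I may take $k=1$.

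For the commuting (equivalently maximal) side, I use the dual description of the graph operator system: $\cc S_G$ is the operator system quotient/dual of $M_n$ modulo the span of $\{E_{i,j} : (i,j)\notin E\}$. The key observation is that an element $X=\sum_{E} E_{i,j}\otimes a_{i,j}$ lies in the max cone, up to $\epsilon$ perturbation, whenever the partially defined matrix $(a_{i,j})_{(i,j)\in E}$ has a positive completion in $M_n(\cc A)$. I verify this via Paulsen–Rodman. By Dirac's theorem, a chordal $G$ is a clique sum of cliques. Using Lemma \ref{lem: matrix completion and schur} (2) inductively along a perfect elimination ordering, one can then decompose $X + \epsilon I$ as a sum $\sum_C P_C$. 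Here each $P_C$ is a positive element of $M_{|C|}(\cc A)$ supported on a maximal clique $C$.

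Each block $E_C\otimes$-supported positive matrix lies in $M_{|C|}\otimes_{max}\cc A$. That algebra sits inside $\cc S_G$ as the corner $M_C$, and the inclusion is UCP-compatible. So each block lies in the max cone of $\cc S_G\otimes\cc A$, hence so does the sum. Letting $\epsilon\to 0$ and using the Archimedean property gives min $=$ max.

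The decomposition step is where I expect the main difficulty. Schur-complement splittings require positive \emph{definite} blocks so that inverses exist in $\cc A$; this is why I perturb by $\epsilon I$. The perturbation keeps every Schur complement invertible in $\cc A$ and avoids the $C(\mathbb T)$ pathology. The splitting itself is the standard chordal one. Take a simplicial vertex $v$ with clique neighbourhood $N$. Subtract the positive $\{v\}\cup N$-block
\[
\begin{pmatrix} a_{vv} & x\\ x^* & x^*a_{vv}^{-1}x\end{pmatrix}.
\]
The remainder on $G\setminus v$ stays positive definite (minus a small margin), and I induct.

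\textbf{A non-nuclear example.} Take $G=P_3$, which is chordal. Then $\cc S_{P_3}$ is a $7$-dimensional operator system. By Han–Paulsen, a finite-dimensional nuclear operator system is completely order isomorphic to a C*-algebra. A $7$-dimensional C*-algebra is either commutative, i.e.\ $\mathbb C^7$, or of the form $M_2\oplus\mathbb C^3$. Neither is completely order isomorphic to $\cc S_{P_3}$.

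To rule these out, I compare extreme points of the state space, or equivalently the structure of the C*-envelope. The C*-envelope of $\cc S_{P_3}$ is $M_3$, since $\cc S_{P_3}$ contains the matrix units generating $M_3$ and the boundary ideal is trivial. A C*-algebra is its own envelope. So an isomorphism would force $M_3\cong\mathbb C^7$ or $M_3\cong M_2\oplus\mathbb C^3$, which is absurd by dimension. Hence $\cc S_{P_3}$ is not nuclear.
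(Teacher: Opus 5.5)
Your proof is correct in substance, but the sentence you call the key observation is misstated, and as written it is false. A positive completion of the partial matrix $(a_{i,j})_{(i,j)\in E}$ says nothing about the zero-filled element $X=\sum_{(i,j)\in E}E_{i,j}\otimes a_{i,j}$. For example, take $G=P_3$, $\mathcal A=\mathbb C$ and every specified entry equal to $1$. The all-ones matrix is a positive completion, yet $X$ has eigenvalue $1-\sqrt2$, so $X+\epsilon I$ is not even min-positive for small $\epsilon$.

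What your elimination actually proves, and all you need, is the primal statement: if $X\ge 0$ in $M_n(\mathcal A)$, then $X+\epsilon I$ is a finite sum of positive matrices supported on cliques. Paulsen--Rodman plays no role here. With $a$ the $(v,v)$ entry of $X+\epsilon I$, you only use the following facts:
\begin{itemize}
\item $a$ is invertible;
\item $\begin{pmatrix} a & x\\ x^* & x^*a^{-1}x\end{pmatrix}\ge 0$;
\item $x^*a^{-1}x$ is supported on the clique $N$, so the remainder keeps the pattern of $G\setminus v$;
\item the Schur complement of a matrix $\ge\epsilon$ is again $\ge\epsilon$, since its inverse is a corner of $(X+\epsilon I)^{-1}\le\epsilon^{-1}$.
\end{itemize}

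Two smaller repairs are needed. First, $\mathcal S_G$ is a subsystem of $M_n$; it is its dual, not $\mathcal S_G$ itself, that is the quotient of $M_n$ by the span of the non-edge matrix units. Second, the corner inclusion $M_C\hookrightarrow\mathcal S_G$ is completely positive but not unital, so calling it UCP-compatible does not suffice. Instead, note directly that a clique-supported $[q_{rs}]\ge 0$ equals $\alpha(P\otimes Q)\alpha^*$, where $P=[E_{c_r,c_s}]_{r,s}\in M_{|C|}(\mathcal S_G)^+$, $Q=[q_{rs}]$, and $\alpha$ is the scalar row picking out the diagonal index pairs. Hence it lies in $D^{max}$, and Archimedeanization then disposes of $\epsilon$.

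With that fix, your argument for chordal graphs takes a genuinely different route from the one the paper relies on. The paper does not reprove the theorem; it cites \cite{KPTT1} and points to the dual mechanism. By \cite[Theorem 3.10]{AKRSx}, C*-nuclearity of $\mathcal S_G$ amounts to the following: every $\mathcal A$-valued partial matrix $(\varphi(E_{i,j}))$ with a $B(\mathcal H)$-valued positive completion admits approximate $\mathcal A$-valued positive completions. \cite{PaulsenRodman} supplies these completions for chordal $G$.

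You instead prove $(\mathcal S_G\otimes_{min}\mathcal A)^+\subseteq(\mathcal S_G\otimes_{max}\mathcal A)^+$ head-on by a clique decomposition, which is the Hahn--Banach dual of that completion property. Your route is more elementary and self-contained: it needs no AKRS characterization, no Arveson extension and no completion theorem, and it yields min $=$ max directly. The completion route is the one that survives beyond chordal graphs, for instance for $C_4$, where no clique decomposition exists even for scalar matrices.

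For $P_3$ you follow the paper in using Han--Paulsen. Your C*-envelope argument supplies the justification the paper leaves implicit: $C^*_e(\mathcal S_{P_3})=M_3$ because $M_3$ is simple, while a C*-algebra is its own envelope. It also makes your enumeration of $7$-dimensional C*-algebras unnecessary, since $\dim M_3=9\neq 7$.
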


\noindent In particular, it was shown that $\cc S_{P_3}$ is C*-nuclear but not nuclear (where $P_3$ is the 3-path). That $\cc S_{P_3}$ is not nuclear follows from a theorem of Han and Paulsen \cite{HP} that every nuclear finite-dimensional operator system is completely order isomorphic to a C*-algebra.

We will make use of a recent characterization of C*-nuclearity of graph operator systems from \cite{AKRSx}. Namely, if $\cc S_G$ is the graph operator system of a graph $G$, it was shown in \cite[Theorem 3.10]{AKRSx} that $\cc S_G$ is C*-nuclear if and only if Problem \ref{prob: main prob} has a positive solution for the graph $G$.

\section{Permanence of C*-nuclearity} \label{sec: Permanence}

In this section, we show that a variety of combinatorial graph constructions preserve the C*-nuclearity of the corresponding graph operator system. To simplify the terminology, we say that a graph $G$ is \textit{C*-nuclear} if the corresponding operator system $\cc S_G$ is C*-nuclear.

\begin{proposition} \label{prop: G+K_n C*-nuclear}
    Suppose that $G$ is C*-nuclear. Then $G + K_n$ is C*-nuclear for every $n \in \mathbb{N}$.
\end{proposition}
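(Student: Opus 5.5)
We argue through the characterization of \cite[Theorem 3.10]{AKRSx}: $\cc S_H$ is C*-nuclear if and only if Problem \ref{prob: main prob} has a positive solution for $H$. It suffices to treat $n=1$, since $G+K_n = (\cdots((G+K_1)+K_1)\cdots)+K_1$; in $G+K_1$ the new vertex is adjacent to everything, and iterating adds a clique joined to all of $G$. (Alternatively, one can handle a block vertex carrying $M_n(\cc A)$ directly; the argument below works verbatim with $\cc H$ replaced by $\cc H^{(n)}$.)

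Let $H=G+K_1$, with the new vertex labelled $0$ and listed first. Let $A=(a_{i,j})_{(i,j)\in E(H)}$ have entries in $\cc A\subseteq B(\cc H)$, and suppose that for each $\epsilon>0$ the matrix $A+\epsilon I$ has a positive definite completion $T_\epsilon\in M_{n+1}(B(\cc H))$. Since $0$ is adjacent to every vertex, the whole first row and column of $A$ is specified. Fix $\epsilon$ and write
\[ T_\epsilon=\begin{pmatrix} a_{0,0}+\epsilon & x\\ x^* & B\end{pmatrix}, \]
where $x=(a_{0,1},\dots,a_{0,n})$ has entries in $\cc A$. Also $a_{0,0}+\epsilon$ is invertible and positive, with inverse in $\cc A$ (assume $\cc A$ unital, as in Problem \ref{prob: main prob}).

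By Lemma \ref{lem: matrix completion and schur}(1), applied with the roles of the blocks swapped, positive definiteness of $T_\epsilon$ is equivalent to
\[ B - x^*(a_{0,0}+\epsilon)^{-1}x \]
being positive definite. Now consider the partially defined matrix on $G$ given by
\[ A'_\epsilon := (a_{i,j} - a_{0,i}^*(a_{0,0}+\epsilon)^{-1}a_{0,j})_{(i,j)\in E(G)}. \]
Its entries lie in $\cc A$, and $A'_\epsilon+\epsilon I_n$ has the $B(\cc H)$-completion $B-x^*(a_{0,0}+\epsilon)^{-1}x$, which is positive definite. Applying the same reasoning with $\epsilon$ replaced by $\epsilon/2$ and so on gives a strictly positive definite $B(\cc H)$-completion of $A'_\epsilon$ itself, plus a small margin.

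The main obstacle is that the hypothesis of Problem \ref{prob: main prob} for $G$ requires $A'+\delta I$ to be $B(\cc H)$-completable for \emph{every} $\delta>0$ with a single fixed $A'$, whereas $A'_\epsilon$ depends on $\epsilon$. To handle this, fix $\epsilon$ and set $A'=A'_\epsilon - \tfrac{\epsilon}{2}I_n$. Choose the completion $T_{\epsilon/2}$ and compare $(a_{0,0}+\epsilon)^{-1}\le (a_{0,0}+\epsilon/2)^{-1}$. Then $B_{\epsilon/2}+\tfrac{\epsilon}{2}I - x^*(a_{0,0}+\epsilon)^{-1}x$ is a positive definite completion of $A'_\epsilon$, bounded below by a multiple of the identity. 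Hence $A'_\epsilon - \eta I$ is $B(\cc H)$-completable to a positive definite matrix for some $\eta>0$, so $A'' := A'_\epsilon-\eta I$ satisfies the hypothesis of Problem \ref{prob: main prob} for every $\delta>0$.

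C*-nuclearity of $G$ then yields an $\cc A$-valued positive definite completion $C$ of $A''+\eta I = A'_\epsilon$. Finally set
\[ \widetilde A_\epsilon=\begin{pmatrix} a_{0,0}+\epsilon & x\\ x^* & C + x^*(a_{0,0}+\epsilon)^{-1}x\end{pmatrix}\in M_{n+1}(\cc A). \]
This completes $A+\epsilon I$: for $(i,j)\in E(G)$ the $(i,j)$ entry is $a_{i,j}+\epsilon\delta_{ij}$, since the correction term cancels. By Lemma \ref{lem: matrix completion and schur}(1) it is positive, and it is definite because $C$ is. Since $\epsilon$ was arbitrary, Problem \ref{prob: main prob} has a positive solution for $G+K_1$.
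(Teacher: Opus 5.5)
Your overall route matches the paper's: take the Schur complement over the apex clique, apply nuclearity of $G$ to the resulting partial matrix on $G$, and reassemble. Reducing to $K_1$ by iterating the join is fine. You are also right that Problem~\ref{prob: main prob} must be applied to a single fixed partial matrix satisfying the hypothesis for every $\delta>0$; the paper's proof leaves this implicit.

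Your resolution of that point fails, however. The claim that $A'_\epsilon-\eta I$ has a positive definite $B(\mathcal{H})$-completion for some $\eta>0$ is false in general. Take $A=0$ (all specified entries zero). It satisfies the hypothesis, since $\epsilon I$ completes $A+\epsilon I$. But then $a_{0,0}=0$, $x=0$ and $A'_\epsilon=0$, so $A'_\epsilon-\eta I$ has diagonal entries $-\eta$ and no positive completion.

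The computation with $T_{\epsilon/2}$ does not give what you say. The lower-right block $B_{\epsilon/2}$ of $T_{\epsilon/2}$ completes $(a_{i,j})_{E(G)}+\tfrac{\epsilon}{2}I_n$. Hence $B_{\epsilon/2}+\tfrac{\epsilon}{2}I-x^*(a_{0,0}+\epsilon)^{-1}x$ completes $A'_\epsilon+\epsilon I_n$, not $A'_\epsilon$. As the example shows, no margin below $A'_\epsilon$ exists in general. Your final bookkeeping is off for the same reason: if $C$ completes $A'_\epsilon$, the $G$-diagonal of $\widetilde A_\epsilon$ is $a_{i,i}$, not $a_{i,i}+\epsilon$.

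The fix uses your own monotonicity idea, applied to every $\delta\in(0,\epsilon]$ rather than only to $\delta=\epsilon/2$. For $0<\delta\le\epsilon$, let $B_\delta$ be the lower-right block of $T_\delta$. Since $(a_{0,0}+\epsilon)^{-1}\le(a_{0,0}+\delta)^{-1}$,
\[ B_\delta-x^*(a_{0,0}+\epsilon)^{-1}x\ \ge\ B_\delta-x^*(a_{0,0}+\delta)^{-1}x . \]
The right side is positive definite, being the Schur complement in the positive definite $T_\delta$. The left side completes $A'_\epsilon+\delta I_n$. For $\delta>\epsilon$, add $(\delta-\epsilon)I_n$ to the completion for $\delta=\epsilon$.

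So the fixed partial matrix $A'_\epsilon$ itself, with no shift, satisfies the hypothesis of Problem~\ref{prob: main prob} on $G$. Nuclearity of $G$ then gives an $\mathcal{A}$-valued positive definite completion $C$ of $A'_\epsilon+\epsilon I_n$. With this $C$, your matrix $\widetilde A_\epsilon$ completes $A+\epsilon I$. It is positive definite because it is congruent, via an invertible upper-triangular matrix, to $\operatorname{diag}(a_{0,0}+\epsilon,\,C)$.
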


\begin{proof}
    We will show the corresponding matrix completion problem has a solution. Let $\cA \subseteq B(\mathcal{H})$ be a C*-algebra. Suppose the partially defined matrix
    \[ 
    \begin{pmatrix} P_{?} & X \\ X^* & Q \end{pmatrix} + \epsilon I_{d+n} \in M_{d+n}(\cA) 
    \]
    has a positive completion in $B(\mathcal{H})$, where some entries of $P_? \in M_d(\cA)$ are unspecified. Let $P_0 \in M_d(B(\mathcal{H}))$ be the operator that completes the 1-1 block to yield a positive operator. Let $Q_{\epsilon} := Q + \epsilon I_n$. Conjugating by $I_d \oplus Q_{\epsilon}^{-1/2}$, we obtain the positive matrix
    \[ 
    \begin{pmatrix} P_0 + \epsilon I_{d} & X Q_{\epsilon}^{-1/2} \\ Q_{\epsilon}^{-1/2} X^* & I_n \end{pmatrix}. 
    \] 
    By Lemma \ref{lem: matrix completion and schur}, $X Q_{\epsilon}^{-1} X^* \leq P_0 + \epsilon I_{d}$. It follows that $P_0 + \epsilon I_d - X Q_{\epsilon}^{-1} X^*$ is a positive operator completion of the partially defined matrix $R \in M_d(\cA)$ obtained by forgetting the entries of $P_0 + \epsilon I_d - X Q_{\epsilon}^{-1} X^*$ which do not correspond to edges for $G$. Since $G$ is C*-nuclear, there exists a matrix $R_1 \in M_d(\cA)$ which is a positive completion of $R$. Let $P := R_1 + X Q_{\epsilon}^{-1} X^*$. Then $P + \epsilon I_d \in M_d(\cA)^+$, and $P + \epsilon I_d \geq X Q_{\epsilon}^{-1} X^*$, and $P$ is a positive completion of $P_{?} + \epsilon I_d$. By Lemma \ref{lem: matrix completion and schur} again,
    \[ \begin{pmatrix} P + \epsilon I_d & X Q_{\epsilon}^{-1/2} \\ Q_{\epsilon}^{-1/2} X^* & I_n \end{pmatrix} \geq 0. \]
    Conjugating by $I_d \oplus Q_{\epsilon}^{1/2}$, we obtain a positive completion
    \[ \begin{pmatrix} P & X \\ X^* & Q \end{pmatrix} + \epsilon I_{d+n} \in M_{d+n}(\cA)^+ \]
    and therefore $G+K_n$ is C*-nuclear.
\end{proof}





\begin{proposition} \label{prop: clique sum C*-nuclear}
    Suppose that $G$ and $H$ are C*-nuclear. Then, for every $n\in \mathbb{N}$ the $n$-clique sum $G$ and $H$ is C*-nuclear.
\end{proposition}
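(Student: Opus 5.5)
We again work with the matrix completion characterization of \cite[Theorem 3.10]{AKRSx}, i.e. Problem \ref{prob: main prob}. Write the vertex set of the clique sum $G \vee_K H$ as $V_1 \sqcup K \sqcup V_2$, where $K$ is the common $n$-clique, $V_1 = V(G)\setminus K$ and $V_2 = V(H) \setminus K$. Since $K$ separates $V_1$ from $V_2$, every edge of $G \vee_K H$ lies in $G$ or in $H$, and no entries joining $V_1$ to $V_2$ are specified.

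Let $\cA \subseteq B(\cc H)$ be a unital C*-algebra and $A$ a partially defined $\cA$-valued matrix on $G \vee_K H$. Fix $\epsilon>0$ and suppose $A + \epsilon I$ has a positive definite completion $T \in M_{|V|}(B(\cc H))$. Compressing $T$ to the rows and columns indexed by $V(G) = V_1 \sqcup K$ gives a positive definite $B(\cc H)$-valued completion of the restriction $A|_G + \epsilon I$. The same holds for $V(H)$.

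The first step is to upgrade the strict positivity so that slack is not lost. Apply the hypothesis at $\epsilon/2$: $A + \tfrac{\epsilon}{2} I$ has a $B(\cc H)$-completion. Since $G$ and $H$ are C*-nuclear, we obtain positive (indeed positive definite) completions $S_G \in M_{|V(G)|}(\cA)$ of $A|_G + \tfrac{\epsilon}{2} I$ and $S_H \in M_{|V(H)|}(\cA)$ of $A|_H + \tfrac{\epsilon}{2}I$. Adding $\tfrac{\epsilon}{2} I$ to each, we get $\cA$-valued completions of $A|_G + \epsilon I$ and $A|_H + \epsilon I$ that are bounded below by $\tfrac{\epsilon}{2}$.

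These two completions agree on the $K\times K$ block, because that block is fully specified: $K$ is a clique in both graphs. So we have $\cA$-valued matrices
\[
\begin{pmatrix} P & X \\ X^* & B \end{pmatrix}, \qquad \begin{pmatrix} B & Y \\ Y^* & C \end{pmatrix},
\]
both positive definite, with $B \in M_n(\cA)$ invertible. By Lemma \ref{lem: matrix completion and schur}(2), setting $Z = X B^{-1} Y$ yields a positive $3\times 3$ block completion.

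Because $B^{-1} \in M_n(\cA)$ (inverse-closedness of C*-algebras), we have $Z \in M_{|V_1|\times|V_2|}(\cA)$. Hence this is an $\cA$-valued positive semidefinite completion of $A + \epsilon I$.

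To obtain positive definiteness, run the whole argument at $\epsilon/2$ and add $\tfrac{\epsilon}{2} I$ at the end. Since $\epsilon$ was arbitrary, Problem \ref{prob: main prob} has a positive solution for $G\vee_K H$.

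The main obstacle is bookkeeping rather than mathematics. One must check that the $K$-blocks of the two $\cA$-completions coincide, which uses that $K$ is a clique in both $G$ and $H$. One must also keep strict positivity so that $B$ is invertible in $M_n(\cA)$ and Lemma \ref{lem: matrix completion and schur}(2) applies; the $\epsilon/2$ trick handles this. If $n=0$ (a disjoint union), the same argument works with $Z=0$.
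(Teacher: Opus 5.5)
Your proposal is correct and follows essentially the same route as the paper: use C*-nuclearity of $G$ and $H$ to obtain $\mathcal{A}$-valued completions of the two overlapping blocks, note that they share the fully specified clique block, and fill the $V_1\times V_2$ block with $Z = XB^{-1}Y$ via Lemma \ref{lem: matrix completion and schur}(2). Your extra bookkeeping makes explicit what the paper's terse argument leaves implicit: you keep $B$ bounded below so that it is invertible, and you observe that $B^{-1}\in M_n(\mathcal{A})$, so $Z$ is $\mathcal{A}$-valued.
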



\begin{proof}
    We must show that the partially defined matrix
    \[ 
    \begin{pmatrix} P_? & X_? & Z_? \\ X_?^* & Q & Y_? \\ Z_?^* & Y_?^* & R_? \end{pmatrix} + \epsilon I_{d_1+n+d_2} \in M_{d_1+n+d_2}(\cA) 
    \] 
    has a positive matrix completion in $M_{d_1+n+d_2}(\cA)$ whenever it has a $B(\mathcal{H})$-valued positive completion, where $P_?, X_?, Y_?$ and $R_?$ are partially defined and $Z_?$ is completely unspecified. Since $G$ and $H$ are C*-nuclear, there exist $\cA$-valued completions $P, X, Y$ and $R$ of the partially-defined matrices such that
    \[ 
    \begin{pmatrix} P & X \\ X^* & Q \end{pmatrix} + \epsilon I_{d_1+n} \in M_{d_1+n}(\cA)^+ \quad \text{and} \quad \begin{pmatrix} Q & Y \\ Y^* & R \end{pmatrix} + \epsilon I_{n+d_2} \in M_{n+d_2}(\cA)^+. \] 
    By Lemma \ref{lem: matrix completion and schur}, there exists a matrix completion for $Z$ as desired. So the $n$-clique sum of $G$ and $H$ is C*-nuclear.
\end{proof}

We obtain the following clique blowup permanence of C*-nuclearity.

\begin{lemma} \label{lem:blow-up}
Let $H^{(m)}$ denote the $m$-clique blowup of a graph $H$. If $H$ is C*-nuclear, then $H^{(m)}$ is C*-nuclear.
\end{lemma}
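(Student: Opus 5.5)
The plan is to work directly with the matrix completion criterion of \cite[Theorem 3.10]{AKRSx} (Problem \ref{prob: main prob}), and to reduce the blown-up problem over $\cA$ to the original problem over the amplified C*-algebra $M_m(\cA)$. The key observation is that $M_m(\cA) \subseteq M_m(B(\mathcal{H})) = B(\mathcal{H}^{(m)})$ is again a unital C*-algebra. Since $\cc S_H$ is C*-nuclear, Problem \ref{prob: main prob} has a positive solution for $H$ with \emph{every} unital C*-algebra, in particular for $M_m(\cA)$.

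Concretely, let $H$ have vertex set $[k]$. Order the vertices of $H^{(m)}$ as $0_0,\dots,0_{m-1},1_0,\dots,(k-1)_{m-1}$, so that an $mk\times mk$ matrix over $\cA$ is identified with a $k\times k$ matrix whose entries are blocks $B_{i,j}\in M_m(\cA)$. By the definition of $E(H^{(m)})$, the pair $(i_s,j_t)$ is an edge exactly when $(i,j)\in E(H)$ (including $i=j$, since each $i_s,i_t$ lie in a common clique and $(i,i)\in E(H)$). Hence the sparsity pattern of $H^{(m)}$ is exactly the block pattern of $H$: a partially defined $\cA$-valued matrix $A$ over $H^{(m)}$ is the same thing as a partially defined $M_m(\cA)$-valued matrix $\mathbf{A}$ over $H$, with each specified entry a fully specified $m\times m$ block. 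Moreover $\epsilon I_{mk}$ corresponds to $\epsilon I_k$ with identity $I_m\in M_m(\cA)$.

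Then I would verify the hypotheses transfer. If $A+\epsilon I_{mk}$ has a positive definite completion in $M_{mk}(B(\mathcal{H}))$, then the same operator, read in blocks, is a positive definite completion of $\mathbf{A}+\epsilon I_k$ in $M_k(B(\mathcal{H}^{(m)}))$. Positivity is unchanged because $M_{mk}(B(\mathcal{H}))\cong M_k(M_m(B(\mathcal{H})))$ as C*-algebras, acting on the same Hilbert space $\mathcal{H}^{mk}$. Applying C*-nuclearity of $H$ with the C*-algebra $M_m(\cA)\subseteq B(\mathcal{H}^{(m)})$ then yields an $M_m(\cA)$-valued positive definite completion of $\mathbf{A}+\epsilon I_k$. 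Unpacking the blocks, this is an $\cA$-valued positive definite completion of $A+\epsilon I_{mk}$. So Problem \ref{prob: main prob} is solved for $H^{(m)}$, and by \cite{AKRSx} the graph $H^{(m)}$ is C*-nuclear.

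The main point requiring care is the bookkeeping that the blow-up pattern coincides exactly with the block pattern. In particular, the diagonal blocks are fully specified because the intra-clique edges together with the loops $(i_s,i_s)$ fill the whole $m\times m$ diagonal block, and no partially specified blocks ever occur. One should also check that the theorem of \cite{AKRSx} is applied to $M_m(\cA)$ with its representation on $\mathcal{H}^{(m)}$, whose ambient algebra is $M_m(B(\mathcal{H}))$. This is legitimate since the criterion quantifies over all concrete unital C*-algebras. No analytic estimate is needed beyond this identification.
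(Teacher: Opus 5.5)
Your proof is correct, but it takes a different route from the paper's. The paper does not invoke the completion criterion for this lemma. It observes that, after reordering vertices, $\mathcal{S}_{H^{(m)}} \cong M_m(\mathcal{S}_H)$, and then uses the chain $\mathcal{S}_{H^{(m)}}\otimes_c \mathcal{A} \cong M_m(\mathcal{S}_H\otimes_c\mathcal{A}) \cong M_m(\mathcal{S}_H\otimes_{min}\mathcal{A})\cong \mathcal{S}_{H^{(m)}}\otimes_{min}\mathcal{A}$. In other words, it amplifies the identification $\mathcal{S}_H\otimes_{min}\mathcal{A}=\mathcal{S}_H\otimes_c\mathcal{A}$ to the $m$-th matrix level.

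You instead go through \cite[Theorem 3.10]{AKRSx} in both directions and absorb the $M_m$ into the coefficient algebra, applying C*-nuclearity of $H$ to the C*-algebra $M_m(\mathcal{A})\subseteq B(\mathcal{H}^{(m)})$. In tensor language, you use $\mathcal{S}_{H^{(m)}}\otimes\mathcal{A}\cong \mathcal{S}_H\otimes M_m(\mathcal{A})$ rather than $M_m(\mathcal{S}_H\otimes\mathcal{A})$.

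What each route buys:
\begin{itemize}
\item The paper's argument is more general: it shows $M_m(\mathcal{S})$ is C*-nuclear for any C*-nuclear operator system $\mathcal{S}$, with no graph structure or completion theorem needed. However, it tacitly uses $M_m(\mathcal{S})\otimes_c\mathcal{A}\cong M_m(\mathcal{S}\otimes_c\mathcal{A})$, which needs some justification (for instance $\otimes_c=\otimes_{max}$ against C*-algebras, associativity of $\otimes_{max}$, and nuclearity of $M_m$).
\item Your argument avoids that tensor-product bookkeeping entirely. The price is reliance on the external characterization, including its quantification over all unital C*-algebras.
\end{itemize}

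One caveat: your argument uses the criterion in the unnormalized form of Problem \ref{prob: main prob}, exactly as the paper states it, so it is fine as written. If one worked instead with partial Choi matrices of UCP maps (diagonal summing to $1_{\mathcal{A}}$, as in the proof of Theorem \ref{thm:ind-min}), the block diagonal $\sum_i \mathbf{A}_{ii}$ would not equal $1_{M_m(\mathcal{A})}$. You would then need a perturb-and-rescale step, conjugating by $\bigl(\sum_i\mathbf{A}_{ii}+\delta\bigr)^{-1/2}$, to transfer the problem.
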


\begin{proof}
After ordering the vertices by according to their fiber blowups, we have $\mathcal{S}_{H^{(m)}}\cong M_m(\mathcal{S}_H)$. Thus, for any C*-algebra $\mathcal{A}$, we have up to a reblocking map (and its inverse at the end) that via natural complete order isomorphisms,
$$
\mathcal{S}_{H^{(m)}} \otimes_c \mathcal{A} \cong M_m(\mathcal{S}_H \otimes_c \mathcal{A}) \cong M_m(\mathcal{S}_H \otimes_{min} \mathcal{A}) \cong \mathcal{S}_{H^{(m)}} \otimes_{min} \mathcal{A},
$$
so that $\mathcal{S}_{H^{(m)}}$ is C*-nuclear.
\end{proof}

\begin{theorem} \label{thm:ind-min}
Let $H$ be an induced minor of a finite graph $G$. If $G$ is C*-nuclear, then $H$ is C*-nuclear.
\end{theorem}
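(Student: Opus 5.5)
Since an induced minor arises from $G$ by a finite sequence of vertex deletions and edge contractions, it suffices to show that each single operation preserves C*-nuclearity. Throughout I will use the characterization \cite[Theorem 3.10]{AKRSx}: $G$ is C*-nuclear if and only if Problem \ref{prob: main prob} has a positive solution for $G$.

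\textbf{Vertex deletion.} Let $H = G(U)$ and fix $\cA \subseteq B(\mathcal{H})$ and a partially defined $A = (a_{ij})_{(i,j)\in E(U)}$ such that $A+\epsilon I_{|U|}$ has a positive definite $B(\mathcal H)$-completion $T_\epsilon$ for every $\epsilon>0$. Extend $A$ to a partial matrix $A'$ on $G$ by setting $a_{vv} = 0$ for $v\notin U$, leaving all other entries indexed by edges meeting $V\setminus U$ equal to $0$. Then $T_\epsilon \oplus \epsilon I$ is a positive definite completion of $A'+\epsilon I$, with the zero entries of $A'$ matching the off-diagonal zero blocks. C*-nuclearity of $G$ yields an $\cA$-valued positive definite completion of $A'+\epsilon I$, and its compression to the $U$ coordinates is an $\cA$-valued positive definite completion of $A+\epsilon I$.

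Alternatively, one can argue operator-theoretically: $\mathcal S_{G(U)}$ is a retract of $\mathcal S_G$ via the UCP compression $x\mapsto P_UxP_U$ together with the unital embedding $x\mapsto x + (I-P_U)$. Retracts of C*-nuclear operator systems are C*-nuclear, since both $\otimes_{\min}$ and $\otimes_c$ are functorial for UCP maps.

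\textbf{Edge contraction.} This is the main step. Let $H$ be obtained by contracting the edge $(i,j)$ into the vertex $w$. My plan is to realize $\mathcal S_H$ as a retract of $\mathcal S_{G'}$ for a C*-nuclear graph $G'$ built from $G$.

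\begin{enumerate}
\item First try the blowup. In $G^{(2)}$, which is C*-nuclear by Lemma \ref{lem:blow-up}, the vertices $i_0,i_1,j_0,j_1$ form a clique, and every neighbor of $i$ or $j$ is adjacent to the appropriate copies. The difficulty is that the contracted vertex $w$ must be adjacent to $N(i)\cup N(j)$, and a single vertex of $G^{(2)}$ is not adjacent to both neighborhoods. So the blowup does not directly produce $H$ as an induced subgraph.
\item Instead use the matrix completion formulation with a compression trick. Given a partial matrix $A$ on $H$ with entries in $\cA$, define a partial matrix on $G$ by duplicating the $w$-row and $w$-column into both the $i$- and $j$-positions. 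Set the $(i,j)$-entry to $a_{ww}$, and use $a_{wk}$ wherever $(i,k)$ or $(j,k)$ is an edge of $G$; this is legitimate since these entries are all specified in $A$.
\item A $B(\mathcal H)$-completion $T$ of $A$ yields one of the new partial matrix as $V T V^*$, where $V$ is the isometry-like map duplicating the $w$ coordinate. A small perturbation handles the $\epsilon$ requirement.
\item By C*-nuclearity of $G$ we obtain an $\cA$-valued positive completion $S$. Compress with the column $\tfrac12(e_i+e_j)$ in place of $w$, i.e. apply $W^*SW$, where $W$ sends $e_w$ to $\tfrac12(e_i+e_j)$ and fixes the other basis vectors.
\item Check that the specified entries are recovered. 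The $(w,w)$ entry averages four copies of $a_{ww}$. The $(w,k)$ entry averages the two specified entries $a_{wk}$ when $k$ is adjacent to both $i$ and $j$. Positivity is preserved under compression.
\end{enumerate}

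The obstacle is a neighbor $k$ adjacent to $i$ but not to $j$. In that case the $(j,k)$ entry of $S$ is forced to be $0$, so the average gives $\tfrac12 a_{wk}$ rather than $a_{wk}$. I would fix this by scaling: place $a_{wk}$ in the $i$-slot and account for the weights via the Schur complement description of Lemma \ref{lem: matrix completion and schur}. Alternatively, I would first pass to a blowup $G^{(m)}$ and compress using a vector supported on the $i$-clique and $j$-clique with weights adapted to each neighbor. If neither works, I would reduce to checking that the scaled partial matrix $D A D$, with $D$ diagonal and invertible, still admits $B(\mathcal H)$-completions; this is automatic, since congruence by a scalar diagonal matrix preserves positivity.
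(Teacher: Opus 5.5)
Your vertex-deletion argument is correct and is the paper's argument: pad with a zero row and column, complete in $\mathcal{A}$, and compress back to $U$. In your alternative retract argument, $x\mapsto x+(I-P_U)$ is affine, not linear. Use $x\mapsto x+\phi(x)(I-P_U)$ for a state $\phi$ on $\mathcal{S}_{G(U)}$ instead.

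\textbf{The edge-contraction step has a genuine gap, starting with a misdiagnosis in step 5.} For $k\in N(i)\setminus N(j)$, the pair $(j,k)$ is a non-edge of $G$. So the $(j,k)$ entry of the $\mathcal{A}$-valued completion $S$ is \emph{unspecified}, not forced to be $0$. Your step 3 already relies on this, since $VTV^*$ has $(j,k)$ entry $t_{wk}=a_{wk}$. The real difficulty is that $\tfrac12(s_{ik}+s_{jk})=\tfrac12(a_{wk}+s_{jk})$ contains an entry you do not control.

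None of your remedies addresses this. For any compression $e_w\mapsto\gamma e_i+\delta e_j$, the new $(w,k)$ entry is $\bar\gamma s_{ik}+\bar\delta s_{jk}$. When both kinds of neighbors occur, independence from the free entries for $k\in N(i)\setminus N(j)$ forces $\delta=0$, and independence for $l\in N(j)\setminus N(i)$ forces $\gamma=0$. A diagonal congruence or a blowup does not change this. Weights ``adapted to each neighbor'' are not available either, because a single vector determines the whole $w$-row.

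\textbf{The missing idea is the core of the paper's proof: positivity itself controls these free entries.} Your lifted $\{i,j\}$-corner of $\tilde A+\epsilon I$ is $\begin{pmatrix} a_{ww}+\epsilon & a_{ww}\\ a_{ww} & a_{ww}+\epsilon\end{pmatrix}$. Hence any positive completion $S$ satisfies $(e_i-e_j)^*S(e_i-e_j)=2\epsilon 1_{\mathcal{A}}$. Compressing $S$ to the span of $e_i-e_j$ and $e_k$ gives
\[
\begin{pmatrix} 2\epsilon 1_{\mathcal{A}} & s_{ik}-s_{jk}\\ s_{ki}-s_{kj} & a_{kk}+\epsilon 1_{\mathcal{A}}\end{pmatrix}\ge 0,
\qquad\text{hence}\qquad
\|s_{ik}-s_{jk}\|\le\sqrt{2\epsilon\,(\|a_{kk}\|+\epsilon)} .
\]
So the $i$- and $j$-rows of $S$ agree up to $O(\sqrt{\epsilon})$, and your averaged compression reproduces every specified entry of $A$ up to $O(\sqrt{\epsilon})$. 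You must then overwrite the finitely many slightly wrong entries with their exact values. The resulting self-adjoint correction $E_\epsilon$ satisfies $\|E_\epsilon\|\to 0$, so it can be absorbed into the scalar slack.

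With this estimate and correction, your construction becomes essentially the paper's; the paper deletes row $1$ and rescales row $0$ by $\sqrt2$ instead of averaging. Without it, the contraction case is not established.
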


\begin{proof}
It is enough to prove permanence under vertex deletion and edge contraction. Let $\mathcal{A}$ be a unital C*-algebra.

We start with vertex deletion. Let $H=G \setminus \{v\}$. Let $P$ be an $\mathcal{A}$-valued $H$-partial matrix arising from a UCP map $\mathcal{S}_H\to \mathcal{A}$, and suppose that $P$ has a positive completion in $B(\mathcal{K})$. Enlarge $P$ to a $G$-partial matrix $\widetilde P$ by adjoining a zero row and column at $v$. Since the new diagonal entry is $0$, the sum of the diagonal entries remains $1_{\mathcal{A}}$, and a positive completion of $P$ enlarges to a positive completion of $\widetilde P$ by adjoining the same zero row and column. By $(\min,c)$-nuclearity of $\mathcal{S}_G$, for every $\varepsilon>0$ there is an $A$-valued completion $Q$ of $\widetilde P$ such that $Q+\varepsilon I\ge 0$. Taking the principal submatrix obtained by deleting the row and column of $v$ gives an $\mathcal{A}$-valued completion of $P$ which is positive after adding $\varepsilon I$. Hence $\mathcal{S}_H$ is $(\min,c)$-nuclear.

We now prove permanence under edge contraction. Let $e = \{0,1\} \in E(G)$, and let $H=G/e$. Denote the contracted vertex by $v$, and identify the remaining vertices of $H$ with $2,\dots,n-1$. Let $P=(p_{ij})$ be an $\mathcal{A}$-valued $H$-partial matrix arising from a ucp map $\mathcal{S}_H\to \mathcal{A}$. Assume that $P$ has a positive completion in $B(\mathcal{K})$. We must show that $P$ admits $\mathcal{A}$-valued completions which are positive up to an arbitrarily small scalar perturbation.

Set $t=\frac{\sqrt{2}}{2}$. We lift $P$ to a $G$-partial matrix $\widetilde P=(\widetilde p_{ij})$ by splitting the contracted vertex equally between the vertices $0$ and $1$. On the $\{0,1\}$ corner, we set $\widetilde p_{ij} = \frac{1}{2}p_{vv}$ for $i=0,1$ or $j=0,1$. For $j=0,1$, if $\{0,j\} \in E(G)$, we set $\widetilde p_{0j}=t\,p_{vj}$, and if $\{1,j\} \in E(G)$, we set $\widetilde p_{1j}=t\,p_{vj}$. For $i=0,1$, the entries $\widetilde p_{i0}$ and $\widetilde p_{i1}$ are defined as the adjoints of $\widetilde p_{0i}$ and $\widetilde p_{1i}$ respectively, and all other entries indexed by $\{2,\dots,n\}$ are inherited from $P$.

The diagonal sum of $\widetilde P$ is the same as that of $P$, since $\widetilde p_{00}+\widetilde p_{11}=p_{vv}$. Thus $\widetilde P$ is again the partial Choi matrix of a ucp map on $\mathcal{S}_G$. Now, let $T$ be the matrix corresponding to sending the basis vector $e_v$ to $t(e_0+e_1)$ (for $t=\frac{\sqrt{2}}{2}$) and fixing the remaining basis vectors. If $C$ is a positive completion of $P$, then the lifted matrix $TCT^*$ is a positive completion of $\widetilde P$.

Since $\mathcal{S}_G$ is $(\min,c)$-nuclear, for every $\varepsilon>0$ there is an $\mathcal{A}$-valued completion $Q=(q_{ij})$ of $\widetilde P$ such that $Q+\varepsilon I\ge 0$. Because the edge $\{0,1\}$ is specified, we have $q_{ij}=\frac12 p_{vv}$ for $i=0,1$ or $j=0,1$. We now show that the first two rows of $Q$ are asymptotically equal as $\varepsilon \rightarrow 0$.

Fix $j\geq 2$. Compress $Q+\varepsilon I$ to the two-dimensional subspace generated by $e_0-e_1$ and $e_j$. We obtain
$$
\begin{pmatrix}
2\varepsilon 1_{A} & q_{0j}-q_{1j}\\
q_{j0}-q_{j1} & p_{jj}+\varepsilon 1_{A}
\end{pmatrix}\ge 0.
$$
The standard $2\times2$ positivity estimate yields $(q_{j0}-q_{j1})(q_{0j}-q_{1j}) \leq 2\varepsilon\bigl(p_{jj}+\varepsilon1_{A}\bigr)$, so that 
$$
\|q_{0j}-q_{1j}\| \leq \sqrt{2\varepsilon\bigl(\|p_{jj}\|+\varepsilon\bigr)}.
$$ 

We now delete row and column $1$ from $Q$, and rescale row and column $0$ by $\sqrt2$. More precisely, let $Q_0$ be the compression of $Q$ to the index set $\{0,2,\dots,n-1\}$, $D=\operatorname{diag}(\sqrt2,1,\dots,1)$ and $R=DQ_0D$. Since $Q_0+\varepsilon I\ge 0$, we have $R+\varepsilon D^2\ge 0$, and therefore $R+2\varepsilon I\ge 0$. We identify the first coordinate of $R$ with the contracted vertex $v$.

The diagonal entry at $v$ is $R_{vv}=2q_{00}=p_{vv}$. Fix $j\geq 2$. If $\{v,j\} \in E(H)$ because $\{0,j\}\in E(G)$, then
$$
R_{vj}=\sqrt2\,q_{0j}
=\sqrt2\left(\frac{\sqrt{2}}{2}p_{vj}\right)
=p_{vj}.
$$
If instead $\{v,j\}\in E(H)$ only because $\{1,j\}\in E(G)$, then $q_{1j}$ is specified and $q_{1j}=\frac{\sqrt{2}}{2} p_{vj}$. Thus, 
$$
\begin{aligned}
\|R_{vj}-p_{vj}\|
=\|\sqrt2\,q_{0j}-p_{vj}\|
=\sqrt2\,\|q_{0j}-q_{1j}\|
\le 2\sqrt{\varepsilon\bigl(\|p_{jj}\|+\varepsilon\bigr)},
\end{aligned}
$$
and every specified entry of $P$ is the same as the one in $R$, except perhaps for finitely many entries of the form above, where the errors in those entries tend uniformly to zero with $\varepsilon$.

Change in $P$ those entries exactly, together with their corresponding adjoints. Let $E_\varepsilon$ denote the resulting self-adjoint correction, so that $R'=R+E_\varepsilon$ is a genuine $\mathcal{A}$-valued completion of $P$. Since the number of corrected entries is finite and every corrected entry tends to zero in norm, we get $\|E_\varepsilon\|\longrightarrow 0$ as $\varepsilon\rightarrow 0$. Hence, $R'+\bigl(2\varepsilon+\|E_\varepsilon\|\bigr)I\ge 0$. Now, since $2\varepsilon+\|E_\varepsilon\|\longrightarrow 0$, we obtain $\mathcal{A}$-valued completions of $P$ which are positive up to an arbitrarily small scalar perturbation. Hence $\mathcal{S}_H$ is $(\min,c)$-nuclear.

This proves permanence under edge contraction. The rest follows by induction, and therefore we obtain permanence under induced minors.
\end{proof}

We conclude with a general permanence property for operator systems, which will be needed later in the context of graph operator systems. Let $\mathcal{S}$ and $\mathcal{T}$ be operator systems. Then $\mathcal{S}$ is a \textit{retract} of $\mathcal{T}$ (in the sense of \cite{Harris}) if there exist UCP maps $\varphi: \mathcal{S} \to \mathcal{T}$ and $\psi: \mathcal{T} \to \mathcal{S}$ such that $\psi \circ \varphi$ is the identity map on $\mathcal{S}$.

\begin{proposition} \label{prop: retracts}
    Suppose that $\mathcal{S}$ is a retract of $\mathcal{T}$ and that $\mathcal{T}$ is C*-nuclear. Then $\mathcal{S}$ is C*-nuclear.
\end{proposition}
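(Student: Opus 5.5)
The plan is to use functoriality of the minimal and maximal tensor products with respect to UCP maps. Fix a unital C*-algebra $\mathcal{A}$. We must show that the identity map $\mathcal{S} \otimes_{min} \mathcal{A} \to \mathcal{S} \otimes_{max} \mathcal{A}$ is completely positive. The reverse map $\mathcal{S}\otimes_{max}\mathcal{A} \to \mathcal{S}\otimes_{min}\mathcal{A}$ is always completely positive, since $\otimes_{max}$ is the largest tensor structure.

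Let $\varphi:\mathcal{S}\to\mathcal{T}$ and $\psi:\mathcal{T}\to\mathcal{S}$ be the UCP maps with $\psi\circ\varphi=\mathrm{id}_{\mathcal{S}}$. The key input from \cite{KPTT1} is that both $\otimes_{min}$ and $\otimes_{max}$ are functorial: if $\alpha$ and $\beta$ are UCP, then $\alpha\otimes\beta$ is UCP between the corresponding min (respectively max) tensor products. For $\otimes_{min}$ this follows from the spatial description together with Arveson extension and Stinespring. For $\otimes_{max}$ it follows because $\alpha\otimes\beta$ maps the generating positive elements $P\otimes Q$ to elements of the same form, and it preserves the Archimedean order unit.

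Now factor the identity on $\mathcal{S}\otimes\mathcal{A}$ as
\[
\mathcal{S}\otimes_{min}\mathcal{A}\xrightarrow{\varphi\otimes \mathrm{id}}\mathcal{T}\otimes_{min}\mathcal{A}\xrightarrow{\mathrm{id}}\mathcal{T}\otimes_{max}\mathcal{A}\xrightarrow{\psi\otimes\mathrm{id}}\mathcal{S}\otimes_{max}\mathcal{A}.
\]
The first map is completely positive by functoriality of $\otimes_{min}$. The middle map is completely positive because $\mathcal{T}$ is C*-nuclear. The last map is completely positive by functoriality of $\otimes_{max}$. The composition is $(\psi\circ\varphi)\otimes\mathrm{id}=\mathrm{id}$ on the algebraic tensor product. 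Hence the min and max matrix orderings on $\mathcal{S}\otimes\mathcal{A}$ coincide, so $\mathcal{S}\otimes_{min}\mathcal{A}=\mathcal{S}\otimes_{max}\mathcal{A}$ completely order isomorphically. Since $\mathcal{A}$ was arbitrary, $\mathcal{S}$ is C*-nuclear.

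There is no real obstacle; the only point needing care is citing functoriality correctly. An alternative is to run the same factorization with $\otimes_c$ in place of $\otimes_{max}$, using the characterization of C*-nuclearity as $(\min,c)$-nuclearity. Functoriality of $\otimes_c$ is immediate from its universal property: if $\alpha:\mathcal{S}\otimes_c\mathcal{A}\to\ldots$ is built from maps with commuting ranges, then precomposing one of them with the UCP map $\psi$ keeps the ranges commuting.
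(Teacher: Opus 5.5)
Your proposal is correct and is essentially the paper's own proof: both factor the identity $\mathcal{S}\otimes_{min}\mathcal{A}\to\mathcal{S}\otimes_{max}\mathcal{A}$ as $(\psi\otimes \mathrm{id})\circ\iota_{\mathcal{A}}\circ(\varphi\otimes\mathrm{id})$, using functoriality of $\otimes_{min}$ and $\otimes_{max}$ from \cite{KPTT1} together with C*-nuclearity of $\mathcal{T}$. Your justifications of functoriality, and the remark about running the argument with $\otimes_c$, are sound but not needed beyond citing \cite{KPTT1}.
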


\begin{proof}
    It suffices to show that for any C*-algebra $\mathcal{A}$ the identity map $\mathcal{S} \otimes_{min} \mathcal{A} \to \mathcal{S} \otimes_{max} \mathcal{A}$ is UCP. Since the min and max tensor products are functorial \cite{KPTT1}, we have UCP maps $\varphi \otimes id: \mathcal{S} \otimes_{min} \mathcal{A} \to \mathcal{T} \otimes_{min} \mathcal{A}$ and $\psi \otimes id: \mathcal{T} \otimes_{max} \mathcal{A} \to \mathcal{S} \otimes_{max} \mathcal{A}$, where $\psi \circ \varphi$ is the identity map on $\mathcal{S}$. If $\mathcal{T}$ is C*-nuclear, then the identity map $\iota_{\mathcal{A}}$ from $\mathcal{T} \otimes_{min} \mathcal{A}$ to $\mathcal{T} \otimes_{max} \mathcal{A}$ is UCP, and it follows that the identity map $\mathcal{S} \otimes_{min} \mathcal{A} \to \mathcal{S} \otimes_{max} \mathcal{A}$ factors as $(\psi \otimes id) \circ \iota_{\mathcal{A}} \circ (\varphi \otimes id)$, and is therefore also UCP. Thus, $\mathcal{S}$ is C*-nuclear.
\end{proof}

\section{C*-nuclearity for 2-chordal graphs} \label{sec: 2-chordal implies C*-nuclear}

In this section, we prove that every 2-chordal graph is C*-nuclear. We will first prove that the 4-cycle $C_4$ is C*-nuclear. Then, by Part 3 of Theorem \ref{thm: McCullough} and the permanence properties of Section \ref{sec: Permanence}, we will conclude that every 2-chordal graph is C*-nuclear.

\begin{theorem} \label{C_4 C*-nuc}
Let $C_4$ be the cycle graph on four vertices, and let
$$
\mathcal{S}_{C_4} = 
\operatorname{span}
\{
E_{ii}:0\leq i\leq 3
\}
+
\operatorname{span}
\{
E_{01},E_{10},E_{12},E_{21},
E_{23},E_{32},E_{30},E_{03}
\}
\subseteq M_4.
$$
Then $\mathcal{S}_{C_4}$ is C*-nuclear.
\end{theorem}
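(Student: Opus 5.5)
By \cite[Theorem 3.10]{AKRSx}, it suffices to show that Problem \ref{prob: main prob} has a positive solution for $G=C_4$. Fix a unital C*-algebra $\cA\subseteq B(\cc H)$ and a partially defined matrix
\[
\begin{pmatrix} a_0 & x_{01} & ? & x_{03} \\ x_{01}^* & a_1 & x_{12} & ? \\ ? & x_{12}^* & a_2 & x_{23} \\ x_{03}^* & ? & x_{23}^* & a_3\end{pmatrix}
\]
with entries in $\cA$. Assume that after adding $\epsilon I_4$ it has a positive definite completion in $M_4(B(\cc H))$ for every $\epsilon>0$. We must find an $\cA$-valued positive definite completion of the perturbed matrix.

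The unknowns are the two antidiagonal entries $z=(0,2)$ and $w=(1,3)$. The plan is to fill in one of them so that the remaining pattern becomes chordal: the triangulated $4$-cycle is a clique sum of two triangles along an edge. Then \cite[Theorem 2.1]{PaulsenRodman}, or Lemma \ref{lem: matrix completion and schur}(2), supplies the other unknown inside $\cA$. Concretely, with $z$ filled in, the graph is $K_3\vee_{K_2}K_3$ on the cliques $\{0,1,2\}$ and $\{0,2,3\}$. Lemma \ref{lem: matrix completion and schur}(2) then gives $w$ as an explicit $\cA$-valued Schur-complement formula, provided both $3\times 3$ blocks
\[
M_{012}(z)=\begin{pmatrix} a_0 & x_{01} & z\\ x_{01}^* & a_1 & x_{12}\\ z^* & x_{12}^* & a_2\end{pmatrix}+\epsilon I,\qquad M_{023}(z)=\begin{pmatrix} a_0 & z & x_{03}\\ z^* & a_2 & x_{23}\\ x_{03}^* & x_{23}^* & a_3\end{pmatrix}+\epsilon I
\]
are positive definite with $z\in\cA$. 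The whole problem therefore reduces to finding a single $z\in\cA$ that lies simultaneously in two ``operator balls''.

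By Schur complements relative to the middle vertex, $M_{012}(z)>0$ exactly when $z$ lies in an operator ball centered at $c_1=x_{01}(a_1+\epsilon)^{-1}x_{12}\in\cA$. That is, $z=c_1+L_1^{1/2}K R_1^{1/2}$ with $\|K\|<1$, where $L_1, R_1\in\cA$ are the positive Schur complements $a_0+\epsilon-x_{01}(a_1+\epsilon)^{-1}x_{01}^*$ and $a_2+\epsilon-x_{12}^*(a_1+\epsilon)^{-1}x_{12}$. Similarly, $M_{023}(z)>0$ describes a ball centered at $c_3=x_{03}(a_3+\epsilon)^{-1}x_{23}^*$ with radii $L_3,R_3\in\cA$. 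The $B(\cc H)$-completion provides a $z_0\in B(\cc H)$ in the intersection. The key reduction is that for $C_4$ the data form a $2\times 2$ operator problem. After conjugating by $L_1^{-1/2}\oplus R_1^{-1/2}$, which lies in $\cA$, the first ball becomes the open unit ball $\{\|K\|<1\}$. The second becomes $\{\|S+TKU\|<1\}$ for some $S,T,U\in\cA$, equivalently a condition of the form
\[
\begin{pmatrix} I & S+TKU\\ (S+TKU)^* & I\end{pmatrix}>0.
\]
One then needs a strict contraction $K\in\cA$ in this set, given that one exists in $B(\cc H)$.

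This is the main obstacle. The intersection of two operator balls is not a ball, so one cannot simply take a center. Strictness (the $\epsilon$) will be essential, since it makes the feasible set norm-open in $B(\cc H)$. My first attempt is a convexity and continuity argument. The set of admissible $z$ is convex and open in norm, and it is defined by strict inequalities whose coefficients lie in $\cA$. I would try to show it is the image of a convex open set under the Schur-complement parametrization, and to solve for a feasible $z$ explicitly using a canonical choice in $\cA$. One natural candidate is the maximum-entropy (determinant-maximizing) completion. For a banded $2\times 2$ block structure it can be written as rational expressions in the data, $c_1,c_3,L_i,R_i$ and their inverses and square roots, which would place it in $\cA$. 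The remaining task is then to verify positivity of the canonical choice from the existence of $z_0$. I would verify this by showing that any feasible $z_0$ can be pushed to the canonical point along a path staying in both balls. Failing that, I would fall back on a fixed-point argument: alternately project onto the two balls via their Schur parametrizations, which is continuous and $\cA$-valued, and use strict feasibility of $z_0$ to show convergence in norm to a point of the intersection in $\cA$.
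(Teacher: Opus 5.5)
\textbf{There is a genuine gap.} Your reduction is sound. Once some $z\in\mathcal{A}$ satisfies $M_{012}(z)>0$ and $M_{023}(z)>0$, Lemma \ref{lem: matrix completion and schur}(2) fills the $(1,3)$ entry inside $\mathcal{A}$, and each block condition is an open operator ball with center and radii in $\mathcal{A}$. But the step you call the main obstacle is the whole content of the theorem, and the proposal does not supply it. It also cannot follow from convexity, openness and $\mathcal{A}$-valued coefficients alone. Apply the same reduction to $K_{2,3}$: fill the $(v,w)$ entry, and what remains is the chordal graph $K_2+3K_1$, a clique sum of three triangles along $\{v,w\}$. This produces the identical question for \emph{three} operator balls, and Theorem \ref{thm: K_23 not C*-nuc} shows the answer there can be negative. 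So a correct argument must use that there are exactly two constraints, and none of your routes does:
\begin{itemize}
\item There is no determinant over a general C*-algebra, and no closed-form central completion for the non-chordal pattern $C_4$. The maximum-entropy candidate is therefore neither defined nor shown feasible.
\item The path argument is circular. By convexity the segment from $z_0$ to a point stays feasible as soon as that point is feasible, and feasibility of the canonical point is exactly what must be proved.
\item Alternating projections onto operator-norm balls have no convergence theory in a C*-algebra. Nearest points are not unique, and your Schur-parametrized retractions are not nearest-point maps for any common Hilbert geometry. Moreover $z_0\notin\mathcal{A}$ never enters the iteration in a way that could force convergence into the intersection. Such a scheme would also not distinguish two balls from three.
\end{itemize}

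The missing idea is duality combined with amenable averaging; the paper never constructs an element of $\mathcal{A}$ directly. It proves $P\in\overline{M_4(\mathcal{A})^+ +\mathcal{J}}$, where $P$ is the partial matrix padded with zeros and $\mathcal{J}$ is the space of self-adjoint matrices supported on the two chords. This gives completions that become positive after adding $\varepsilon I$, which is all \cite[Theorem 3.10]{AKRSx} requires. By Hahn--Banach it suffices that $F(P)\ge 0$ for every positive functional $F$ on $M_4(\mathcal{A})$ vanishing on $\mathcal{J}$. Write $F([b_{ij}])=\sum_{i,j}\langle\pi(b_{ij})\xi_j,\xi_i\rangle$ via GNS. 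Vanishing on $\mathcal{J}$ gives $\overline{\pi(\mathcal{A})\xi_0}\perp\overline{\pi(\mathcal{A})\xi_2}$ and $\overline{\pi(\mathcal{A})\xi_1}\perp\overline{\pi(\mathcal{A})\xi_3}$, so the symmetries $u=1-2p_2$ and $v=1-2p_3$ lie in $\pi(\mathcal{A})'$. They act by conjugation on the weak$^*$-compact convex set of $B(\mathcal{H})$-valued positive completions of $[\pi(a_{ij})]$, which is nonempty by Arveson's extension theorem, and they preserve the specified entries. They generate a quotient of the infinite dihedral group, which is amenable, so there is an invariant completion $C$. Invariance forces $\langle c_{02}\xi_2,\xi_0\rangle=\langle c_{13}\xi_3,\xi_1\rangle=0$, whence $F(P)=\langle C\xi,\xi\rangle\ge 0$. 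This is exactly where ``two'' is used: two disjoint chords need only two involutions. The analogous scheme for $K_{2,3}$ needs three, and the group they generate need not be amenable, consistent with Theorem \ref{thm: K_23 not C*-nuc}.
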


\begin{proof}
Let $\mathcal{A}$ be a unital $C^*$-algebra, let $\varphi:\mathcal{S}_{C_4}\longrightarrow \mathcal{A}$ be a unital completely positive map, and let $\varepsilon>0$. It suffices to show
that the partially defined matrix $\big(\varphi(E_{ij})\big)_{(i,j)\in E(C_4)}$ admits a completion $B\in M_4(\mathcal{A})$ such that $B+\varepsilon I_4\otimes 1_{\mathcal A}\geq 0$.

For every diagonal or edge entry, write $a_{ij}=\varphi(E_{ij})$, and let
$$
P=
\begin{pmatrix}
a_{00} & a_{01} & 0      & a_{03} \\
a_{10} & a_{11} & a_{12} & 0 \\
0      & a_{21} & a_{22} & a_{23} \\
a_{30} & 0      & a_{32} & a_{33}
\end{pmatrix}
\in M_4(\mathcal A)_{\mathrm{sa}}.
$$
Now let
$$
\mathcal J=
\Bigg \{
\begin{pmatrix}
0 & 0 & x & 0 \\
0 & 0 & 0 & y \\
x^* & 0 & 0 & 0 \\
0 & y^* & 0 & 0
\end{pmatrix}
:x,y\in\mathcal A
\Bigg \}
\subseteq M_4(\mathcal A)_{\mathrm{sa}}.
$$
We will show that
\begin{equation} \label{eq:1}
P\in
\overline{M_4(\mathcal A)^++\mathcal J}.
\tag{1}
\end{equation}

To this end, let $F$ be a positive linear functional on
$M_4(\mathcal A)$ such that $F(\mathcal J)=0$. By GNS construction, there exist a unital representation $\pi:\mathcal A\longrightarrow B(\mathcal{H})$ and vectors $\xi_0,\ldots,\xi_3\in \mathcal{H}$ such that
\begin{equation*} 
F\big([b_{ij}]\big)
=\sum_{i,j=0}^3
\left\langle
\pi(b_{ij})\xi_j,\xi_i
\right\rangle
\ \ \
\text{for every} \ \ \ [b_{ij}]\in M_4(\mathcal A).
\end{equation*}
Since $F$ annihilates $\mathcal J$, applying $F$ to the elements of
$\mathcal J$ corresponding to $x$ and $ix$ (with $y=0$) respectively, shows that $\langle \pi(x)\xi_2,\xi_0\rangle=0$ for $x\in\mathcal A$. Similarly, we also obtain $\langle \pi(y)\xi_3,\xi_1\rangle=0$ for $y\in\mathcal A$.

For $0\leq i\leq 3$, we denote $\mathcal{H}_i=\overline{\pi(\mathcal A)\xi_i}$. It follows from the previous two equations that $\mathcal{H}_0\perp \mathcal{H}_2$ and $\mathcal{H}_1\perp \mathcal{H}_3$. Let $p_2$ and $p_3$ denote the orthogonal projections onto $\mathcal{H}_2$ and
$\mathcal{H}_3$, respectively. Since the spaces $\mathcal{H}_i$ reduce $\pi(\mathcal A)$,
we have $p_2,p_3\in\pi(\mathcal A)'$. Define the self-adjoint unitaries $u=1-2p_2$ and $v=1-2p_3$, so that $u\xi_2=-\xi_2$ and $u\xi_0=\xi_0$, while $v\xi_3=-\xi_3$ and $v\xi_1=\xi_1$.

We next construct a positive completion of the matrix
$[\pi(a_{ij})]$ which is invariant under both $u$ and $v$.
Since $\pi\circ\varphi:\mathcal S_{C_4}\to B(\mathcal{H})$ is UCP, Arveson's
extension theorem provides a UCP extension $\Psi:M_4\longrightarrow B(\mathcal{H})$ of $\pi \circ \varphi$. Its Choi matrix $C_\Psi=[\Psi(E_{ij})]_{i,j=1}^4$ is positive and is a completion of the partially defined matrix $[\pi(a_{ij})]$ with entries in $B(\mathcal{H})$.

Let $\mathcal K$ denote the set of all positive completions of this
partial matrix in $M_4(B(\mathcal{H}))$. Thus, we have that the set
\begin{equation*}
\mathcal K
=\{
[c_{ij}]\in M_4(B(\mathcal{H}))^+:
c_{ij}=\pi(a_{ij})
\text{ whenever }i=j
\text{ or }{(i,j)}\in E(C_4)
\},
\end{equation*}
is nonempty, convex, and compact in the entrywise
ultraweak topology. Indeed, positivity and the fact that the diagonal
entries are fixed give a uniform bound on each of the two unspecified
entries by taking either the $\{0,2\}$ positive corner, or the $\{1,3\}$ positive corner, and applying \cite[Exercise 3.2(iii)]{Pnbook} and the reverse triangle inequality.

Let $\Gamma=\langle u,v\rangle$ be the group that acts affinely on $\mathcal K$ by simultaneous conjugation, given by $g\cdot[c_{ij}]
=[gc_{ij}g^*]$ for $g\in\Gamma$. This action preserves the specified entries since $u,v\in\pi(\mathcal A)'$. Moreover, $\Gamma$ is generated by two involutions, and is therefore a
quotient of $\mathbb Z_2*\mathbb Z_2$, which is the infinite dihedral group. In particular, $\Gamma$ is amenable.

Applying the fixed-point property of amenable groups acting affinely on compact convex sets, we obtain an element $C=[c_{ij}]\in\mathcal K$ which is invariant under both $u$ and $v$.

The invariance under $u$, together with how $u$ fixes $\xi_0$ and negates $\xi_2$ yields
$$
\begin{aligned}
\left\langle c_{02}\xi_2,\xi_0\right\rangle
=\left\langle uc_{02}u\xi_2,\xi_0\right\rangle\
=\left\langle c_{02}u\xi_2,u\xi_0\right\rangle\
=-\left\langle c_{02}\xi_2,\xi_0\right\rangle.
\end{aligned}
$$
Therefore, $\langle c_{02}\xi_2,\xi_0\rangle=0$. Likewise, invariance under $v$ together with how $v$ fixes $\xi_1$ and negates $\xi_3$ yields $\langle c_{13}\xi_3,\xi_1\rangle=0$. The entries of $C$ corresponding to edges $(i,j) \in E(C_4)$ agree with those of $\pi(P)$, while the contributions of the two missing chords vanish since $\langle c_{02}\xi_2,\xi_0\rangle=0$ and $\langle c_{13}\xi_3,\xi_1\rangle=0$.
Consequently, since $C\geq0$, we therefore have, we get that  
$$
F(P) = \left\langle
C
\begin{pmatrix}
\xi_0\ \xi_1\ \xi_2\ \xi_3
\end{pmatrix},
\begin{pmatrix}
\xi_0\ \xi_1\ \xi_2\ \xi_3
\end{pmatrix}
\right\rangle \geq 0.
$$

We have shown that every positive linear functional on
$M_4(\mathcal A)$ which annihilates $\mathcal J$ takes a nonnegative
value on $P$. By the Hahn--Banach separation theorem, this is precisely the assertion that proves equation \eqref{eq:1}, namely, that $P\in\overline{M_4(\mathcal A)^+ +\mathcal J}$.

We may therefore choose $Q\in M_4(\mathcal A)^+$ and $Z\in\mathcal J$
such that $|P-Q-Z|<\varepsilon$. Define $R=P-Q-Z$. Since $R=R^*$ and $|R|<\varepsilon$, we get that $R+\varepsilon I_4\otimes1_{\mathcal A}\geq 0$, and it follows that
\begin{equation*}
P-Z+\varepsilon I_4\otimes1_{\mathcal A}=Q+\big(R+\varepsilon I_4\otimes1_{\mathcal A}\big)
\geq0.
\end{equation*}
Since $-Z\in\mathcal J$, the matrix $P-Z$ is a completion of the
original partial matrix $\big(\varphi(E_{ij})\big)_{(i,j)\in E(C_4)}$, and so that the partial matrix associated to $\varphi$ admits a completion which is positive after adding $\varepsilon I_4\otimes1_{\mathcal A}$. As $\mathcal A$, $\varphi$, and $\varepsilon$ were arbitrary, the positive-completion characterization in \cite[Theorem 3.10]{AKRSx} now implies that $\mathcal S_{C_4}$ is $(\min,c)$-nuclear.
\end{proof}

The next theorem is the first direction of our main result.

\begin{theorem} \label{thm: 2-chordal implies C*-nuclear}
    Suppose that $G$ is 2-chordal. Then the operator system $\cc S_G$ is C*-nuclear.
\end{theorem}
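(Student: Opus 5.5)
By Theorem \ref{thm: McCullough} (the implication from part 4 to part 3), a 2-chordal graph $G$ is a clique sum of finitely many graphs, each of which is an induced subgraph of $W_4^{(n)}$ for some $n \geq 1$. I therefore plan to argue in three steps.

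\textbf{Step 1: $W_4$ is C*-nuclear.} By definition $W_4 = K_1 + C_4$. Theorem \ref{C_4 C*-nuc} shows that $C_4$ is C*-nuclear. Proposition \ref{prop: G+K_n C*-nuclear}, applied with $n=1$, then shows that $W_4$ is C*-nuclear.

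\textbf{Step 2: the building blocks.} Lemma \ref{lem:blow-up} gives that $W_4^{(n)}$ is C*-nuclear for every $n \geq 1$. An induced subgraph is obtained by a sequence of vertex deletions, so Theorem \ref{thm:ind-min} shows that every induced subgraph of $W_4^{(n)}$ is C*-nuclear. One edge case: if the clique-sum decomposition uses a block that is itself a clique, it is still an induced subgraph of some $W_4^{(n)}$, since $K_m$ is induced in $W_4^{(m)}$. Thus every block in the decomposition is C*-nuclear.

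\textbf{Step 3: gluing.} I will induct on the number of blocks in the clique-sum decomposition, applying Proposition \ref{prop: clique sum C*-nuclear} at each stage. Suppose $G_1 \vee_K \cdots$ has been assembled from $k$ blocks and is C*-nuclear, and the next block $G_{k+1}$ is glued along a common clique. The proposition gives that the enlarged graph is C*-nuclear.

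The main obstacle is bookkeeping the meaning of ``clique sum'' in McCullough's and Laurent's statement. The gluing clique might have size $0$, i.e.\ a disjoint union, and the decomposition need not be linearly ordered. For a disjoint union, I would note that the proof of Proposition \ref{prop: clique sum C*-nuclear} still works with $n=0$: the off-diagonal block $Z=0$ is allowed, because a direct sum of positive matrices is positive. For the ordering, I would use the standard fact that a clique-sum decomposition can always be arranged as a sequence in which each new block meets the union of the previous ones in a clique. With these points in hand, the induction closes and $\mathcal{S}_G$ is C*-nuclear.
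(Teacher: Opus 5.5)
Your proposal is correct and follows the paper's proof step for step: $C_4$ is C*-nuclear, joining with $K_1$ gives $W_4$, and clique blow-ups, vertex deletions and clique sums then preserve C*-nuclearity, with part 3 of Theorem \ref{thm: McCullough} finishing the argument. Your extra bookkeeping is sound and is left implicit in the paper; this covers gluing along the empty clique, where filling the unspecified block with $0$ works, and ordering the clique-sum decomposition (a structural induction on the iterated binary clique sum would make the linear-ordering fact unnecessary).
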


\begin{proof}
    From above we see that $C_4$ is C*-nuclear. By Proposition \ref{prop: G+K_n C*-nuclear}, the 4-wheel $W_4 = C_4 + K_1$ is C*-nuclear. By Lemma \ref{lem:blow-up} we get that $W_4^{(m)}$ is C*-nuclear for every $m \in \mathbb{N}$. It follows that every induced subgraph of $W_4^{(m)}$ is C*-nuclear by Theorem \ref{thm:ind-min}. By Proposition \ref{prop: clique sum C*-nuclear}, clique sums of these induced subgraphs are also C*-nuclear. It follows from Theorem \ref{thm: McCullough} that every 2-chordal graph is C*-nuclear.
\end{proof}

\section{Failure of C*-nuclearity for $K_{2,3}$ and $2K_1 + P_4$} \label{sec: K_2,3 and 2K_1+P_4 not C*-nuc}

In this section, we provide the first known examples of graph operator systems $\mathcal{S}_G$ which fail to be C*-nuclear. Specifically, we will give direct proofs that the forbidden subgraphs $K_{2,3}$ and $2K_1 + P_4$ are not C*-nuclear. We will consider the remaining forbidden subgraphs in the next section.

For both of these graphs, the proof will rely on a result of Kavruk from \cite{Kavrukw}. In the following, we write $x < y$ for $x,y \in \mathcal{A}$ and $\mathcal{A}$ a C*-algebra to mean $\delta I_{\mathcal{A}} \leq y-x$ for some $\delta > 0$. A C*-algebra $\mathcal{A} \subseteq B(\mathcal{H})$ is said to have the \textit{complete (2,3)-tight Riesz interpolation property} if for every $p \in \mathbb{N}$, whenever there exist $x_1, x_2, y_1, y_2, y_3 \in M_p(\mathcal{A})$ and $b \in M_p(B(\mathcal{H}))$ such that $x_i < b < y_j$ for all indices $i$ and $j$, then there exists $a \in M_p(\mathcal{A})$ such that $x_i < a < y_j$ for all indices $i$ and $j$. Kavruk proves in \cite[Theorem 7.4]{Kavrukw} that a C*-algebra has the complete (2,3)-tight Riesz interpolation property if and only if $\mathcal{A}$ has the \textit{weak expectation property} (or WEP).

If $\mathcal{N}$ is a von Neumann algebra, then it is well-known that  $\mathcal{N}$ has the WEP if and only if $\mathcal{N}$ is injective. It follows that if $\mathcal{N} \subseteq B(\mathcal{H})$ is not injective, then there exists $p \in \mathbb{N}$, $b \in M_p(B(\mathcal{H}))$ and $x_1, x_2, y_1, y_2, y_3 \in M_p(\mathcal{N})$ such that $x_i < b < y_j$ for all indices $i$ and $j$, but $x_i < a < y_j$ is false for every $a \in M_p(\mathcal{N})$ (for some indices $i$ and $j$). By slightly shrinking the intervals, we may moreover arrange that there is no self-adjoint $a \in M_p(\mathcal{N})$ satisfying
$$
x_1,x_2\leq a\leq y_1,y_2,y_3.
$$
Indeed, choose $\delta>0$ such that
$$
x_i+\delta 1\leq b\leq y_j-\delta 1
$$
for every $i=1,2$ and $j=1,2,3$. Replacing $x_i$ by $x_i+\frac{\delta}{2}1$ and $y_j$ by $y_j-\frac{\delta}{2}1$, a non-strict interpolant for the new data would give a strict interpolant for the original data.

We begin by showing that the graph operator system associated to $K_{2,3}$ is not C*-nuclear. 

\begin{theorem} \label{thm: K_23 not C*-nuc}
The graph $K_{2,3}$ is not C*-nuclear.
\end{theorem}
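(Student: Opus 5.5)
The plan is to argue by contradiction through the matrix-completion characterization of \cite[Theorem 3.10]{AKRSx}. We fix a non-injective von Neumann algebra $\mathcal{N} \subseteq B(\mathcal{H})$. By the discussion preceding the statement, there are $p\in\mathbb{N}$, $b \in M_p(B(\mathcal{H}))$ and self-adjoint $x_1,x_2,y_1,y_2,y_3 \in M_p(\mathcal{N})$ with $x_i+\delta 1 \le b \le y_j - \delta 1$, for which no self-adjoint $a\in M_p(\mathcal{N})$ satisfies $x_1,x_2\le a\le y_1,y_2,y_3$.

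We work in the C*-algebra $\mathcal{A}=M_p(\mathcal{N})$, which is again a von Neumann algebra, so that $\mathcal{A}''=\mathcal{A}$. Label the vertices of $K_{2,3}$ as $\alpha_1,\alpha_2$ (the part of size two) and $\beta_1,\beta_2,\beta_3$ (the part of size three). The specified entries of a $K_{2,3}$-partial matrix are the five diagonal entries and the six cross entries $(\alpha_i,\beta_j)$. The unspecified entries are $(\alpha_1,\alpha_2)$ and the three $(\beta_j,\beta_k)$.

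The goal is to choose the specified data, built from $1$, $x_i$, $y_j$ (and suitable square roots, after shifting everything to be positive and invertible), so that two things hold.
\begin{enumerate}
\item There is a $B(\mathcal{H})$-valued positive completion, obtained by inserting $b$, or a simple function of $b$, into the unspecified slots.
\item From any $\mathcal{A}$-valued positive semidefinite completion one can read off an interpolant.
\end{enumerate}
Concretely, I would try to make a compression of the completed matrix to vectors of the form $(\xi,\xi,\ast,\ast,\ast)$, or to a suitable $2\times 2$ or $3\times 3$ principal block, yield the inequalities $y_j - \operatorname{Re}(\cdot) \ge 0$ via a Schur complement (Lemma \ref{lem: matrix completion and schur}) against each $\beta_j$. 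Symmetrically, compressions against the two $\alpha_i$ should yield $\operatorname{Re}(\cdot) - x_i \ge 0$. The entry that plays the role of the interpolant $a$ would be an affine function of the unspecified entries, for instance the real part of the $(\alpha_1,\alpha_2)$ entry. The bipartite structure $2$ versus $3$ of $K_{2,3}$ is exactly what should match the $(2,3)$ shape of the Riesz data.

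Given such an encoding, the argument runs as follows. By the strict inequalities, the $B(\mathcal{H})$-completion can be taken positive definite, so the partial matrix plus $\epsilon I_5$ is $B(\mathcal{H})$-completable for every $\epsilon>0$. If $K_{2,3}$ were C*-nuclear, \cite[Theorem 3.10]{AKRSx} would give $\mathcal{A}$-valued completions positive after adding $\epsilon I_5$ for each $\epsilon$. Proposition \ref{prop: psd completion in A''} then yields a positive semidefinite completion in $M_5(\mathcal{A}'')=M_5(\mathcal{A})$. Applying the compressions above to this completion produces $a\in M_p(\mathcal{N})$ with $x_1,x_2\le a\le y_1,y_2,y_3$, contradicting the choice of data. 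A small additional step is to normalize so that the partial matrix comes from a UCP map on $\mathcal{S}_{K_{2,3}}$, for example by scaling so that the diagonal sums to $1$, if the cited characterization is phrased that way.

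The main obstacle is the explicit choice of the specified entries. One has to ensure simultaneously that (i) $b$ really produces a positive completion, which requires checking a full $5\times 5$ positivity, likely via iterated Schur complements with the $\beta$-block completed by something like $\beta$-rank-one terms, and (ii) positivity of any completion forces both the two lower bounds and the three upper bounds on a single element of $\mathcal{A}$. My first attempt would be to set the diagonal entries at the $\alpha_i$ to $1$, put the cross entries $(\alpha_i,\beta_j)$ equal to $(y_j - x_i)^{1/2}$-type expressions, and take the $\beta$-diagonal to be $1$. I would then examine the Schur complement of the $\beta$-block in order to isolate a $2\times 2$ condition on the $\alpha$-block whose positivity is equivalent to $x_i \le a \le y_j$.
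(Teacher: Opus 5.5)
Your outer logic matches the paper's. You argue by contradiction via \cite[Theorem 3.10]{AKRSx}, use Proposition \ref{prop: psd completion in A''} with $\mathcal A''=\mathcal A$ to get a positive semidefinite $\mathcal A$-valued completion, and read an interpolant off the $(\alpha_1,\alpha_2)$ entry. But the proposal stops exactly where the proof begins. The partial matrix is never specified, the $B(\mathcal H)$-completion is never verified, and the interpolation inequalities are never derived, so there is a genuine gap.

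Your heuristics for the encoding also point the wrong way. Once the $(\alpha_1,\alpha_2)$ entry $U$ is fixed, the pattern becomes $K_2+3K_1$, which is chordal. By the chordal completion theorems, the only constraints a positive completion imposes on $U$ are positivity of the three principal blocks on $\{\alpha_1,\alpha_2,\beta_j\}$. There are no separate constraints ``against $\alpha_i$'' that could carry the lower bounds $x_1,x_2$. Each $\beta_j$ must therefore encode a \emph{two-sided} bound. The paper achieves this by pairing $(x_1,y_1)$, $(x_2,y_2)$, $(x_2,y_3)$, i.e.\ setting $x_3:=x_2$.

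Your concrete first attempt also fails. Take commuting scalar data with $x_1=x_2=x$. The block at $\beta_j$ then reduces to $|U-(y_j-x)|\le 1-(y_j-x)$. So $U=1$ satisfies all three conditions as soon as any $U$ does, and the upper bounds $y_j$ never constrain anything.

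The missing construction goes as follows. Set $D_j=y_j-x_j$ and $C_j=y_j+x_j$, and arrange that the Schur complement against $\beta_j$, compressed to a corner, is
\[
\begin{pmatrix} D_j & u-C_j\\ u^*-C_j & D_j\end{pmatrix}\ge 0 .
\]
Symmetrizing to $h=\frac{u+u^*}{2}$ and conjugating by the Hadamard unitary turns this into $x_j\le \frac h2\le y_j$.

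The paper achieves this by taking the $\alpha$-diagonals equal to $R1$ with $R$ large. The cross entries at $\beta_j$ are chosen to factor $F_j=\begin{pmatrix} R-D_j & C_j\\ C_j & R-D_j\end{pmatrix}>0$. Because $F_j$ is invertible, it is in general not of the form $ww^*$ for a single column $w$ over your $\mathcal A$; for scalars this is impossible, since $F_j$ has rank two. So one takes the rows $P_j,Q_j\in M_{1,2}(\mathcal A)$ of $F_j^{1/2}$ and pads them with a zero row to get $\widehat P_j,\widehat Q_j\in M_2(\mathcal A)$. One then works over $M_2(\mathcal A)$ and compresses by $e=1\oplus 0$ at the end.

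The $B(\mathcal H)$-completion puts $U_0=\operatorname{diag}(2b,0)$ in the $(\alpha_1,\alpha_2)$ slot. The same Schur computation then produces a block Hadamard-equivalent to $\operatorname{diag}\bigl(2(b-x_j),2(y_j-b)\bigr)>0$. The $\beta$-block is filled by $Z^*A_0^{-1}Z+\operatorname{diag}(\rho_1,\rho_2,\rho_3)$. Here $A_0$ is the completed $\alpha$-block, $Z=(Z_1\ Z_2\ Z_3)$ is the cross block, and $\rho_j=1-Z_j^*A_0^{-1}Z_j>0$.

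Without this explicit encoding, or an equivalent one, what you have is a strategy rather than a proof.
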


\begin{proof}
We label the bipartition of $K_{2,3} = 2K_1 + 3K_1$ as $\{v,w\}\sqcup\{1,2,3\}$. Let $\mathcal M\subseteq B(\mathcal{H})$ be a noninjective von Neumann algebra. there exists $p \in \mathbb{N}$, $b \in M_p(B(\mathcal{H}))$ and $x_1, x_2, y_1, y_2, y_3 \in M_p(\mathcal{M})$ such that $x_i \leq b \leq y_j$ for all indices $i$ and $j$, but $x_i \leq a \leq y_j$ is false for every $a \in M_p(\mathcal{M})$ (for some indices $i$ and $j$).

Denote $\mathcal N=M_p(\mathcal M)\subseteq B(\mathcal{K})$, where $\mathcal{K} = \mathcal{H} \otimes\mathbb C^p$. Set $x_3 := x_2$ and define $D_j=y_j-x_j$ and $C_j=y_j+x_j$. Thus $x_j \leq b \leq y_j$ for $j=1,2,3$. Choose $R>0$ sufficiently large so that $R>2\max\bigl\{\|b\|,\|x_j\|,\|y_j\|:j=1,2,3\bigr\}$. For $j=1,2,3$, consider
$$
F_j=
\begin{pmatrix}
R1_{\mathcal N}-D_j&C_j\\
C_j&R1_{\mathcal N}-D_j
\end{pmatrix}.
$$
Conjugating by the scalar Hadamard unitary shows that $F_j$ is unitarily equivalent to
$$
\begin{pmatrix}
R1_{\mathcal N}+2x_j&0\\
0&R1_{\mathcal N}-2y_j
\end{pmatrix},
$$
and hence $F_j > 0$ by the choice of $R$. Now, by taking the two block rows $P_j$ and $Q_j$ of $F_j^{1/2}$ we get
$$
F_j=
\begin{pmatrix}
P_j\\
Q_j
\end{pmatrix}
\begin{pmatrix}
P_j\\
Q_j
\end{pmatrix}^{*},
\ \ \text{for} \ \
P_j,Q_j\in M_{1,2}(\mathcal N).
$$
We then have $P_jP_j^*=Q_jQ_j^*=R1_{\mathcal N}-D_j$ and $P_jQ_j^*=C_j$. Now define
$$
\widehat P_j=
\begin{pmatrix}
P_j\\
0
\end{pmatrix},
\qquad
\widehat Q_j=
\begin{pmatrix}
Q_j\\
0
\end{pmatrix}
\in M_2(\mathcal N),
$$
where the zero denotes a $1\times2$ block row.

We denote by $1$ the unit of $M_2(\mathcal N)$. Consider the $K_{2,3}$-partial matrix with entries in $M_2(\mathcal N)$:
$$
\mathcal P=
\begin{pmatrix}
R1 &?&\widehat P_1&\widehat P_2&\widehat P_3\\
?&R1 &\widehat Q_1&\widehat Q_2&\widehat Q_3\\
\widehat P_1^*&\widehat Q_1^*&1 &?&?\\
\widehat P_2^*&\widehat Q_2^*&?&1 &?\\
\widehat P_3^*&\widehat Q_3^*&?&?&1 
\end{pmatrix}.
$$
We claim that $\mathcal P$ admits a positive completion in $B(\mathcal{K} \oplus \mathcal{K})$. To this end, define
$$
U_0=
\begin{pmatrix}
2b&0\\
0&0
\end{pmatrix}
\in M_2(B(\mathcal{K}))=B(\mathcal{K}\oplus \mathcal{K}), \ \ \ \text{and} \ \ \  A_0=
\begin{pmatrix}
R1 & U_0\\
U_0& R1
\end{pmatrix}.
$$
Since $\|U_0\|=2\|b\|<R$, we have $A_0>0$.

For $j=1,2,3$, denote the $4 \times 1$ matrix $Z_j = \begin{pmatrix} \widehat P_j & \widehat Q_j \end{pmatrix}^{\mathrm{T}}$ with entries in $M_2(\mathcal N)$. Using the identities above and merely permuting the block coordinates, we obtain
$$
A_0-Z_jZ_j^*
\cong
\begin{pmatrix}
D_j&2b-C_j\\
2b-C_j&D_j
\end{pmatrix}
\oplus R1_{\mathcal N}\oplus R1_{\mathcal N}.
$$
The first summand is unitarily equivalent, again by the Hadamard unitary, to the matrix
$$
\begin{pmatrix}
2(b-x_j)&0\\
0&2(y_j-b)
\end{pmatrix},
$$
which is strictly positive. Hence $A_0-Z_jZ_j^*>0$. Since $A_0>0$, the Schur complement criterion gives $\rho_j=1 -Z_j^*A_0^{-1}Z_j>0$. Now set $Z=\begin{pmatrix}Z_1&Z_2&Z_3\end{pmatrix}$ and $B_0=Z^*A_0^{-1}Z+\operatorname{diag}(\rho_1,\rho_2,\rho_3)$. By construction, every diagonal block of $B_0$ is the identity element $1 \in M_2(\mathcal N)$, and $B_0-Z^*A_0^{-1}Z = \operatorname{diag}(\rho_1,\rho_2,\rho_3)>0$. Therefore,
$$
\begin{pmatrix}
A_0&Z\\
Z^*&B_0
\end{pmatrix}
\geq0.
$$
This is a positive completion of $\mathcal P$ in $M_5(M_2(B(\mathcal{K})))$. 

Suppose, toward a contradiction, that $\mathcal S_{K_{2,3}}$ is $(\min,c)$-nuclear. By \cite[Theorem 3.10]{AKRSx} and Proposition \ref{prop: psd completion in A''}, and since $M_2(\mathcal N) = M_2(\mathcal N)''$, we get a positive completion of $\mathcal P$ whose entries all belong to $M_2(\mathcal N)$. Let $U\in M_2(\mathcal N)$ denote the $(v,w)$-entry of this completion. For each $j=1,2,3$, positivity of the principal submatrix corresponding to the vertices $\{v,w,j\}$ yields
$$
\begin{pmatrix}
R1 &U&\widehat P_j\\
U^*& R1 &\widehat Q_j\\
\widehat P_j^*&\widehat Q_j^*& 1 
\end{pmatrix}
\geq0.
$$
Taking the Schur complement of the lower-right identity block gives
$$
\begin{pmatrix}
R1 -\widehat P_j\widehat P_j^*
&
U-\widehat P_j\widehat Q_j^*
\\
U^*-\widehat Q_j\widehat P_j^*
&
R1 -\widehat Q_j\widehat Q_j^*
\end{pmatrix}
\geq0.
$$

Let $e=
\begin{pmatrix}
1_{\mathcal N}&0\\
0&0
\end{pmatrix}
\in M_2(\mathcal N)$, and identify $e M_2(\mathcal N) e$ with $\mathcal N$. Denote $u=eUe\in\mathcal N$. Compressing the preceding inequality by $\operatorname{diag}(e,e)$ yields $\begin{pmatrix}
D_j&u-C_j\\
u^*-C_j&D_j
\end{pmatrix}
\geq 0$.

Set $h=\frac{u+u^*}{2}\in\mathcal N_{\mathrm{sa}}$. Conjugating the preceding matrix by the flip unitary and averaging with the original matrix gives $\begin{pmatrix}
D_j&h-C_j\\
h-C_j&D_j
\end{pmatrix}
\geq 0$. A final conjugation by the Hadamard unitary yields $D_j+h-C_j\geq 0$ and $D_j-h+C_j\geq 0$. 
Setting $a=\frac h2\in\mathcal N_{\mathrm{sa}}$, and recalling that $D_j=y_j-x_j$ and $C_j=y_j+x_j$, we conclude that $x_j\leq a\leq y_j$ for $j=1,2,3$. By the choice of the pairs $(x_j,y_j)$, this gives
$$
x_1,x_2\leq a\leq y_1,y_2,y_3,
$$
contradicting the choice of our interpolation data. Therefore $\mathcal S_{K_{2,3}}$ is not $(\min,c)$-nuclear.
\end{proof}

\begin{theorem} \label{2K1+P4 not C*-nuc}
The graph $2K_1 + P_4$ is not C*-nuclear.
\end{theorem}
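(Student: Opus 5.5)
The plan is to reproduce the proof of Theorem \ref{thm: K_23 not C*-nuc}. Label $2K_1+P_4$ as $\{v,w\}\sqcup\{0,1,2,3\}$, where $v,w$ are non-adjacent, each is adjacent to all of $0,1,2,3$, and the only edges among $0,1,2,3$ are $01,12,23$. The unspecified entries are $(v,w)$, $(0,2)$, $(0,3)$ and $(1,3)$.

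Take a non-injective von Neumann algebra $\mathcal M$ and interpolation data $x_1,x_2\le b\le y_1,y_2,y_3$ in $\mathcal N=M_p(\mathcal M)$ with no non-strict interpolant in $\mathcal N_{\mathrm{sa}}$, exactly as before. Build $\widehat P_j,\widehat Q_j$ and $Z_j=(\widehat P_j\ \widehat Q_j)^{\mathrm T}$ for the pairs $(x_j,y_j)$. Put $R1$ at $v,w$ and $1$ on the other diagonal positions. The rows $v,w$ against the path vertices carry the $Z_j$'s. The conclusion should again come from the $\{v,w,j\}$ principal submatrices: in any $\mathcal N$-valued completion, compressing the Schur complement by $e$ forces $h=\tfrac{u+u^*}2$ to satisfy $x_j\le h/2\le y_j$ for every pair attached to a path vertex. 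So I would attach the three pairs $(x_1,y_1),(x_2,y_2),(x_2,y_3)$ to three of the four path vertices, say $0,2,3$, which contain the non-edge $02$ and $03$. The fourth vertex $1$ would carry a duplicate, for instance $(x_1,y_1)$ again. This direction is verbatim the earlier argument, using \cite[Theorem 3.10]{AKRSx} and Proposition \ref{prop: psd completion in A''} to obtain a genuine positive completion in $M_k(\mathcal N)$.

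The new difficulty is the three specified path entries $(0,1),(1,2),(2,3)$. They must lie in $M_k(\mathcal N)$, yet be compatible with a $B(\mathcal K)$-valued positive completion. In the earlier proof the completion was $\begin{pmatrix}A_0&Z\\ Z^*&B_0\end{pmatrix}$ with $B_0=Z^*A_0^{-1}Z+\operatorname{diag}(\rho_j)$. Its off-diagonal entries $Z_j^*A_0^{-1}Z_k$ involve $b$ and are not in $\mathcal N$, so I would redesign the data to kill these path entries. I would enlarge the matrix size from $2$ to $2m$ and give each $Z_j$ its own slot. Then, although $U_0$ occupies a common $(1,1)$-corner, the parts of $A_0^{-1}$ seen by $Z_j$ and $Z_k$ can be separated, so that $Z_j^*A_0^{-1}Z_k=0$ for adjacent $j,k$. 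The specified path entries can then be set to $0\in M_k(\mathcal N)$. Positivity of $B_0-Z^*A_0^{-1}Z$ then again reduces to the individual Schur conditions $\rho_j>0$, which were already verified.

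The main obstacle is that this separation must not decouple the interpolant. All path vertices must still see the same compressed entry $u=eUe$ of $U$, so that one self-adjoint $a=h/2$ satisfies all five inequalities simultaneously. I would first try to keep the $C_j$-part, i.e.\ $P_jQ_j^*$, in the common corner, and place the remaining parts of $P_j,Q_j$ in disjoint auxiliary slots. The goal is that the cross terms $Z_j^*A_0^{-1}Z_k$ reduce to an expression in $b$ on that corner, which vanishes on the three path edges because the data factors through orthogonal slots. If exact vanishing fails, the fallback is to specify the path entries as $\delta$-small elements of $\mathcal N$ approximating the cross terms, and to absorb the error into the strict positivity margins $\rho_j$ and into the strict-inequality slack $\delta$ of the interpolation data.
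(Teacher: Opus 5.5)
There is a genuine gap, and it is exactly the step you flag as the main obstacle: you never construct the $B(\mathcal K)$-valued positive completion with $M_k(\mathcal N)$-valued path entries, and within your framework none can exist. Your contradiction uses only the principal submatrices on $\{v,w,j\}$ for $j\in\{0,2,3\}$, so it only involves the restriction of the partial matrix to the induced subgraph on $\{v,w,0,2,3\}$. That graph is $C_4$ with one vertex blown up into a $K_2$; any other three path vertices give this graph again or $2K_1+P_3=W_4$. It is C*-nuclear by Theorem~\ref{C_4 C*-nuc}, Proposition~\ref{prop: G+K_n C*-nuclear}, Lemma~\ref{lem:blow-up} and Theorem~\ref{thm:ind-min}.

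Suppose your partial matrix had a $B(\mathcal K)$-valued positive completion. Then so would its restriction, and \cite[Theorem 3.10]{AKRSx} together with Proposition~\ref{prop: psd completion in A''} would give an $M_k(\mathcal N)$-valued completion of the restriction. Its $(v,w)$-entry would produce an interpolant by your own argument. Hence no arrangement of auxiliary slots, and no choice of zero or $\delta$-small path entries, can work. In particular the cross terms $Z_j^*A_0^{-1}Z_{j+1}$ can never be made to vanish.

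You can also see the failure concretely. Your inference forces the $(e\oplus e)$-corner of $Z_jZ_j^*$ to be $\begin{pmatrix} R-D_j & C_j\\ C_j & R-D_j\end{pmatrix}$, however the columns are distributed. Positivity of this Gram matrix needs $R\ge 2y_j$ and $R\ge -2x_j$. But with a zero $(j,j+1)$ entry, positivity on $\{v,w,j,j+1\}$ is equivalent to $A_0-Z_jZ_j^*-Z_{j+1}Z_{j+1}^*\ge 0$, whose corner has diagonal $D_j+D_{j+1}-R$. Two adjacent vertices each use up almost all of the diagonal $R$ at $v$ and $w$, which would require the gaps $y_j-x_j$ to dominate the spread of the data. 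You have no control over that.

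The missing idea is to attach the three pairs to the three edges of $P_4$, i.e.\ to the maximal cliques $\{v,w,v_i,v_{i+1}\}$, so that the two endpoints of each edge share the load. With $G_i=\begin{pmatrix}-D_i & C_i\\ C_i & -D_i\end{pmatrix}$, the paper writes $RI_2+G_i=W_i+W_{i+1}$ via the telescoping choice $W_1=\tfrac{R}{2} I_2+G_1$, $W_2=\tfrac{R}{2} I_2$, $W_3=\tfrac{R}{2} I_2+G_2$, $W_4=\tfrac{R}{2} I_2+G_3-G_2$, all positive for $R$ large. It factors each $W_i$ as a Gram matrix to obtain $\widehat P_i,\widehat Q_i$, and sets every path entry equal to $0$. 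The Schur complement of the $1\oplus 1$ block on $\{v,w,v_i,v_{i+1}\}$ then has $(e\oplus e)$-corner $\begin{pmatrix} D_i & u-C_i\\ u^*-C_i & D_i\end{pmatrix}$. This gives $x_i\le a\le y_i$ for $i=1,2,3$ exactly as for $K_{2,3}$.

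On the completion side, no cross terms need to be controlled. Filling the $(v,w)$-entry with $\operatorname{diag}(2b,0)$ turns the pattern into the chordal graph $K_2+P_4$. Its three maximal clique blocks are strictly positive by the same Hadamard computation, and the operator-valued chordal completion theorem supplies the $B(\mathcal H)$-valued completion. This construction uses every edge of $P_4$, as it must, since every proper induced subgraph of $2K_1+P_4$ is C*-nuclear.
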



\begin{proof}
Denote the two nonadjacent vertices of $2K_1$ by $v,w$, with the $4$-path $P_4$ given by four vertices  $v_1-v_2-v_3-v_4$. Choose a noninjective von-Neumann algebra $\mathcal{N} \subseteq B(\mathcal{H})$. Passing to a matrix amplification if necessary, and as before, by shrinking the intervals, we may assume there exist
$$
x_1,x_2,y_1,y_2,y_3\in \mathcal{N}_{sa},
\ \ \text{and} \ \ b\in B(\mathcal{H})_{sa},
$$
such that
$$
x_1,x_2<b<y_1,y_2,y_3,
$$
while there is no $a\in \mathcal{N}_{sa}$ satisfying
$$
x_1,x_2\leq a\leq y_1,y_2,y_3.
$$
Set $x_3 := x_2$. For $i=1,2,3$, define $D_i=y_i- x_i$ and $C_i=y_i+ x_i$, and set
$$
G_i=
\begin{pmatrix}
-D_i&C_i\\
C_i&-D_i
\end{pmatrix}\in M_2(\mathcal{N})_{sa}.
$$
Choose $R>0$ sufficiently large that the four matrices
$$
W_1=\frac R2I_2+G_1,
\ \ 
W_2=\frac R2I_2,
\ \ 
W_3=\frac R2I_2+G_2,
\ \ 
W_4=\frac R2I_2+G_3-G_2
$$
are all positive. They must then satisfy $W_i+W_{i+1}=RI_2+G_i$ for $i=1,2,3$. Factor
$$
W_i=
\binom{P_i}{Q_i}
\begin{pmatrix}P_i^*&Q_i^*\end{pmatrix},
\ \ \text{with} \ \ P_i,Q_i\in M_{1,2}(\mathcal{N}).
$$
Embed these rows into $\mathcal{A}:=M_2(\mathcal{N})$ by
$$
\widehat P_i=
\begin{pmatrix}P_i\\0\end{pmatrix},
\ \ \text{and} \ \ 
\widehat Q_i=
\begin{pmatrix}Q_i\\0\end{pmatrix},
$$
where the second row is zero. Consider the partially-defined $\mathcal{A}$-valued matrix $N=(n_{i,j})$ defined by $n_{vv}=n_{ww}=R1_A, \ \ n_{v_iv_i}=1_A, \ \ n_{vv_i}=\widehat P_i, \ \ n_{wv_i}=\widehat Q_i$, for $i=1,2,3,4$, and $n_{v_iv_{i+1}}=0$ for $i=1,2,3$. All remaining unspecified entries are precisely the nonedges of $2K_1+P_4$.

Represent $M_2(\mathcal{N}) \subseteq B(\mathcal{H} \oplus \mathcal{H})$. We will now show that the partial matrix $N$ has a positive completion in $M_2(B(\mathcal{H})) \cong B(\mathcal{H} \oplus \mathcal{H})$. Fill the missing $vw$-entry by
$$
U_0=
\begin{pmatrix}
2b & 0 \\ 0 & 0
\end{pmatrix}\in B(\mathcal{H} \oplus \mathcal{H}).
$$
After this entry is filled, the specified graph is $K_2 + P_4$, which is chordal. Its maximal cliques are $\{v,w,v_i,v_{i+1}\}$ for $i=1,2,3$ . It is therefore enough, by the operator-valued chordal completion theorem, to check positivity on these three clique submatrices.

Fix $i=1,2,3$. Taking the Schur complement of the lower-right
$1_{\mathcal{A}} \oplus 1_{\mathcal{A}}$ block and using the equations preceding the lemma, the resulting $2\times2$ matrix over $M_2(\mathcal{N})$ is, after permuting the two copies of $\mathcal{H}$ inside $\mathcal{H} \oplus \mathcal{H}$, unitarily equivalent to
$$
\begin{pmatrix}
D_i&2b-C_i\\
2b-C_i&D_i
\end{pmatrix}
\oplus R1_{\mathcal{N}} \oplus R1_{\mathcal{N}}.
$$
Conjugating the first summand by the scalar Hadamard unitary gives
$$
\begin{pmatrix}
2(b-x_i)&0\\
0&2(y_i-b)
\end{pmatrix}>0.
$$
Hence every maximal clique submatrix is positive, and the chordal
completion theorem yields a positive completion.

Suppose, toward a contradiction, that $2K_1 + P_4$ is C*-nuclear. By \cite[Theorem 3.10]{AKRSx} and Proposition \ref{prop: psd completion in A''} it follows that $N$ has a $\mathcal{A}$-valued positive completion, since $\mathcal{A} = M_2(\mathcal{N}) = M_2(\mathcal{N})''$. Let $U\in M_2(\mathcal{N})$ be the $vw$-entry of this positive completion. For $i=1,2,3$, positivity of the principal submatrix on $\{v,w,v_i,v_{i+1}\}$ and the Schur complement of the two identity blocks give
\begin{equation} \label{eq:10}
\begin{pmatrix}
R1_A-\widehat P_i\widehat P_i^*
        -\widehat P_{i+1}\widehat P_{i+1}^*
&
U-\widehat P_i\widehat Q_i^*
 -\widehat P_{i+1}\widehat Q_{i+1}^*\\
U^*-\widehat Q_i\widehat P_i^*
 -\widehat Q_{i+1}\widehat P_{i+1}^*
&
R1_A-\widehat Q_i\widehat Q_i^*
        -\widehat Q_{i+1}\widehat Q_{i+1}^*
\end{pmatrix}\geq 0.
\end{equation}
Let $e=\begin{pmatrix}1_{\mathcal{N}} &0\\0&0\end{pmatrix}\in \mathcal{A}$ and $u=eUe\in \mathcal{N}$. Compressing \eqref{eq:10} by $\operatorname{diag}(e,e)$ and using the equations preceding the lemma, we get
$$
\begin{pmatrix}
D_i&u-C_i\\
u^*-C_i&D_i
\end{pmatrix}\geq 0,
\ \ \text{for} \ \ i=1,2,3.
$$
Conjugate this inequality by the flip unitary and average with it, so that if $h=\frac{u+u^*}{2}\in \mathcal{N}_{sa}$, we obtain
$$
\begin{pmatrix}
D_i&h-C_i\\h-C_i&D_i
\end{pmatrix}\geq 0.
$$
Hadamard conjugation of this inequality yields $D_i+h-C_i\geq 0$ and $D_i-h+C_i\geq 0$. Since $D_i=y_i-x_i$ and $C_i=y_i+x_i$, setting $a=\frac h2$ gives $x_i\leq a\leq y_i$ for $i=1,2,3$. Thus
$$
x_1,x_2\leq a\leq y_1,y_2,y_3,
$$
which is a contradiction.
\end{proof}

\section{Failure of C*-nuclearity for non-2-chordal graphs.} \label{sec: not 2-chordal implies not C*-nuclear}

We conclude the paper by addressing the remaining four forbidden subgraphs $C_5, \overline{C_6}, \overline{P_6}$ and $K_{2,2,2}$. We will first show that each of the graphs $C_6, \overline{C_6}$ and $\overline{P_6}$ fail to be C*-nuclear by using combinatorial arguments. Specifically, we will use the C*-nuclearity perserving operations from Section \ref{sec: Permanence} to transform these graphs into the graph $2K_1 +P _4$, which fails to be C*-nuclear by Theorem \ref{2K1+P4 not C*-nuc}. We consider the graph $K_{2,2,2}$ separately. In that case, we will give a direct proof that the corresponding operator system $\mathcal{S}_{K_{2,2,2}}$ cannot be C*-nuclear.

We begin with the graph $C_5$. Observe that since C*-nuclearity is perserved by both clique blow-ups and vertex deletions, we can combine these operations by replacing only some chosen vertices with cliques and leaving the other vertices unchanged. For a visualization of the next two proofs, see Figure \ref{fig: proof illustration}.

\begin{theorem} \label{thm: C_5 not C*-nuc}
The graph $C_5$ is not C*-nuclear.
\end{theorem}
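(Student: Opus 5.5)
The plan is to argue by contradiction, using the permanence results of Section~\ref{sec: Permanence} together with Theorem~\ref{2K1+P4 not C*-nuc}. Suppose $C_5$ were C*-nuclear. Then every graph obtained from $C_5$ by clique blow-ups of chosen vertices, vertex deletions and edge contractions would also be C*-nuclear. Here a partial blow-up is a blow-up followed by vertex deletions, so it is covered by Lemma~\ref{lem:blow-up} and Theorem~\ref{thm:ind-min}. It therefore suffices to exhibit an explicit sequence of such operations turning $C_5$ into $2K_1+P_4$, which would contradict Theorem~\ref{2K1+P4 not C*-nuc}.

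The target $2K_1+P_4$ consists of two nonadjacent vertices $v,w$ together with an induced path $v_1-v_2-v_3-v_4$ lying in their common neighbourhood. To produce it, I would first pass to a (partial) blow-up $C_5^{(m)}$, with $m=2$ or $3$, whose vertices are $i_k$ for $i\in[5]$. I would then look for three pieces of data in it:
\begin{itemize}
\item[(i)] an induced $P_4$, for instance $1_1-2_1-3_1-4_1$;
\item[(ii)] two disjoint connected vertex sets $S,T$ avoiding that path, with no edges between $S$ and $T$;
\item[(iii)] the property that each of $S$ and $T$ dominates every vertex of the path.
\end{itemize}
Contracting $S$ to $v$ and $T$ to $w$, and deleting all remaining vertices, then yields $2K_1+P_4$ as an induced minor of $C_5^{(m)}$. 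Checking that the result is exactly $2K_1+P_4$ is routine edge bookkeeping: I would confirm that the path stays induced, that $v\not\sim w$, and that both $v$ and $w$ are joined to all four path vertices.

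The main obstacle is actually finding $S$ and $T$. The difficulty comes from how adjacency works in a blow-up: distinct copies $i_k,i_l$ of the same vertex are always adjacent, and copies of $i$ and $j$ are adjacent exactly when $i,j$ are adjacent in $C_5$. Consequently the projections of $S$ and $T$ to $C_5$ must be disjoint, non-adjacent, connected, and each must dominate the projection of the path. For a path projecting onto four distinct positions this is impossible in $C_5$.

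My first attempt would therefore be to choose a path that revisits positions, for example $1_1-1_2-\cdots$, so that the domination requirement is weaker. If that fails, I would enlarge the toolkit. One option is to contract edges inside $C_5$ before blowing up, noting that $C_5/e\cong C_4$ is not useful, so the order of operations matters. Another is to invoke Proposition~\ref{prop: G+K_n C*-nuclear} and attach extra vertices, then delete part of them. I expect this combinatorial search, rather than any analytic estimate, to be the only real difficulty. The final verification is then a finite check, which I would record alongside the picture in Figure~\ref{fig: proof illustration}.
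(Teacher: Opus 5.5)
Your reduction is the same as the paper's: if $C_5$ were C*-nuclear, it would suffice to exhibit $2K_1+P_4$ as an induced minor of a partial clique blow-up of $C_5$. There is a genuine gap, however. The proposal never produces such a model, and the scheme you set up in (i)--(iii) cannot produce one.

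In your scheme the path consists of single vertices of the blow-up and only $S$, $T$ are contracted. In any (partial) blow-up, two copies $i_k,i_l$ of the same vertex are closed twins, $N[i_k]=N[i_l]$. Hence an induced path $a-b-c-d$ in the blow-up has pairwise distinct projections. If $a,b$ were copies of one vertex, then $c\sim b$ would force $c\sim a$. Copies in non-consecutive positions are adjacent, which is also forbidden. So your ``revisit positions'' attempt is impossible. Every induced $P_4$ of a blow-up projects onto four consecutive vertices of $C_5$, and there your own domination argument rules out $S$ and $T$. The fallbacks do not rescue this either. Contracting first lands in $C_4$, which is C*-nuclear, so nothing derived from it can be $2K_1+P_4$. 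Any vertex descending from a joined $K_n$ stays universal under blow-ups, deletions and contractions. Since $2K_1+P_4$ has no universal vertex, all such vertices must be deleted, so the join contributes nothing. As written, the proposal establishes only the reduction, not the theorem.

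The missing idea is to let the interior path vertices also be contracted branch sets. The $P_4$ in the minor is then not an induced $P_4$ of the blow-up. The paper blows up $1,2,3,4$ into $i^{\pm}$ and takes the following branch sets:
\begin{itemize}
\item $B_v=\{0\}$ and $B_w=\{2^+,3^+\}$;
\item $B_{v_1}=\{1^-\}$, $B_{v_2}=\{1^+,2^-\}$, $B_{v_3}=\{3^-,4^+\}$ and $B_{v_4}=\{4^-\}$.
\end{itemize}
The verification is a finite check:
\begin{itemize}
\item $0\not\sim 2,3$ gives $v\not\sim w$.
\item Each $B_{v_i}$ contains a copy of $1$ or $4$, so $0$ sees all of them.
\item $\{2^+,3^+\}$ sees $1^-,2^-,3^-,4^-$, so $w$ sees every $B_{v_i}$.
\item The path edges come from $1^-\sim 1^+$, $2^-\sim 3^-$ and $4^+\sim 4^-$.
\item The path non-edges come from $1\not\sim 3,4$ and $2\not\sim 4$ in $C_5$.
\end{itemize}
The straddling sets $B_{v_2}$ and $B_{v_3}$ are exactly what weakens the domination requirement, which is what you were looking for.
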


\begin{proof}
Label the vertices of $C_5$ by $0,1,2,3,4$ cyclically. Replace each of the vertices in $\{1,2,3,4\}$ with a 2-clique and denote the resulting vertices by $i^+$ and $i^-$ for $i=1,2,3,4$. We show that $2K_1 + P_4$ is an induced minor of the resulting graph, so that by a combination of Lemma \ref{lem:blow-up} and Theorem \ref{thm:ind-min} we will have that $C_5$ does not have a C*-nuclear graph operator system.

Let $v,w$ be the two nonadjacent vertices of the $2K_1$ part and let
$v_1-v_2-v_3-v_4$ be its induced path. Take the partition of connected sets
$$
\begin{aligned}
B_v&=\{0\},&
B_w&=\{2^+,3^+\},\\
B_{v_1}&=\{1^-\},&
B_{v_2}&=\{1^+,2^-\},\\
B_{v_3}&=\{3^-,4^+\},&
B_{v_4}&=\{4^-\}.
\end{aligned}
$$
The connected sets $B_v$ and $B_w$ are disconnected from each other because vertex $0$ is nonadjacent in $C_5$ to both $2$ and $3$. The four connected sets $B_{v_1},\ldots,B_{v_4}$ form the $4$-path, so that consecutive sets are adjacent, while each of $B_v,B_w$ is adjacent to each $B_{v_i}$. Hence the indicated clique sets form an induced minor model of $2K_1 + P_4$.
\end{proof}

\begin{theorem} \label{thm: C6bar and P6bar not C*-nuc}
    The graphs $\overline{C_6}$ and $\overline{P_6}$ are not C*-nuclear.
\end{theorem}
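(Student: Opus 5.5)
The plan is to argue exactly as in the proof of Theorem \ref{thm: C_5 not C*-nuc}. For each of $G = \overline{C_6}$ and $G = \overline{P_6}$, we replace a suitably chosen set of vertices by $2$-cliques and then exhibit one of the known non-C*-nuclear graphs as an induced minor of the result. The natural targets are $2K_1 + P_4$ (Theorem \ref{2K1+P4 not C*-nuc}) or, if it turns out to be easier, $K_{2,3}$ (Theorem \ref{thm: K_23 not C*-nuc}). Once such a model is found, Lemma \ref{lem:blow-up} together with Theorem \ref{thm:ind-min} shows that C*-nuclearity of $G$ would force C*-nuclearity of the target, which is a contradiction. Partial blow-ups, where only some vertices are blown up, are allowed: a partial blow-up is an induced subgraph of the full blow-up $G^{(2)}$, so it is covered by vertex deletion.

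To find the model I will work with complements, since they are sparse and easy to read. We have $\overline{\overline{C_6}} = C_6$ and $\overline{\overline{P_6}} = P_6$, while the complement of $2K_1+P_4$ is $K_2 \sqcup P_4$ (the edge $vw$ together with the path $v_3 - v_1 - v_4 - v_2$). An induced minor model of $2K_1+P_4$ consists of six pairwise disjoint connected branch sets $B_v, B_w, B_{v_1},\dots,B_{v_4}$ satisfying three conditions:
\begin{enumerate}
\item $B_v$ and $B_w$ have no edges between them;
\item each of $B_v, B_w$ has an edge to every $B_{v_i}$;
\item $B_{v_i}$ and $B_{v_j}$ are adjacent exactly when $|i-j|=1$.
\end{enumerate}
Condition (1) forces $B_v \cup B_w$ to be fully joined in the complement. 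In $\overline{C_6}$ this suggests taking $B_v=\{0\}$ and $B_w=\{1\}$, which are nonadjacent and whose common neighbours are $3$ and $4$. The path branch sets would then be built from $3$, $4$, the set $\{2,5\}$ (which is connected and adjacent to both $0$ and $1$), and blown-up copies. The point of blowing up vertices is that a copy $i'$ of $i$ has the same neighbourhood as $i$ plus $i$ itself. Copies can therefore glue a vertex that misses $B_v$ (or $B_w$) onto a branch set without destroying condition (3). For $\overline{P_6}$, whose vertices $0,\dots,5$ are adjacent exactly when $|i-j|\ge 2$, note that $\{0,1,2,3\}$ already induces the path $2-0-3-1$. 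I will then try to manufacture the two apices from $4$, $5$ and copies of vertices of this path.

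The main obstacle is precisely this finite search for branch sets. My first attempts show the naive choices fail: for instance, in $\overline{C_6}$ the sets $\{3\},\{4\},\{2,5\}$ give only a $P_3$, and adding copies tends to create unwanted triangles. So I expect to need blow-ups of several vertices on both sides of the graph, with each path branch set containing exactly one \emph{original} vertex that controls its non-adjacencies. If $2K_1+P_4$ proves awkward for one of the two graphs, I will fall back on $K_{2,3}$. That target only requires two nonadjacent branch sets with three pairwise nonadjacent common neighbouring branch sets. As a last resort, one can observe that $\overline{P_6}$ is obtained from $\overline{C_6}$ by adding a single edge, and try to relate the two directly through a contraction in a blow-up. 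Checking the adjacency conditions for the final model is routine once it is written down, as in the proof of Theorem \ref{thm: C_5 not C*-nuc}.
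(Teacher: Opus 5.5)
Your overall strategy is the same as the paper's: a partial $2$-clique blow-up, then an induced-minor model of $2K_1+P_4$, concluded via Lemma~\ref{lem:blow-up} and Theorem~\ref{thm:ind-min}. Your preliminary observations are all correct. The gap is that the proposal stops exactly where the proof begins. The whole content of this theorem is an explicit list of branch sets, and you never produce one; you only report that the natural first attempts fail and defer the rest to a ``finite search''. As written, nothing beyond the reduction has been proved.

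The idea you are missing for $\overline{C_6}$ is to split $\{2,5\}$ into two path branch sets instead of keeping it as one. Vertex $5$ misses the apex $0$ and vertex $2$ misses the apex $1$. Each can be repaired by gluing on a copy of one of the common neighbours $3,4$ of the apices, and the leftover copies then serve as the two ends of the path. Concretely, blow up $3$ and $4$ into $3^{\pm}$, $4^{\pm}$ and take
\[
B_v=\{0\},\quad B_w=\{1\},\quad B_{v_1}=\{3^-\},\quad B_{v_2}=\{5,3^+\},\quad B_{v_3}=\{2,4^+\},\quad B_{v_4}=\{4^-\}.
\]
This is the paper's model, and the checks are short:
\begin{itemize}
\item The sets $\{5,3^+\}$ and $\{2,4^+\}$ are connected, since $3\sim 5$ and $2\sim 4$.
\item Both apices are adjacent to $3^{\pm}$ and $4^{\pm}$, so each apex meets every path set, while $0\not\sim 1$.
\item Consecutive path sets are joined through $3^-\sim 3^+$, $5\sim 2$ and $4^+\sim 4^-$.
\item The path is induced: $3^-$ is adjacent to neither $2$ nor $4^{\pm}$, and neither $5$ nor $3^+$ is adjacent to $4^-$.
\end{itemize}

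For $\overline{P_6}$ no new search is needed. In your labelling ($i\sim j$ iff $|i-j|\ge 2$), $\overline{P_6}$ is $\overline{C_6}$ plus the single edge $\{0,5\}$. That edge runs between $B_v$ and $B_{v_2}$, which are already adjacent, so the same blow-up and branch sets give $2K_1+P_4$ verbatim. This is the paper's argument; there the extra edge is labelled $(1,2)$ and lies between $B_w$ and $B_{v_3}$.

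Your direct idea for $\overline{P_6}$ also closes, with only one blow-up. Duplicate vertex $1$ as $1'$, which is adjacent to $1,3,4,5$. Take apices $\{5\}$ and $\{4\}$, and path sets $\{2\}$, $\{0\}$, $\{3,1'\}$, $\{1\}$. The copy $1'$ gives $4$ access to the branch set containing $3$ without becoming adjacent to $2$ or $0$. The $K_{2,3}$ fallback is not needed.
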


\begin{proof}
    We first consider $\overline{C_6}$. Label the vertices of $C_6$ as $\{0,1,2,3,4,5\}$ and consider the graph complement. We will transform $\overline{C_6}$ into $2K_1 + P_4$ by a sequence of clique blowups and edge contractions. First we blow up the vertices 4 and 5 by replacing them with 2-cliques. Label the new vertices $\{ 0, 1, 2, 3^+, 3^-, 4^+, 4^-, 5\}$, and take the partition of connected sets
$$
\begin{aligned}
    B_{v}&=\{0\},&
    B_{w}&=\{1\},\\
    B_{v_1}&=\{3^-\},&
    B_{v_2}&=\{5,3^+\},\\
    B_{v_3}&=\{2,4^+\},&
    B_{v_4}&=\{4^-\}.
\end{aligned}
$$    
    The resulting subgraph formed by vertices $B_{v_1}, ... , B_{v_4}$ form the $4$-path, while the remaining vertices $B_v$ and $B_w$ are not connected to each other, but are connected to each of the other vertices. Thus the resulting graph is isomorphic to $2K_1 + P_4$. Since $2K_1 + P_4$ is obtained from $\overline{C_6}$ by operations preserving C*-nuclearity, but $2K_1 + P_4$ is not C*-nulear, we conclude that $\overline{C_6}$ is not C*-nuclear.

    Now we consider the graph $\overline{P_6}$. Again labeling the vertices $\{0,1,2,3,4,5\}$, the resulting graph is the same as $\overline{C_6}$ except it has one additional edge, say $(1,2)$. Taking the same clique blowup and the same partition of connected sets that we had for $\overline{C_6}$, we obtain the same graph $G$ whose vertices are $B_v, B_w, B_{v_1}, B_{v_2}, B_{v_3}, B_{v_4}$ as described in the previous paragraph. We conclude that $\overline{P_6}$ is not C*-nuclear.
\end{proof}

\begin{figure*}[h!]
    \centering
    \begin{subfigure}
    \centering
    {
    \begin{tikzpicture}[
      node distance=2cm,
      thick,
      main node/.style={circle, draw, fill=black, minimum size=15pt, inner sep=0pt},
      scale=1.0
    ]
    
      \node[main node, fill=red] (0) at (90:2) {0};
      \node[main node, fill=cyan] (1+) at (8:2) {1+};
      \node[main node, fill=pink] (1-) at (28:2) {1-};
      \node[main node, fill=green] (2+) at (316:2) {2+};
      \node[main node, fill=cyan] (2-) at (296:2) {2-};
      \node[main node, fill=green] (3+) at (244:2) {3+};
      \node[main node, fill=yellow] (3-) at (224:2) {3-};
      \node[main node, fill=yellow] (4+) at (172:2) {4+};
      \node[main node, fill=lightgray] (4-) at (152:2) {4-};
    
      \draw[black, thick] (1+) -- (2+);
      \draw[black, thick] (1+) -- (2-);
      \draw[black, thick] (1-) -- (2+);
      \draw[black, thick] (1-) -- (2-);
      \draw[black, thick] (1+) -- (1-);
      \draw[black, thick] (2+) -- (3+);
      \draw[black, thick] (2+) -- (3-);
      \draw[black, thick] (2-) -- (3+);
      \draw[black, thick] (2-) -- (3+);
      \draw[black, thick] (2-) -- (3-);
      \draw[black, thick] (2-) -- (2+);
      \draw[black, thick] (3+) -- (4+);
      \draw[black, thick] (3+) -- (4-);
      \draw[black, thick] (3-) -- (4+);
      \draw[black, thick] (3-) -- (4-);
      \draw[black, thick] (3-) -- (3+);
      \draw[black, thick] (4+) -- (0);
      \draw[black, thick] (4-) -- (0);
      \draw[black, thick] (4+) -- (4-);
      \draw[black, thick] (0) -- (1+);
      \draw[black, thick] (0) -- (1-);
    
      \node at (0,-2.5) {Proof of Theorem \ref{thm: C_5 not C*-nuc}};

    \end{tikzpicture}
    }
    \end{subfigure}%
    ~ 
    \begin{subfigure}
    \centering
    {
    \begin{tikzpicture}[
      node distance=2cm,
      thick,
      main node/.style={circle, draw, fill=black, minimum size=15pt, inner sep=0pt},
      scale=1.0
    ]
    
      \node[main node, fill=green] (1) at (60:2) {1};
      \node[main node, fill=cyan] (2) at (120:2) {2};
      \node[main node, fill=red] (3+) at (170:2) {3+};
      \node[main node, fill=yellow] (3-) at (190:2) {3-};
      \node[main node, fill=cyan] (4+) at (230:2) {4+};
      \node[main node, fill=pink] (4-) at (250:2) {4-};
      \node[main node, fill=red] (5) at (300:2) {5};
      \node[main node, fill=teal]
      (0) at (0:2) {0};
    
      \draw[black, thick] (3-) -- (1);
      \draw[black, thick] (3+) -- (1);
      \draw[black, thick] (1) -- (5);
      \draw[black, thick] (4+) -- (2);
      \draw[black, thick] (4-) -- (2);
      \draw[black, thick] (4+) -- (1);
      \draw[black, thick] (4-) -- (1);
      \draw[black, thick] (5) -- (2);
      \draw[black, thick] (5) -- (3+);
      \draw[black, thick] (5) -- (3-);
      \draw[black, thick] (0) -- (2);
      \draw[black, thick] (0) -- (3+);
      \draw[black, thick] (0) -- (3-);
      \draw[black, thick] (0) -- (4+);
      \draw[black, thick] (0) -- (4-);
      \draw[black, thick] (3+) -- (3-);
      \draw[black, thick] (4+) -- (4-);
      \draw[red, dotted] (1) -- (2);
    
      \node at (0,-2.5) {Proof of Theorem \ref{thm: C6bar and P6bar not C*-nuc}};

    \end{tikzpicture}
    }
    \end{subfigure}%
    \caption{Illustration of proofs} \label{fig: proof illustration}
\end{figure*}

It only remains to show that the graph $K_{2,2,2}$ is not C*-nuclear. Rather than solving a matrix completion problem or applying combinatorial techniques, we will show directly that the associated operator system is not C*-nuclear leveraging Kavruk's Namioka-Phelps test space from \cite{KavrukNP}.

We will show that the operator system $\mathcal{T} = \mathcal{S}_{K_{2,2,2}}$ is not C*-nuclear. It suffices to find a retract of $\mathcal{T}$ which is not C*-nuclear, and our candidate will be the operator system $\mathcal{W}_{2,3}$ introduced by Kavruk in \cite{KavrukNP}. This system is given by
\[ \mathcal{W}_{2,3} = \{ \text{diag}(a,b,c,x,y,z) \in M_6 : a+b+c = x+y+z \}. \]
In \cite[Theorem 0.3]{KavrukNP} Kavruk and shows that $\mathcal{W}_{2,3}$ is not C*-nuclear. 

\begin{theorem} \label{thm: K222 not C*-nuc}
    The operator system $\mathcal{W}_{2,3}$ is a retract of $\mathcal{S}_{K_{2,2,2}}$. Consequently $K_{2,2,2}$ is not C*-nuclear. 
\end{theorem}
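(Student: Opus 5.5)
The plan is to write down two explicit UCP maps $\psi:\mathcal{S}_{K_{2,2,2}}\to\mathcal{W}_{2,3}$ and $\varphi:\mathcal{W}_{2,3}\to\mathcal{S}_{K_{2,2,2}}$ with $\psi\circ\varphi=\mathrm{id}_{\mathcal{W}_{2,3}}$. Once $\mathcal{W}_{2,3}$ is exhibited as a retract, non-C*-nuclearity of $K_{2,2,2}$ follows from Proposition \ref{prop: retracts} together with Kavruk's \cite[Theorem 0.3]{KavrukNP}.

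Label the vertices of $K_{2,2,2}$ so that the three parts are $\{1,2\},\{3,4\},\{5,6\}$. Then $\mathcal{S}_{K_{2,2,2}}$ consists of all $T\in M_6$ with $T_{12}=T_{21}=T_{34}=T_{43}=T_{56}=T_{65}=0$. For $k=1,2,3$ put
\[
u_k^{\pm}=\tfrac{1}{\sqrt2}\bigl(e_{2k-1}\pm e_{2k}\bigr).
\]
\begin{itemize}
\item \emph{Definition of $\psi$.} Set
\[
\psi(T)=\mathrm{diag}\bigl(\langle Tu_1^+,u_1^+\rangle,\langle Tu_2^+,u_2^+\rangle,\langle Tu_3^+,u_3^+\rangle,\langle Tu_1^-,u_1^-\rangle,\langle Tu_2^-,u_2^-\rangle,\langle Tu_3^-,u_3^-\rangle\bigr).
\]
\item \emph{$\psi$ lands in $\mathcal{W}_{2,3}$.} For each $k$,
\[
\langle Tu_k^+,u_k^+\rangle-\langle Tu_k^-,u_k^-\rangle=T_{2k-1,2k}+T_{2k,2k-1}=0
\]
on $\mathcal{S}_{K_{2,2,2}}$. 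Hence the two triples of vector states have equal sums on $\mathcal{S}_{K_{2,2,2}}$, and in fact agree termwise.
\item \emph{$\psi$ is UCP.} Each coordinate is a state, and a unital map into the commutative system $\ell^\infty_6$ whose coordinates are states is UCP.
\end{itemize}
Since $\psi$ lands in the subsystem $\mathcal{W}_{2,3}\subseteq\ell^\infty_6$, it is UCP into $\mathcal{W}_{2,3}$ as well.

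For $\varphi$, the naive choice
\[
\varphi_0(\mathrm{diag}(a,b,c,x,y,z))=\sum_k a_kP_{u_k^+}+x_kP_{u_k^-}
\]
fails. It satisfies $\psi\circ\varphi_0=\mathrm{id}$ and is manifestly CP, but its $(2k-1,2k)$ entry is $(a_k-x_k)/2$, so it does not land in $\mathcal{S}_{K_{2,2,2}}$. The remedy is to exploit the defining constraint of $\mathcal{W}_{2,3}$. Linear maps on $\mathcal{W}_{2,3}$ are exactly linear maps on $\mathbb{C}^6$ modulo adding $(a+b+c-x-y-z)\,K$ for a fixed self-adjoint matrix $K$.

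I would therefore look for $\varphi$ of the form
\[
\varphi(w)=\sum_k \bigl(a_kA_k+x_kB_k\bigr),
\]
with $A_k,B_k\in M_6$ self-adjoint, subject to the following requirements.
\begin{itemize}
\item \emph{Cancellation.} The within-pair entries of $\varphi(w)$ cancel whenever $\sum a_k=\sum x_k$. Concretely, $(A_k)_{2j-1,2j}=\alpha_j$ and $(B_k)_{2j-1,2j}=-\alpha_j$, with the same $\alpha_j$ for all $k$.
\item \emph{Left inverse.} $\langle A_ku_j^\pm,u_j^\pm\rangle$ and $\langle B_ku_j^\pm,u_j^\pm\rangle$ reproduce the identity under $\psi$.
\item \emph{Unitality.} $\sum_k A_k=\sum_k B_k=I_6$ on the unit. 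Since the unit corresponds to $a_k=x_k=1$, this just needs $\tfrac12\sum_k(A_k+B_k)=I$ modulo the cancellation freedom.
\end{itemize}
A natural ansatz is to perturb $P_{u_k^\pm}$ by cross-pair rank-one pieces. For instance, let $A_k$ be the projection onto a unit vector $\tfrac{1}{\sqrt2}(e_{2k-1}+e_{2k})$ twisted with components in the other two pairs. The cross-pair entries are free, since they are edges of $K_{2,2,2}$, and the within-pair entries are arranged to match the cancellation condition above.

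The main obstacle is positivity, and in fact complete positivity, of $\varphi$ on $\mathcal{W}_{2,3}$. The domain is not a C*-algebra, and positive maps out of it need not be CP. I would use the concrete description
\[
M_n(\mathcal{W}_{2,3})^+=\Bigl\{(P_1,P_2,P_3,Q_1,Q_2,Q_3): P_i,Q_j\in M_n^+,\ \textstyle\sum P_i=\sum Q_j\Bigr\}.
\]
Complete positivity then reduces to showing
\[
\sum_k\bigl(A_k\otimes P_k+B_k\otimes Q_k\bigr)\geq 0 \quad\text{whenever}\quad \sum P_k=\sum Q_k.
\]
At the scalar level the extreme rays are $e_i\oplus e_j$, nine of them, so positivity needs $A_i+B_j\geq0$ for all $i,j$. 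This is the guiding constraint for choosing the cross-pair entries. For the matrix level I would try to write $A_i\otimes P_i+B_j\otimes Q_j$ sums as a sum of squares using the equality $\sum P=\sum Q$, mimicking Kavruk's description of $\mathcal{W}_{2,3}$ as a quotient or coproduct of $\ell^\infty_3$'s. Failing a clean choice, I would instead build $\varphi$ by composing through a known presentation of $\mathcal{W}_{2,3}$, e.g.\ as the operator system of the $3\times3$ ``bipartite'' relations, where CP-ness is automatic.
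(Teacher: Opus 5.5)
There is a genuine gap, and it is already in your choice of $\psi$. Your own computation shows that on $\mathcal{S}_{K_{2,2,2}}$ one has $\langle Tu_k^+,u_k^+\rangle=\langle Tu_k^-,u_k^-\rangle$ for every $k$. So the range of $\psi$ is the three-dimensional subsystem $\{\mathrm{diag}(a,b,c,a,b,c)\}$, while $\mathcal{W}_{2,3}$ is five-dimensional. A retraction must be surjective, because $\psi\circ\varphi=\mathrm{id}$. Hence no $\varphi$ whatsoever works with this $\psi$: for example, $w=\mathrm{diag}(2,0,0,0,1,1)\in\mathcal{W}_{2,3}$ can never equal $\psi(\varphi(w))$.

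The same contradiction sits inside your list of requirements. If $\varphi(w)$ has vanishing within-pair entries (your cancellation condition, needed to land in $\mathcal{S}_{K_{2,2,2}}$), then $\psi(\varphi(w))$ automatically has $a_k=x_k$, so the left-inverse condition cannot hold. Independently of this, you never actually produce $\varphi$, and its complete positivity is left open. So even the intended structure of the argument is incomplete.

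The missing idea is that the three within-part constraints must collapse into a \emph{single} linear relation on the diagonal. That forces you to use vectors spread over all three parts, not vectors localized in each part. The paper's argument runs as follows:
\begin{itemize}
\item Relabel so the parts are $\{i,i+3\}$. Then $\mathcal{S}_{K_{2,2,2}}$ is the set of $6\times6$ matrices vanishing at the positions with $|i-j|=3$.
\item Average over conjugation by the cyclic shift $P$. This UCP idempotent maps $\mathcal{S}_{K_{2,2,2}}$ onto $\mathrm{span}\{I,P,P^2,P^4,P^5\}$, which lies inside $\mathcal{S}_{K_{2,2,2}}$.
\item The discrete Fourier transform diagonalizes this span to $\{\mathrm{diag}(d_0,\dots,d_5): d_0+d_2+d_4=d_1+d_3+d_5\}\cong\mathcal{W}_{2,3}$.
\end{itemize}
In your language, let $f_m$ be the Fourier basis vectors with $(f_m)_i=\omega^{mi}/\sqrt6$, where $\omega=e^{i\pi/3}$. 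Take $\psi(T)=\mathrm{diag}(\langle Tf_m,f_m\rangle)_{m}$. Its alternating sum $\sum_m(-1)^m\langle Tf_m,f_m\rangle$ equals $\sum_{|i-j|=3}T_{ij}$, which is $0$ on $\mathcal{S}_{K_{2,2,2}}$. Take $\varphi(d)=\sum_m d_m f_mf_m^*$, which is exactly your naive $\varphi_0$ written in the correct basis. It lands in $\mathcal{S}_{K_{2,2,2}}$ because its entries at $|i-j|=3$ are $\frac16\sum_m(-1)^m d_m=0$. Since $\varphi$ is a unitary conjugation restricted to a subsystem of the diagonal algebra, it is a complete order embedding. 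So the matrix-level positivity you identified as the main obstacle never arises. Your final step, deducing non-C*-nuclearity from Proposition \ref{prop: retracts} and Kavruk's theorem, is fine once the retraction exists.
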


\begin{proof}
    By a suitable labeling of the vertices of $K_{2,2,2}$, we may identify the operator system $\mathcal{T} :=\mathcal{S}_{K_{2,2,2}}$ with the operator system \[ \{ (A_{i,j})_{i,j=1}^6 \in M_6 : A_{i,j} = 0 \text{ whenever } |i-j|=3 \ \}. \]
    Let $P \in M_6$ denote the canonical shift operator on $\mathbb{C}^6$ (i.e. $P e_i = e_{i+1}$ for $i=1,2,3,4,5$ and $P e_6 = e_1$). This matrix is a unitary permutation matrix. Define $\rho(A) = \frac{1}{6} \sum_{n=0}^5 P^{-n} A P^n$. Direct calculation shows that the resulting matrix is the circulant $6 \times 6$ matrix
    \[ \rho(A) = c_0 I + c_1 P + c_2 P^2 + \dots + c_5 P^5 \] where $c_i$ is the average of the entries of $A$ on the $i$-th superdiagonal and the $(6-i)$-th subdiagonal (with $c_0$ being the average of the diagonal entries). Since the entries on the third superdiagonal and third subdiagonal of the elements of $\mathcal{T}$ are all zero, \[ \rho(T) = \{ c_0 I + c_1 P + c_2 P^2 + c_4 P^4 + c_5 P^5 : c_0,c_1, c_2, c_4,c_5 \in \mathbb{C} \}.\] Clearly $\rho(T)$ is a retract of $\mathcal{S}$, since it is a subsystem of $\mathcal{S}$ and since $\rho$ is UCP. 
    
    We conclude by showing that $\rho(T)$ is completely order isomorphic to $\mathcal{W}_{2,3}$. Let $F=(F_{i,j})$ denote the normalized discrete Fourier transform operator on $\mathbb{C}^6$, i.e. $F_{i,j}= \frac{1}{\sqrt{6}} \omega^{(6-i+1)(j-1)}$ where $\omega = e^{i\pi/3}$. Then \[ F^* P F = \text{diag}(1, \omega, \omega^2, \omega^3, \omega^4, \omega^5) \] and hence $F^* P^n F = \text{diag}(1, \omega^n, \omega^{2n}, \omega^{3n}, \omega^{4n}, \omega^{5n})$ for each $n \in \mathbb{N}$. It is easily verified that \[ \omega^0 + \omega^{2n} + \omega^{4n} = \omega^{n} + \omega^{3n} + \omega^{5n} \] for each $n \in \{0,1,2,4,5\}$. Since the set of matrices $\{F^*P^nF: n=0,1,2,4,5\}$ is linearly independent and since the vector space of $6 \times 6$ diagonal matrices is six-dimensional, it follows that \[ \text{span} \{F^* P^n F : n=0,1,2,4,5\} = \{ \text{diag}(a,x,b,y,c,z) : a+b+c = x+y+z\}. \] This operator system is obviously completely order isomorphic to $\mathcal{W}_{2,3}$. It follows that $\mathcal{W}_{2,3}$ is completely order isomorphic (in fact, unitarily equivalent) to the operator system $\rho(\mathcal{T})$. Since $\mathcal{W}_{2,3}$ is not C*-nuclear, neither is $\rho(T)$. By Proposition \ref{prop: retracts}, we conclude that $\mathcal{T}$ is not C*-nuclear, so the graph $K_{2,2,2}$ is not C*-nuclear.
\end{proof}

We have now shown that all of the forbidden subgraphs of Theorem \ref{thm: McCullough} fail to be C*-nuclear. This proves the other direction of our main result.

\begin{theorem} \label{thm: not 2-chordal implies not C*-nuclear}
    A graph $G$ is 2-chordal if and only if the operator system $\cc S_G$ is C*-nuclear.
\end{theorem}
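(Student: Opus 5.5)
The statement splits into two implications, and each one is assembled from results already established.

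\emph{2-chordal implies C*-nuclear.} This is exactly Theorem \ref{thm: 2-chordal implies C*-nuclear}, so nothing new is needed.

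\emph{C*-nuclear implies 2-chordal.} I will argue by contraposition. Suppose $G$ is not 2-chordal. By part 4 of Theorem \ref{thm: McCullough}, $G$ then contains an induced subgraph $H \in \mathcal{F}$. The family $\mathcal{F}$ consists of $\overline{C_6}$, $\overline{P_6}$, $C_n$ for $n\ge 5$, $K_{2,3}$, $K_{2,2,2}$ and $2K_1+P_4$.

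I will show that every $H\in\mathcal{F}$ is not C*-nuclear, case by case:
\begin{itemize}
\item $K_{2,3}$ is handled by Theorem \ref{thm: K_23 not C*-nuc}.
\item $2K_1+P_4$ is handled by Theorem \ref{2K1+P4 not C*-nuc}.
\item $\overline{C_6}$ and $\overline{P_6}$ are handled by Theorem \ref{thm: C6bar and P6bar not C*-nuc}.
\item $K_{2,2,2}$ is handled by Theorem \ref{thm: K222 not C*-nuc}.
\item $C_5$ is handled by Theorem \ref{thm: C_5 not C*-nuc}.
\end{itemize}

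The one case not literally covered is $C_n$ for $n\ge 6$, and this is the only point needing care. Contracting $n-5$ consecutive edges of $C_n$ yields $C_5$, so $C_5$ is an induced minor of $C_n$. By Theorem \ref{thm:ind-min}, if $C_n$ were C*-nuclear then $C_5$ would be as well, contradicting Theorem \ref{thm: C_5 not C*-nuc}. Hence no $C_n$ with $n\ge 5$ is C*-nuclear.

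Now suppose, for contradiction, that $\mathcal{S}_G$ were C*-nuclear. Since $H$ is an induced subgraph of $G$, it is obtained by vertex deletions, so $H$ is an induced minor of $G$. Theorem \ref{thm:ind-min} then makes $\mathcal{S}_H$ C*-nuclear, contradicting the case analysis above. Therefore $\mathcal{S}_G$ is not C*-nuclear, which completes the equivalence.
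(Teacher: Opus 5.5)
Your proposal is correct and is essentially the paper's proof: one direction is the already established 2-chordal $\Rightarrow$ C*-nuclear theorem, and for the converse an induced forbidden subgraph of a C*-nuclear $G$ would itself be C*-nuclear by Theorem~\ref{thm:ind-min}, contradicting the case-by-case failures proved in the last two sections. Your extra step for $C_n$ with $n\ge 6$ (contracting $n-5$ edges down to $C_5$ and invoking Theorem~\ref{thm:ind-min} again) is valid and covers a case the paper's proof asserts without argument, since only $C_5$ is treated explicitly there and $C_5$ is not an induced subgraph of $C_n$.
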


\begin{proof}
    By Theorem \ref{thm: 2-chordal implies C*-nuclear}, it remains to show that if $G$ is not 2-chordal, then $\mathcal{S}_G$ is not C*-nuclear.

    Suppose that $G$ is not 2-chordal and, for the sake of contradiction, suppose that $G$ is C*-nuclear. Then every induced subgraph of $G$ is also C*-nuclear. By Theorem \ref{thm: McCullough}, $G$ contains one of the forbidden subgraphs depicted in Figure \ref{Fig: forbidden subgraphs}. Since induced subgraphs are induced minors, it follows from Theorem \ref{thm:ind-min} that one of these forbidden subgraphs is C*-nuclear. However, we have just shown above that each of the forbidden subgraphs fails to be C*-nuclear, so we have reached a contradiction. We conclude that $G$ is not C*-nuclear.
\end{proof}

\section*{Funding statement}
This work was supported by the American Institute of Mathematics through the AIM SQuaRE workshop ``Approximation properties for operator systems and matrix convex sets.'' A. Dor-On and T. Russell were partially supported by the BSF startup grant no. 202416. A. Dor-On was partially supported by an NSF-BSF grant no. 2350543 / 2023695 (respectively). T. Russell was also supported by a grant from the TCU Research and Creative Activities Fund.

\section*{Acknowledgements}
We thank Vern Paulsen for sharing open problems concerning graph operator systems with the community and for helpful conversations at the Banff International Research Station workshop ``Operator Systems and their Applications'' and at the Mittag-Leffler Institute program ``Operator algebras and quantum information theory.'' We thank Roy Araiza, Matt Kennedy, Larissa Kroell and Thomas Sinclair for preliminary discussions concerning these problems.

\bibliographystyle{alpha}

\begin{thebibliography}{}

\bibitem{AHMR88} J. Agler, J. W. Helton, S. McCullough, and L. Rodman, \emph{Positive semi-definite
matrices with a given sparsity pattern}, Linear Algebra and its Applications, {\bf 107}
(1988), 101–-149.

\bibitem{AKRSx} R. Araiza, L. Kroell, T. Russell and T. Sinclair \emph{Approximate factorization properties for operator systems} Preprint, arXiv:2607.00177.

\bibitem{Arv}   W. B. Arveson, {\em Subalgebras of C$^{*}$-algebras,}  Acta Math.\  {\bf 123} (1969), 141--224.  

\bibitem{CEinj} M.-D. Choi and E. G. Effros,  {\em Injectivity and operator spaces,} J.\ Funct.\ Anal.\  {\bf 24} (1977), 156--209.

\bibitem{Dirac61} G. A. Dirac, \emph{On rigid circuit graphs}, Abhandlungen aus dem Mathematischen Seminar der Universit¨at Hamburg, {\bf 25} (1961), 71–-76.

\bibitem{FP} D. Farenick and  V. I. Paulsen, {\em Operator system quotients of matrix algebras and their tensor products}, Math.\ Scand.\ {\bf 111} (2012), 210--243.

\bibitem{DymGohberg81} H. Dym and I. Gohberg, \emph{Extension of band matrices with band inverses}, Linear Algebra App. {\bf 36} (1981), l--24. 

\bibitem{Grone84} R. Grone, C. R. Johnson, E. M. S\'a, and H. Wolkowicz, {\em Positive definite completions of partial hermitian matrices}, Linear Algebra and its Applications, { \bf 58} (1984),
109–-124.

\bibitem{GuptaEtAl} Ved Prakash Gupta, Prabha Mandayam, V. S. Sunder, \emph{The Functional Analysis of Quantum Information Theory}, arXiv 1410.7188.

\bibitem{HP} K. H. Han  and  V. I. Paulsen, {\em  An approximation theorem for nuclear operator systems}, Journal of Functional Analysis, {\bf 261}  (2011), 999--1009.

\bibitem{Harris} S. J. Harris, \emph{A Non-Commutative Unitary Analogue of Kirchberg's Conjecture}, Indiana University Mathematics Journal, {\bf 68(2)},(2019), 503–536. 

\bibitem{Kavruk} A. Kavruk, {\em Nuclearity related properties in operator systems}, Journal of Operator Theory, {\bf 71} (2014), 95--156. 

\bibitem{Kavrukw} A. Kavruk,  {\em The weak expectation property and Riesz interpolation}, Preprint (2012), arXiv:1201.5414

\bibitem{KavrukNP} A. Kavruk,  {\em On a non-commutative analogue of a classical result of Namioka and Phelps,} Journal of Functional Analysis, {\bf 269} no~10 (2015), 3282--3303.

\bibitem{KPTT1} A. Kavruk, V. I. Paulsen, I. G. Todorov and M. Tomforde, {\em Tensor products of operator systems}, Journal of Functional Analysis, {\bf 261} (2011), 267--299. 

\bibitem{KPTT2} A. Kavruk, V. I. Paulsen, I. G. Todorov and M. Tomforde, {\em Quotients, exactness, and nuclearity in the operator system category}, Advances in Mathematics, {\bf 235} (2013), 121--360.

\bibitem{Laurent01} M. Laurent, {\em On the sparsity order of a graph and its deficiency in chordality}, Combinatorica {\bf 21} no. 4 (2001), 543--570. 

\bibitem{McCullough88} S. McCullough, \emph{2-chordal graphs}, in: I. Gohberg, J. W. Helton and L. Rodman, eds., Contributions to Operator Theory and its Applications, Vol. 35 in Series \emph{Operator Theory: Advances and Applications}, pages 143–192, Birkhauser-Verlag, Basel, 1988.

\bibitem{McCullough93} S. McCullough, \emph{Minimal separators of 2-chordal graphs}, Linear Algebra and its Applications, 184 (1993), 187–199.

\bibitem{Oz} N. Ozawa, {\em About the Connes embedding conjecture--Algebraic approaches}, Jpn. J. Math. {\bf 8} (2013),  147–-183. 

\bibitem{Pnbook} V. I. Paulsen, {\em Completely bounded maps and operator
algebras,} Cambridge Studies in Advanced Math., 78, Cambridge
University Press, Cambridge, 2002.

\bibitem{PPS89} V.I. Paulsen, S. Power, R. Smith, \emph{Schur products and matrix completions}, Journal of Functional Analysis {\bf 85} (1989), 151--178.

\bibitem{PaulsenRodman} V. I. Paulsen and L. Rodman, {\em Positive completions of matrices over C*-algebras}, Journal of Operator Theory {\bf 25} no.~2 (1991), 237--253.

\end{thebibliography}

\end{document}